\def\ub{\underline{u}}
\def\Lb{\underline{L}}
\def\Cb{\underline{C}} \def\Eb{\underline{E}}
\def\Fb{\underline{F}}
\def\gslash{\mbox{$g \mkern -8mu /$ \!}}
\def\doubleint{\int\!\!\!\!\!\int}
\def\nablaslash{\mbox{$\nabla \mkern -13mu /$ \!}}
\def\laplacianslash{\mbox{$\triangle \mkern -13mu /$ \!}}
\newtheorem*{Main Theorem 1}{Main Theorem 1}
\newtheorem*{Main Theorem 2}{Main Theorem 2}
\newtheorem*{Main Theorem 3}{Main Theorem 3}
\newtheorem*{Main A priori Estimates}{Main A priori Estimates}
\newtheorem*{Existence Theorem for Finite Region}{Existence Theorem for Finite Region}
\newtheorem*{Existence Theorem from Past Null Infinity}{Existence Theorem from Past Null Infinity}
\newtheorem{theorem}{Theorem}[section]
\newtheorem{lemma}[theorem]{Lemma}
\newtheorem{proposition}[theorem]{Proposition}
\newtheorem{remark}[theorem]{Remark}
\numberwithin{equation}{section}
\begin{document}
\title[Large data regime for NLW]{A Large Data Regime for non-linear Wave Equations}

\author[Jinhua Wang]{Jinhua Wang}
\address{Center of Mathematical Sciences, Zhejiang University\\ Hangzhou, China}
\email{youcky0208@gmail.com}

\author[Pin Yu]{Pin Yu}
\address{Mathematical Sciences Center, Tsinghua University\\ Beijing, China}
\email{pin@math.tsinghua.edu.cn}

\thanks{JW is deeply indebted to Professors Dexing Kong and Kefeng Liu for the encouragement and guidance. She would like to thank the Mathematical Sciences Center of Tsinghua
University where the work was partially done during her visit.\\
\indent PY is supported by NSF-China Grant 11101235. He would like to thank Prof. Sergiu Klainerman for the communication of the ideas on the relaxation of the propagation estimates.}

\begin{abstract}
For semi-linear wave equations with null form non-linearities on $\mathbb{R}^{3+1}$, we exhibit an open set of initial data which are allowed to be large in energy spaces, yet we can still obtain global solutions in the future.

We also exhibit a set of localized data for which the corresponding solutions are strongly focused, which in geometric terms means that a wave travels along an specific incoming null geodesic in such a way that almost all of the energy is confined in a tubular neighborhood of the geodesic and almost no energy radiating out of this tubular neighborhood.
\end{abstract}
\maketitle

\section{Introduction}\label{introduction}

In this paper, we study the Cauchy problem for the following semi-linear wave equation on $\mathbb{R}^{3+1}$,
 \begin{equation}\label{Main Equation}
 \Box \varphi =Q(\nabla \varphi, \nabla \varphi),
 \end{equation}
where $Q$ is a null form (see Section \ref{section null form} for definitions) and $\varphi: \mathbb{R}^{3+1} \rightarrow \mathbb{R}$ is a scalar function. The data that we will consider for \eqref{Main Equation} will be some specific large data. In fact, the size of the data is measured by a large parameter $\delta^{-1}$ (where $\delta$ is sufficiently small) in energy spaces. We remark that the results of current work can be easily extended to higher dimensions and to a system of equations with null form nonlinearities in the obvious way.

\subsection{Earlier Results}
We briefly summarize the progress on small data theory for nonlinear
waves related to equations of type \eqref{Main Equation}. Based on
the decay mechanism of the linear waves, we know very well about the
Cauchy problems for \eqref{Main Equation} on Minkowski space-times
$\mathbb{R}^{n+1}$, especially for those with small initial data. In
dimensions four and higher, since the linear wave decay fast enough
(at least at the rate $t^{-\frac{3}{2}}$), the
small-data-global-existence type theorems for \eqref{Main Equation}
hold for generic quadratic nonlinearities (which are not assumed to
be \emph{null}), see the work of Klainerman \cite{K-85}. However, in
$\mathbb{R}^{3+1}$, the linear waves decay slower and the quadratic
nonlinearities control the dynamics of the system. In fact, there
are quadratic forms $Q$ for which a finite time blow-up phenomenon
occurs even for arbitrarily small data. This has been shown by F.
John in \cite{J-79}.

The main breakthrough in understanding the small-data-global-existence result of \eqref{Main Equation} was made by Klainerman \cite{K-84} by introducing the \emph{null condition} on the nonlinearities. Under this condition, Klainerman and Christodoulou \cite{Ch-86} independently proved that small initial data lead to global in time classical solutions. Their proofs are different in nature. Klainerman's approach makes use of the full conformal symmetries of $\mathbb{R}^{3+1}$ through the vector fields, while Christodoulou's idea is to use the conformal compactification of $\mathbb{R}^{3+1}$. Nevertheless, their proofs rely essentially on the special cancelations of the null form nonlinearities, which are absent for generic quadratic nonlinearities.

The cancelations of null forms has far-reaching implications for other types of hyperbolic equations. Although many hyperbolic equations do not in general have a null quadratic form type nonlinearity, the estimates for the nonlinear terms follow more or less the similar philosophy: if one term behaves badly (i.e., large in some suitable norms in most cases) in the nonlinearities, it must be coupled with the a good (i.e., with a much better or smaller estimates) term. Thus, we hope that the good terms are strong enough to absorb the large contributions from the bad terms. One major application of this idea in general relativity appears in Christodoulou-Klainerman's proof of non-linear stability of Minkowski space-time \cite{Ch-K}. They observed that a bad (decay worse) component of Weyl curvature is always coupled to either a good connection coefficient or a good curvature component, thus in most cases the bad components do not really affect the long time behavior of the gravitational waves.\\

Although all of the aforementioned results require the initial data be sufficiently small, the idea of using cancelations from null forms still can be used to handle certain large data problems. We shall briefly describe two very recent works on the dynamics of vacuum Einstein field equations in general relativity.

In his seminal work \cite{Ch-08}, Christodoulou discovered a remarkable mechanism responsible for the dynamical formation of black holes.  For some carefully chosen initial data (which
give an open set of the Sobolev space on a outgoing null hypersurface), called \emph{short pulse} data in \cite{Ch-08}, he proved that a black hole (more precisely, a trapped surface)
can form along the evolution due to the focusing of the gravitational waves. Besides its significance in physics, this result is truly remarkable from a PDE perspective, because the
result is for large data (roughly speaking, small data for Einstein equations in general relativity would lead to a space-time close to Minkowski space-time. So for small data, we do not
expect black holes). One of the key observations used repeatedly in \cite{Ch-08} is still related to the philosophy of null forms: we do have many bad (large) components in the estimates,
but all of them must come with good (small) components to make the estimates work.

In \cite{K-R-09}, Klainerman and Rodnianski extended and significantly simplified Christodoulou's work. A key ingredient in their paper is the relaxed propagation estimates, namely, if one enlarges the admissible set of initial conditions, the corresponding propagation estimates are much easier to derive. They reduced the number of derivatives needed in the estimates from two derivatives on curvature (in Christodoulou's proof) to just one. We should note that the direct consequence of the simpler proof of Klainerman-Rodnianski yields results weaker than those obtained by Christodoulou. In fact, within those more general initial data set, they can only show long time existence results for vacuum Einstein field equations; nevertheless, once such existence results are obtained, one can improve them by assuming more on the data, say, consistent with Christodoulou's assumptions and then one can derive Christodoulou's results in a straightforward manner.\\

The results of this paper are strongly motivated by \cite{Ch-08} and the proofs are very much inspired by \cite{K-R-09}. In particular, the choice of initial data will be analogous to the short pulse ansatz in \cite{Ch-08}; the proof will rely on a relaxed version of energy estimates similar to the relaxation of the propagation estimates in \cite{K-R-09}. We also have to mention another work \cite{K-R-10} of Klainerman and Rodnianski where they managed to localize the data for Einstein equations to show the dynamical formation of locally trapped surfaces. One of our main results here concerning the strongly focused waves is motivated by this work. Roughly speaking, it asserts that if the wave initially concentrates around a given point in a specific way, then it will be confined in a tubular neighborhood of an incoming null geodesic and there is only a negligible amount of energy dispersing out of this neighborhood. It is precisely in this sense that we say the wave is strongly focused. It seems to the authors that this result is new even for linear wave equations.

\subsection{Main Results}
We first state the main theorems of the paper:
\begin{Main Theorem 1}
For any given positive number $E_0 >0$, there exists a smooth initial data set $(\varphi^{(0)}, \varphi^{(1)})$ for
 \begin{equation}\label{Main Equation with Cauchy data}
 \begin{split}
 \Box \varphi &=Q(\nabla \varphi, \nabla \varphi),\\
 (\varphi, \partial_t \varphi)|_{t=0} &= (\varphi^{(0)}, \varphi^{(1)}),
 \end{split}
 \end{equation}
where $Q$ is a null form, so that the energy
\begin{equation*}
\text{Energy}_{(1)}(\varphi^{(0)}, \varphi^{(1)}) = \frac{1}{2} \int_{\mathbb{R}^3} |\nabla\varphi^{(0)}|^2 + |\varphi^{(1)}|^2 d x_1 d x_2 d x_3 \geq E_0,
\end{equation*}
and this data set leads to a classical smooth solution of the above equation. Moreover, the future life-span of the solution is $[0,\infty)$.
\end{Main Theorem 1}
\begin{remark}
Moreover, if for $k\in \mathbb{N}$ we define the $k$-th order energy of the data as
\begin{equation*}
\text{Energy}_{(k)}(\varphi^{(0)}, \varphi^{(1)}) = \frac{1}{2} \int_{\mathbb{R}^2} |\nabla^{k}\varphi^{(0)}|^2 + |\nabla^{k-1}\varphi^{(1)}|^2 d x_1 d x_2 d x_3,
\end{equation*}
we can show that
\begin{equation*}
\text{Energy}_{(k)}(\varphi^{(0)}, \varphi^{(1)}) \geq \delta^{-(k-1)},
\end{equation*}
where $\delta$ is a small positive parameter. We note in passing that the higher order energies can be extremely large.
\end{remark}

In the course of proving the above theorem, we will derive two other results which are of independent interest. To facilitate the statement of these two results, we introduce a bit of notation.\\

We review some geometric constructions on Minkowski space $\mathbb{R}^{3+1}$. Besides the standard coordinates $(t, x_1,x_2,x_3)$, we shall mainly use the null-polar coordinates $(u,\ub,\theta)$. We recall the definition for null-polar coordinates. Let
\begin{equation*}
r = \sqrt{x_1^2 + x_2^2 +x_3^2}
\end{equation*}
be the spatial radius function. Two optical function $u$ and $\ub$ are defined by
\begin{equation*}
u = \frac{1}{2}(t-r) \quad \text{and} \quad \ub = \frac{1}{2}(t+r).
\end{equation*}
The angular argument $\theta$ denotes a point on the unit sphere $\mathbb{S}^2 \subset \mathbb{R}^3$.

The past null infinity $\mathcal{I}\,^-$ of $\mathbb{R}^{3+1}$ can be represented by the collection of past-pointing outgoing null lines. Therefore, $\mathcal{I}\,^-$ is parameterized by $(\ub, \theta) \in \mathbb{R} \times \mathbb{S}^2$. We also use $C_{c}$ to denote the level surface $u=c$, where c is a constant; similarly, $\Cb_{\ub}$ denotes a level set of $\ub$. Their intersection $C_u\cap \Cb_{\ub}$ will be a two sphere denoted by $S_{\ub,u}$.

We illustrate these definitions in the following pictures:

\includegraphics[width=5in]{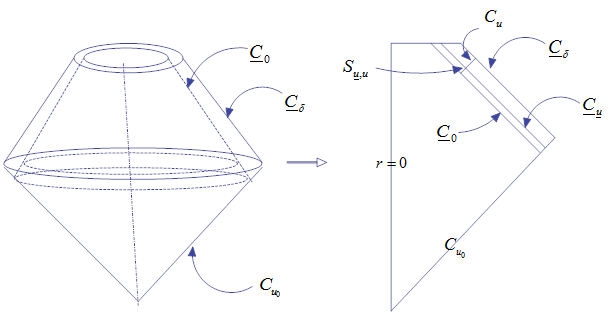}

We remark that, in Section \ref{Section Initial Data in Short Pulse Regime} and Section \ref{Section An A priori Estimates up to Third Derivatives}, which are the technical heart of the paper, the parameter $u$ will be confined in the interval $[u_0, -1]$, where $u_0$ is a large negative number. The parameter $\ub$ is confined in $[u_0, \delta]$, where $\delta$ is small positive parameter, which will be determined later. To simplify our presentation, we will ignore the $\theta$ directions in our pictures. Thus, instead of the left picture above, we will adopt the right picture; as such, the sphere $S_{\ub, u}$ is represented by a single point in the picture.\\

We use $L$ and $\Lb$ to denote the following future-pointed null vector fields:
\begin{equation*}
 L = \partial_t + \partial_r  \quad  \text{and} \quad \Lb = \partial_t - \partial_r.
\end{equation*}
We shall use $\nablaslash$ to denote the intrinsic covariant derivative on $S_{\ub,u}$. It is the restriction of the usual covariant derivative of $(\mathbb{R}^{3+1},g)$ to $S_{\ub,u}$, where $g$ is the standard flat Lorentzian metric on $\mathbb{R}^{3+1}$.

As usual, we use $\mathfrak{so}(3)$ to denote the Lie algebra of the rotation group $SO(3)$ which acts on $\mathbb{R}^{3+1}$ in a canonical way. We choose a set of generators $\{\Omega_1, \Omega_2, \Omega_3\}$ of $\mathfrak{so}(3)$ in the usual way, namely,
\begin{equation*}
\begin{split}
 \Omega_1 &= x_2 \frac{\partial}{\partial x_3} - x_3 \frac{\partial}{\partial x_2}, \\
\Omega_2 &= x_3 \frac{\partial}{\partial x_1} - x_1 \frac{\partial}{\partial x_3}, \\
\Omega_3 &= x_1 \frac{\partial}{\partial x_2} - x_2 \frac{\partial}{\partial x_1}.
\end{split}
\end{equation*}
In what follows, we shall use $\Omega$ to denote a generic $\Omega_i$ and use $\Omega^2$ to denote a generic operator of the form $\Omega_i \Omega_j$, and so on. For a given function $\phi$, we use $|\Omega \phi|$ to denote the sum $\sum_{i=1}^3 |\Omega_i \phi|$ and use $|\Omega^2 \phi|$ to denote the sum $\sum_{1\leq i,j \leq 3} |\Omega_i \Omega_j \phi|$, and so on.

We observe that there are two positive constants $C_1$ and $C_2$, such that on any $S_{\ub,u}$ we have
\begin{equation*}
C_1 |u| |\nablaslash \phi|\leq |\Omega \phi| \leq C_2 |u| |\nablaslash \phi|.
\end{equation*}
For the sake of simplicity, we write this inequality as $|\Omega \phi| \sim |u| |\nablaslash \phi|$. The proof is straightforward: we first check it on the unit sphere and then we use the scaling to get the factor $|u|$. In general, for a given $k \in \mathbb{Z}_{\geq 0}$, we have
\begin{equation}\label{Compare Omega and nablaslash}
 |\Omega^k \phi| \sim |u|^k |\nablaslash^k \phi|.
\end{equation}

The geometric picture all the way to null infinities is usually represented by the Penrose diagram of $(\mathbb{R}^{3+1},g)$:

\includegraphics[width=4in]{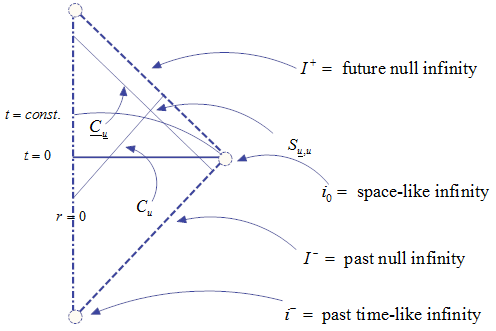}

For many situations in the current work, one has to study the Goursat problem of \eqref{Main Equation}, namely the characteristic problem, instead of the Cauchy problem of \eqref{Main Equation}. Therefore, the null hypersurfaces are the main geometric objets in sequel.

Our second main theorem is a semi-global existence result for a Goursat problem of \eqref{Main Equation} where the data is described on a part of a virtual null hypersurface, i.e., the past null infinity $\mathcal{I}\,^-$. More precisely, the initial data of \eqref{Main Equation} will be a radiation field given on the subset
\begin{equation*}
\mathcal{I}\,^-_\delta = \{(\ub, \theta) \in \mathcal{I}\,^- \,\,|\,\, \ub \leq \delta\},
\end{equation*}
in the asymptotic sense, where $\delta$ is a positive small parameter determined later. Explicitly, the datum is given by a smooth function
\begin{equation*}
  \varphi_{-\infty}: \mathcal{I}\,^-_\delta \rightarrow \mathbb{R}, \qquad (\ub, \theta) \mapsto \varphi_{-\infty}(\ub, \theta),
\end{equation*}
and we require the solution of \eqref{Main Equation} to obey the asymptotic condition,
\begin{equation*}
 \varphi(u,\ub, \theta) \sim \frac{1}{|u|} \varphi_{-\infty}(\ub, \theta) + o(\frac{1}{|u|})
\end{equation*}
We remark that, for linear waves, $\dfrac{1}{|u|}$ is the expected decay rate towards past null infinity. We also remark that, when we speak about the smallness or largeness of the data, we always mean the smallness or largeness of the radiation filed $\varphi_{-\infty}$ instead of $\varphi$ itself (which vanishes on $\mathcal{I}\,^-$).

In this work, we require the initial data $\varphi_{-\infty}$ to have the following form,
\begin{equation}\label{data at infinity}
  \varphi_{-\infty}(\ub,  \theta) =
\begin{cases}
0, &\text{if } \ub \leq 0, \\
\delta^{\frac{1}{2}}\psi_0 (\dfrac{\ub}{\delta}, \theta), &\text{if } 0\leq \ub \leq \delta,
\end{cases}
\end{equation}
where $\psi_0:(0,1) \times \mathbb{S}^2 \rightarrow \mathbb{R}$ is a fixed compactly supported smooth function. More generally, we can put $\varphi_{-\infty}$ in an open set of certain Sobolev spaces defined on $\mathcal{I}\,^-$. We do not pursue this point at the moment and we shall revisit it at the last section of the paper.

The datum given in the above form is called a \emph{short pulse}, a name coined by Christodoulou in \cite{Ch-08}. In his work, he prescribes the shear (more precisely, the conformal geometry) of the initial null hypersurface in a similar form. The shear in the situation of \cite{Ch-08} is exactly the initial data for Einstein vacuum equation.

One may argue that the datum \eqref{data at infinity} is small when $\delta$ is small, at least pointwisely. In fact, the $L^\infty$ norm is irrelevant to the equation \eqref{Main Equation} since we may always add a constant to get a new solution. The size of the datum should be measured at the level of derivatives. While the $\partial_{\ub}$ derivative of the datum can be extremely \emph{large} if $\delta$ is small. In sequel, we shall see that the energy of $\varphi$ will be comparable to $1$ and the higher order energy of $\varphi$ will be comparable to some $\delta^{-a}$ with $a>0$. Therefore, the datum is no longer small in energy spaces.

Before the statement of the second main theorem, we recall that the domain of dependence $\mathcal{D}^+(\mathcal{I}\,^-_\delta)$ of $\mathcal{I}\,^-_\delta$ is the backwards light-cone in $\mathbb{R}^{3+1}$ with vertex at $(\delta, 0,0,0)$.
\begin{Main Theorem 2}\label{Main Theorem 2}
For the non-linear wave equation
 \begin{equation*}
 \Box \varphi =Q(\nabla \varphi, \nabla \varphi),
 \end{equation*}
where $Q$ is a null form, with the following asymptotic characteristic initial datum on the out-going null infinity $\mathcal{I}\,^-$:
\begin{equation}\label{Initial-data-from-past-null}
 \lim_{u\rightarrow \infty} |u|\varphi(u,\ub,\theta) = \varphi_{-\infty}(\ub,  \theta)  \quad \text{ for all } (\ub,  \theta),
\end{equation}
where $\varphi_{-\infty}\in C^\infty(\mathcal{I}\,^-_\delta)$ is given by
\begin{equation*}
  \varphi_{-\infty}(\ub,  \theta) =
\begin{cases}
0, &\text{if } \ub \leq 0, \\
\delta^{\frac{1}{2}}\psi_0 (\dfrac{\ub}{\delta}, \theta), &\text{if } 0\leq \ub \leq \delta.
\end{cases}
\end{equation*}
Here $\psi_0:(0,1) \times \mathbb{S}^2 \rightarrow \mathbb{R}$ is a fixed compactly supported smooth function. If $\delta$ is sufficiently small, there exists one and only one classical solution $\varphi$ on $\mathcal{D}^+(\mathcal{I}\,^-_\delta) \cap \{t \leq -1\}$, such that the radiation field of $\varphi$ is exactly $\varphi_\infty$, i.e. as described in \eqref{Initial-data-from-past-null}.
\end{Main Theorem 2}
The theorem can also be depicted as follows (notice that the region $\mathcal{D}^+(\mathcal{I}\,^-_\delta) \cap \{t \leq -1\}$ is enclosed by four red lines):

\includegraphics[width=4.5in]{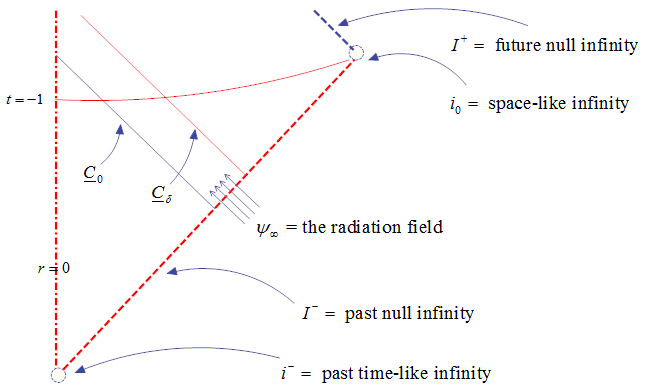}

In the course of the proof of Main Theorem 2, we shall see that the energy flux through $\Cb_\delta$ (if fact, the flux will be bounded above by $\delta^a$ for some $a>0$). Thus, almost all of the energy is confined in $\mathcal{D}^+(\mathcal{I}\,^-_\delta) \cap \{t \leq -1\}$ and there is very little energy radiates to the future null infinity. Intuitively, the waves travel from past null infinity in the incoming direction all the way up to the finite time $t =-1$ with almost no loss of energy. This manifests a strong focusing effect of the nonlinear wave for a special choice of data.\\

We now turn to the last main theorem of the paper where the data are prescribed on a fixed finite null hypersurface $C_{u_0}$ instead of on past null infinity. Only for this third main theorem, we can fix a finite $u_0 \leq -2$, say $u_0 = -10$.

The initial out-going null hypersurface is
\begin{equation*}
C_{u_0} = \{(u,\ub, \theta)| u_0 \leq \ub \leq \delta\},
\end{equation*}
and we also require the initial data $\varphi_{u_0}$ to behave like a pulse:
\begin{equation}
  \varphi_{u_0}(\ub,  \theta) =
\begin{cases}
0, &\text{if } u_0 \leq \ub \leq 0, \\
\dfrac{\delta^{\frac{1}{2}}}{|u_0|}\psi_0 (\dfrac{\ub}{\delta}, \theta), &\text{if } 0\leq \ub \leq \delta.
\end{cases}
\end{equation}
Beside the above requirement, we impose the condition that $\psi_0$ is localized for the angular argument $\theta$, i.e., there is a fixed angle $\theta_0 \in \mathbb{S}^2$, so that the support of $\psi_0$ is contained in a geodesic ball $B_{\delta^{\frac{1}{2}}}(\theta_0)$ centered at $\theta_0$ of radius $\delta^{\frac{1}{2}}$ on $\mathbb{S}^2$. Moreover, we can require $\psi_0$ satisfy the following estimates on $\mathbb{S}^2$,
\begin{equation*}
 \sum_{k=1}^4 \delta^{\frac{k-1}{2}}\|\nablaslash^k \psi_0\|_{L^\infty(\mathbb{S}^2)} \lesssim 1,
\end{equation*}
where $\nablaslash$ is the covariant derivative for the standard metric on $\mathbb{S}^2$. We call such initial datum $\varphi_{u_0}(\ub,  \theta)$ a \emph{short pulse localized in $B_{\delta^{\frac{1}{2}}}(\theta_0)$}.
\begin{Main Theorem 3}\label{Main Theorem 3}
Consider the non-linear wave equation
\begin{equation*}
\Box \varphi =Q(\nabla \varphi, \nabla \varphi),
\end{equation*}
with characteristic initial data given by a short pulse data localized in $B_{\delta^{\frac{1}{2}}}(\theta_0).$
If $\delta$ is small enough, there exists one and only one classical solution $\varphi$ on $\mathcal{D}^+(C_{u_0}) \cap \{u \leq -1\}$.

Moreover, if one uses $C^o_{u}\triangleq u \times [0, \delta] \times B_{\delta^{\frac{1}{2}}}(\theta_0)$ to denote a small open set on the outgoing null hypersurface $C_u$,  the energy is almost localized in $\cup_{u \in [u_0, -1]} C^o_{u}$ which is a tubular neighborhood of some incoming null geodesic parameterized by $u \in [u_0,-1]$ with fixed $\ub$ and $\theta=\theta_0$. More precisely, for all $u \in [u_0,-1]$, the incoming energy outside the small neighborhood $C^o_{u},$ is bounded as follows
\begin{equation*}
 \int_{C_u -  C^o_{u}}|L \varphi|^2 + |\nablaslash \varphi|^2 \lesssim \delta^2.
\end{equation*}
And the energy inside $C^o_{u}$ is almost conserved,
\begin{equation*}
 |\int_{C^o_{u}} |L \varphi|^2 + |\nablaslash \varphi|^2 -  \int_{C^o_{u_0}} |L \varphi|^2 + |\nablaslash \varphi|^2 |\lesssim \delta.
\end{equation*}
\end{Main Theorem 3}
The above theorem can also be depicted as follows

\includegraphics[width=5in]{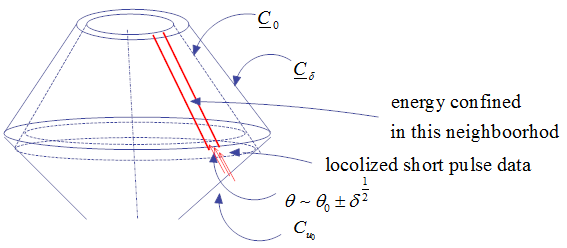}

We can also show that there is almost no energy radiating out through the incoming null hypersurface $\Cb_{\delta}$. Quantitatively, we have
\begin{equation*}
 \int_{\Cb_\delta} |\Lb \varphi|^2 + |\nablaslash \varphi|^2 \lesssim \delta.
\end{equation*}
The above estimates are also true for higher order flux and this will be clear later.

In the proof, we shall see that we can prove a stronger version which we do have $L^\infty$ control on all the first derivatives of $\varphi$. On the final outgoing null hypersurface $C_{-1}$, the energy is mostly contributed by $L\varphi$ and the other components are small (measured in terms of a positive power of $\delta$), in fact, we can show that $L \varphi$ is almost unchanged along the evolution, i.e., $|L\varphi(-1, \ub, \theta)| \sim |L\varphi(u_0, \ub, \theta)|.$  More precisely,
\begin{equation*}
 |L\varphi(-1, \ub, \theta)-L\varphi(u_0, \ub, \theta)|\lesssim \delta^{\frac{1}{2}}.
\end{equation*}
Since we take a short pulse data localized in $B_{\delta^{\frac{1}{2}}}(\theta_0)$, roughly speaking, we have
\begin{equation*}
L \varphi (u_0, u, \theta) =
\begin{cases}
 \delta^{-\frac{1}{2}}, \quad & \text{ for } \theta \in B_{\delta^{\frac{1}{2}}}(\theta_0),\\
0, \quad & \text{ for } \theta \notin B_{\delta^{\frac{1}{2}}}(\theta_0).
\end{cases}
\end{equation*}
Thus, for the final surface $C_{-1}$, roughly speaking, we actually have
\begin{equation*}
L \varphi (-1, u, \theta) \sim
\begin{cases}
 \delta^{-\frac{1}{2}}, \quad & \text{ for } \theta \in B_{\delta^{\frac{1}{2}}}(\theta_0),\\
\delta^{\frac{1}{2}}, \quad & \text{ for } \theta \notin B_{\delta^{\frac{1}{2}}}(\theta_0).
\end{cases}
\end{equation*}
We then integrate those $L^\infty$ estimates to derive the desired control on energy.

Therefore, we have a concentration phenomenon for the a special class of solutions of \eqref{Main Equation} and we say that the solution constructed out from the short pulse data localized in some small spherical sector are \textit{strongly focused}.  As we remarked before, the third theorem appears to be new even for linear wave equations.

\subsection{Comments on the Proof}
We would like to address now the motivations for and difficulties in various estimates leading to the theorem, and then we give an outline of the proof.\\

We first explain the idea of the relaxation for the energy estimates.  This is done in Section \ref{Section An A priori Estimates up to Third Derivatives} which provides an a priori estimate up to four derivatives for \eqref{Main Equation} with short pulse initial data. It is the key ingredient for the whole paper.

The proof is based on the usual vector field method to obtain these estimates. We compute the amplitudes of $L\varphi$ and $\nablaslash \varphi$ on $C_{u_0}$ where the data are given. Roughly speaking, the quantitative estimates look like
\begin{equation}\label{eq 1}
\begin{split}
 \|L\varphi\|_{L^\infty(C_{u_0})} &\sim \delta^{-\frac{1}{2}} ,\\
 \|\nablaslash \varphi\|_{L^\infty (C_{u_0})} &\sim \delta^{\frac{1}{2}}.
\end{split}
\end{equation}
When one derives energy estimates for $\varphi$, a natural choice of the multiplier vector field would be $L$, thus, the energy flux would be
\begin{equation}\label{eq 2}
 \int_{C_u} |L \varphi|^2 + \int_{C_{\ub}} |\nablaslash \varphi|^2.
\end{equation}
If we stick to \eqref{eq 1} as an ideal propagation estimates without any relaxation, we expect
\begin{equation*}
\begin{split}
 \int_{C_u} |L \varphi|^2 &\sim 1,\\
\int_{\Cb_{\ub}} |\nablaslash \varphi|^2 & \sim \delta.
\end{split}
\end{equation*}
Therefore, the flux term \eqref{eq 2} will only yield the bound for $ L\varphi$ and not for $\nablaslash \varphi$, because the bound for $\nablaslash \varphi$ is too small compared to that of $L \varphi$ estimated in this way. In other words, in the end, we do not expect to close the argument if we perform a standard bootstrap argument. Of course, this is due to the choice of the initial data: the short pulse data do not respect the natural scaling of the wave equation!

To resolve this difficulty, one has to relax the estimates for $\nablaslash \varphi$, namely, although the size of the initial data suggest the amplitude $\nablaslash \varphi$ behaves like $\delta^{\frac{1}{2}}$, we pretend it amplitude is worse to match \eqref{eq 1}. Therefore, in the energy estimates, we only expect $\nablaslash \varphi$ to behaves like
\begin{equation}\label{eq 3}
 \|\nablaslash \varphi\|_{L^\infty (C_{u})} \sim 1.
\end{equation}
It turns out that, under this weak assumption, we can still close a bootstrap argument to derive energy estimates. Moreover, once the bootstrap argument is closed, if we can afford one more derivative, we can retrieve the stronger estimates \eqref{eq 1} for $\nablaslash \varphi$. This will be proved later.\\

The second difficulty is about the number of derivatives needed for the a priori estimates. Instead of four derivatives, we may attempt to use three derivatives in Section \ref{Section An A priori Estimates up to Third Derivatives}, since this is still good to control $L^\infty$ norm of one derivatives via Sobolev inequalities. This does not work in an obvious way and the reason is as follows: when one derives estimates for third derivatives, we can use the information already obtained for the first and second derivatives, but $L \varphi$ (the worst term) still consists a third derivative term. Thus, we can not reduce the nonlinear term to a linear one. But this is completely different if we use four derivatives: when one tries to control fourth derivatives of the solution, we must have already obtained estimates up to three derivatives. Thus, the control of $L\varphi$ is then independent of the fourth derivatives. Hence, this reduces the nonlinear term to the linear case where the Gronwall's inequality can used to absorb all the bad terms.

We also point out that the second difficulty is also related to the relaxation of the energy estimates. If we use only three derivatives, for some null forms, say $Q_{0j}$, it leads to a nonlinear term of the form $\nablaslash^3 \varphi \cdot L \varphi$. As we commented in last paragraph, we do not have linear control on the $L^\infty$ norm of $L\varphi$, and yet because we use a relaxed estimates, the control of $\nablaslash^3 \varphi$ is not good enough to compensate for the large amplitude of $L \varphi$. This will lead to a large nonlinear term which can not be controlled.\\

We now outline the proof of our Main Theorem 1. The parameter $u_0$ is a large negative number which will be eventually sent to $-\infty$. The following picture helps to understand the structure of the proof:

\includegraphics[width=5.5 in]{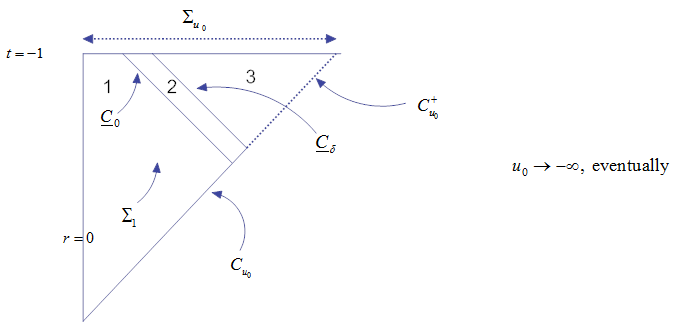}

\begin{itemize}
\item Step 1. We prescribe initial datum on the null hypersurface $C_{u_0}$ where $u_0 \leq \ub \leq \delta$. When $u_0 \leq \ub \leq 0$, the datum is trivial, therefore the solution in Region $1$ in the picture is a constant map. When $0 \leq \ub \leq \delta$, the datum will be prescribed in a specific form (see Section \ref{Section Initial Data in Short Pulse Regime} for detailed account):
    \begin{equation*}
 \varphi(\ub, u_0, \theta) = \delta^{\frac{1}{2}}\psi_0 (\frac{\ub}{\delta}, \theta),
\end{equation*}
with the energy approximately equal to $E_0$. We then show that we can construct a solution in Region $2$ in the picture.

\item Step 2. From the first step, we actually can show that the restriction of the solution already constructed to $\Cb_{\delta}$ are small in energy norms. On $C_{u_0}^+$, which is defined by
    \begin{equation*}
    C_{u_0}^+ = \{ p \in C_{u_0}^+ | \delta \leq \ub(p), t(p) \leq -1 \},
    \end{equation*}
we extend the datum (from Step 1) by zero. Therefore, the datum is also small on $C_{u_0}^+$. We can now use $\Cb_{\delta}$ and $C_{u_0}^+$ as initial hypersurfaces to solve a small data problem to construct solution in Region $3$ in the picture.

\item  Step 3.  We patch the solutions in Regions $1,2$ and $3$ to get one single solution in above picture and then restrict it on the surface $\Sigma_{u_0}$. We then let $u_0$ go to $-\infty$ and use the Arzela-Ascoli lemma to get a solution all the way up to past null infinity. The restrictions to $\Sigma_{u_0}$ then yields a subsequence which converges to a Cauchy data. Finally, we can reverse and shift the time to complete the proof of Main Theorem 1.
\end{itemize}

We remark that Step 1 is the most difficult part since the datum is no longer small and we have to carefully deal with the cancelations from null forms and the profile of the data. Steps 2 and 3 are more or less standard.

\section{Preliminaries}\label{Section Preliminaries}

\subsection{Energy Estimates Scheme}\label{Energy estimates scheme}
Let $\phi$ be a solution for the following non-homogenous wave equation on $\mathbb{R}^{3+1}$, i.e.,
\begin{equation}\label{non-homogenous wave equation}
 \Box \phi = \Phi.
\end{equation}
We define the energy momentum tensor associated to $\phi$ to be
\begin{equation*}
 \mathbb{T}_{\alpha \beta}[\phi] = \nabla_\alpha \phi \nabla_\beta \phi -\frac{1}{2}g_{\alpha \beta} \nabla^{\mu} \phi \nabla_{\mu} \phi.
\end{equation*}
This tensor is symmetric and it enjoys the following divergence identity,
\begin{equation}\label{divergence of T}
 \nabla^\alpha  \mathbb{T}_{\alpha \beta}[\phi] = \Phi \cdot \nabla_\beta \phi.
\end{equation}

Given a vector field $X$, which is usually called a \emph{multiplier vector field}, the associated energy currents are defined as follows
\begin{equation*}
\begin{split}
 J^{X}_{\alpha}[\phi] &= \mathbb{T_{\alpha\mu}}[\phi]X^{\mu},\\
 K^X [\phi] &= \mathbb{T}^{\mu\nu}[\phi]\, ^{(X)}\pi_{\mu \nu},
\end{split}
\end{equation*}
where the deformation tensor $^{(X)}\pi_{\mu \nu}$ is defined by
\begin{equation}
  ^{(X)}\pi_{\mu \nu} = \frac{1}{2} \mathcal{L}_X g_{\mu \nu} = \frac{1}{2}(\nabla_\mu X_{\nu} + \nabla_\nu X_{\mu}).
\end{equation}
Thanks to \eqref{divergence of T}, we have
\begin{equation}\label{divergence of J}
 \nabla^\alpha J^{X}_{\alpha}[\phi] =  K^X [\phi] + \Phi \cdot X \phi
\end{equation}

In the null frame $\{e_1,e_2, e_3 =\Lb, e_4 =L\}$, we compute $\mathbb{T_{\alpha\beta}}[\phi]$ as
\begin{equation*}
\begin{split}
\mathbb{T}(L,L)[\phi] &= |L \phi|^2, \\
 \mathbb{T}(L,\Lb)[\phi] &=  |\nablaslash \phi|^2,\\
 \mathbb{T}(\Lb,\Lb)[\phi]&= |\Lb \phi|^2.
 \end{split}
\end{equation*}
This manifests the dominant energy condition for $\mathbb{T_{\alpha\beta}}[\phi]$.

We shall use $X = \Omega (\in \mathfrak{so}(3))$, $L$ and $\Lb$ as mutiplier vector fields, the corresponding deformation tensors and currents are computed as follows,
\begin{equation}\label{deformation tensor of Omega-L-Lb}
\begin{split}
  ^{(\Omega)}\pi_{\mu \nu} &= 0, \quad ^{(L)}\pi = \frac{2}{r} \gslash, \quad ^{(\Lb)}\pi = -\frac{2}{r} \gslash.\\
  K^\Omega &= 0, \quad  K^L = \frac{1}{r} L\phi \, \Lb \phi, \quad K^{\Lb} = -\frac{1}{r} L\phi \, \Lb \phi.
\end{split}
\end{equation}
where $\gslash$ is the restriction of the Minkowski metric $m$ on a two sphere $S_{\ub,u}$.

\begin{minipage}[!t]{0.3\textwidth}
\includegraphics[width=2.5in]{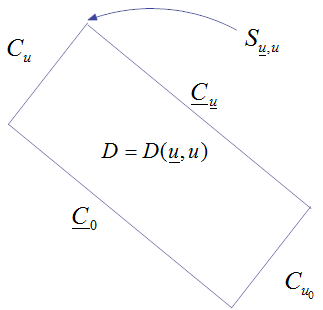}
\end{minipage}
\hspace{0.1\textwidth}
\begin{minipage}[!t]{0.5\textwidth}
We use $\mathcal{D}(u,\ub)$ to denote the space-time slab enclosed by the hypersurfaces $C_{u_0}$, $\Cb_0$, $C_{u}$ and $\Cb_{\ub}$. We integrate \eqref{divergence of J} on $\mathcal{D}(u,\ub)$, which is a domain enclosed by the null hypersurfaces $C_u, \Cb_{\ub}, C_{u_0}$ and $\Cb_0$, to derive
\begin{equation*}
\begin{split}
 &\quad \int_{C_u} \mathbb{T}[\phi](X,L)+\int_{\Cb_{\ub}} \mathbb{T}[\phi](X,\Lb)\\
 &=  \int_{C_{u_0}} \mathbb{T}[\phi](X,L)+\int_{\Cb_{0}} \mathbb{T}[\phi](X,\Lb)\\
 &\quad  +\doubleint_{\mathcal{D}(u,\ub)}  K^X [\phi] + \Phi \cdot X \phi.
 \end{split}
\end{equation*}
where  $L$ and $\Lb$ are corresponding normals of the null hypersurfaces $C_u$ and $\Cb_{\ub}$.
\end{minipage}

In applications, the data on $\Cb_{0}$ is always vanishing, thus, we have the following fundamental energy identity,
\begin{equation}\label{fundamental energy identity}
 \int_{C_u} \mathbb{T}[\phi](X,L)+\int_{\Cb_{\ub}} \mathbb{T}[\phi](X,\Lb)  =  \int_{C_{u_0}} \mathbb{T}[\phi](X,L) +\doubleint_{\mathcal{D}(u,\ub)}  K^X [\phi] + \Phi \cdot X \phi.
\end{equation}

\subsection{Null Forms}\label{section null form}
For a real valued quadratic form $Q$ defined on $\mathbb{R}^{3+1}$, it is called a \emph{null form} if for all null vector $\xi \in \mathbb{R}^{3+1}$, we have
$Q(\xi, \xi) = 0.$

We list seven obvious examples of null forms ($\alpha \neq \beta$),
\begin{equation}\label{basic null forms}
\begin{split}
 Q_0 (\xi, \eta) &= g(\xi, \eta), \\
 Q_{\alpha \beta}(\xi, \eta) &= \xi_\alpha \eta_\beta - \eta_\alpha \xi_\beta,
 \end{split}
\end{equation}
where $\xi, \eta \in \mathbb{R}^{3+1}$ and $\alpha, \beta \in \{0,1,2,3\}$. In fact, we can easily show that the space of all null forms on $\mathbb{R}^{3+1}$ is a real vector space and its dimension is 7. The above seven quadratic forms yield a basis for the space of null forms. Thus for an arbitrary null form $Q$, it can be written as a $\mathbb{R}$-linear combination of basic null forms in \eqref{basic null forms}.

Given two scalar function $\phi$, $\psi$ and a null form $Q(\xi,\eta) = Q^{\alpha\beta}\, \xi_\alpha \, \eta_\beta$, the expression $Q(\nabla \phi, \nabla \psi)$ means $Q(\nabla \phi, \nabla \psi) = Q^{\alpha\beta} \, \partial_\alpha \phi \, \partial_\beta \psi.$

For a given rotational Killing vector field $\Omega \in \mathfrak{so}(3)$, we have
\begin{equation}\label{commute null form with Omega}
 \Omega Q(\nabla \phi, \nabla \psi) = Q(\nabla \Omega \phi, \nabla \psi) + Q(\nabla \phi, \nabla \Omega \psi) + \widetilde{Q}(\nabla \phi, \nabla \psi),
\end{equation}
where $\widetilde{Q}$ is another null form. It suffices to check this claim on the basic null forms in \eqref{basic null forms}. In fact, for $\Omega = \Omega_{ij}$, one can check immediately that,
\begin{align*}
 \Omega_{ij} Q_0(\nabla \phi, \nabla \psi) &= Q_0(\nabla \Omega_{ij} \phi, \nabla \psi) + Q_0(\nabla \phi, \nabla \Omega_{ij} \psi),\\
 \Omega_{ij} Q_{\alpha\beta}(\nabla \phi, \nabla \psi) &= Q_{\alpha\beta}(\nabla \Omega_{ij} \phi, \nabla \psi) + Q_{\alpha\beta}(\nabla \phi, \nabla \Omega_{ij} \psi) + \widetilde{Q}(\nabla \phi, \nabla \psi),
\end{align*}
where
\begin{equation*}
 \widetilde{Q} = \delta_{i\alpha} Q_{j \beta} - \delta_{j \alpha} Q_{i \beta} +  \delta_{j \beta} Q_{i \alpha}-  \delta_{i\beta} Q_{j \alpha}.
\end{equation*}

For a vector field $X$, we denote
\begin{equation*}
     (Q \circ X)(\nabla \phi, \nabla \psi) = Q(\nabla X \phi, \nabla \psi) + Q(\nabla \phi, \nabla X \psi),
\end{equation*}
and $[Q, X] = X Q- Q\circ X$, we then have
\begin{equation*}
\begin{split}
  [L, Q_0](\nabla \phi, \nabla \psi) & =\frac{2}{r}(Q_0(\nabla \phi, \nabla \psi)+L\phi \Lb\psi+\Lb\phi L\psi), \\
   [\Lb, Q_0](\nabla \phi, \nabla \psi) & =-\frac{2}{r}(Q_0(\nabla \phi, \nabla \psi)+L\phi \Lb\psi+\Lb\phi L\psi),\\
   [L, Q_{ij}](\nabla \phi, \nabla \psi) & =\frac{2}{r}Q_{ij}(\nabla \phi, \nabla \psi)+\frac{1}{2r^2}\{(\Lb\phi-L\phi)\Omega_{ij}\psi+(L\psi-\Lb\psi)\Omega_{ij}\phi\}, \\
   [\Lb, Q_{ij}](\nabla \phi, \nabla \psi) & =-\frac{2}{r}Q_{ij}(\nabla \phi, \nabla \psi)+\frac{1}{2r^2}\{(L\phi-\Lb\phi)\Omega_{ij}\psi+(\Lb\psi-L\psi)\Omega_{ij}\phi\},\\
   [L, Q_{0i}](\nabla \phi, \nabla \psi) & =\frac{1}{r}Q_{0i}-\frac{x_i}{2r^2}(\Lb\phi L\psi-L\phi \Lb\psi), \\
   [\Lb, Q_{0i}](\nabla \phi, \nabla \psi) & =-\frac{1}{r}Q_{0i}-\frac{x_i}{2r^2}(L\phi \Lb\psi-\Lb\phi L\psi).
\end{split}
\end{equation*}
Schematically, we write the above as
\begin{equation}\label{commutates of L LB and Q}
\begin{split}
  [L, Q](\nabla \phi, \nabla \psi) &=\frac{1}{r}[Q(\nabla \phi, \nabla \psi)+L\phi \Lb\psi+\Lb\phi L\psi+L\phi \nablaslash \psi\\
  &\quad +L\psi \nablaslash \phi+\Lb\phi \nablaslash \psi+\Lb\psi \nablaslash \phi], \\
   [\Lb, Q](\nabla \phi, \nabla \psi) &=\frac{1}{r}[Q(\nabla \phi, \nabla \psi)+L\phi \Lb\psi+\Lb\phi L\psi+L\phi \nablaslash \psi\\
   &\quad +L\psi \nablaslash \phi+\Lb\phi \nablaslash \psi+\Lb\psi \nablaslash \phi].
\end{split}
\end{equation}

Besides the above algebraic properties of null forms, from the analytic point of view, in a null form $Q(\nabla \phi, \nabla \psi)$, a \emph{bad} component is always coupled to a \emph{good} component. To make a precise statement, we remark that in this
paper, all the derivatives of $\varphi$ involving the outgoing
direction $L$ (for example $L\varphi$ and $L\nablaslash \varphi$)
are bad since in $L^{\infty}$ norm, their size is comparable to
$\delta^{-\frac{1}{2}}$ which is \emph{large} and it
is in this sense they are bad components; for other derivatives,
their size is at least as good as $\delta^{\frac{1}{4}}$ which is
\emph{small} and it is in this sense they are good components. To
see why a bad component is coupled to a good component, we use null
frame $\{e_1,e_2, e_3 =\Lb, e_4 =L\}$ to express the null form as
follows,
\begin{align}\label{null decomposition of null forms}
 Q(\nabla \phi, \nabla \psi) &= Q^{43} \, L \phi \, \Lb\psi + Q^{34}\, \Lb \phi \, L\psi + Q^{4 a}\, L\phi \, \nablaslash_i \psi + Q^{3 a}\, \Lb\phi \, \nablaslash_a \psi \\
\notag &\quad + Q^{a 4}\, \nablaslash_a \phi \, L\psi + Q^{a 4}\, \nablaslash_a \phi \, \Lb\psi + Q^{ab}\, \nablaslash_a \phi \, \nablaslash_b \psi.
\end{align}
Once again, to prove \eqref{null decomposition of null forms}, it suffices to check for basic null forms in \eqref{basic null forms}.

In particular, \eqref{null decomposition of null forms} shows that $L\phi \cdot L \psi$ is forbidden which is a product of two bad components. We also observe that the coefficients appearing in \eqref{null decomposition of null forms} are bounded by a universal constant. Therefore, in applications, we shall bound the null form pointwisely as follows,
\begin{align}\label{null form bound}
 |Q(\nabla \phi, \nabla \psi)| &\lesssim |L \phi| \,|\Lb\psi| + |\Lb \phi|\,|L\psi| + |\nablaslash \phi|\, |\nablaslash \psi| \\
\notag &\quad +(| L\phi| + |\Lb \phi|)|\nablaslash \psi|  + |\nablaslash \phi|(|L\psi| +  |\Lb\psi|) .
\end{align}

\subsection{Sobolev and Gronwall's Inequalities} \label{Soboleve and Gronwall}
We first recall Sobolev inequalities on $C_u$, $\Cb_{\ub}$ and $S_{\ub, u}$.
 For any real valued function $\phi$ and $q \geq -\frac{1}{2}$, we have
 \begin{equation*}
 |u|^{\frac{1}{2}} \|\phi \|_{L^{4}(S_{\ub,u})} \lesssim \| L\phi \|^{\frac{1}{2}}_{L^{2}(C_{u})}(\| \phi \|^{\frac{1}{2}}_{L^{2}(C_{u})}
 +|u|^{\frac{1}{2}}\|\nablaslash\phi\|^{\frac{1}{2}}_{L^{2}(C_{u})}),
 \end{equation*}
\begin{equation*}
 |u|^{q}\|\phi\|_{L^{4}(S_{\ub,u})} \lesssim |u_{0}|^{q}\|\phi\|_{L^{4}(S_{\ub,u_{0}})}+
 \| |u|^{q}\Lb \phi \|^{\frac{1}{2}}_{L^{2}(\Cb_{\ub})}(\| |u|^{q-1}\phi \|^{\frac{1}{2}}_{L^{2}(\Cb_{\ub})}
 +\| |u|^{q}\nablaslash\phi \|^{\frac{1}{2}}_{L^{2}(\Cb_{\ub})}),
 \end{equation*}
\begin{equation*}
 \|\phi\|_{L^\infty(S_{\ub, u})} \lesssim |u|^{-\frac{1}{2}}\| \phi \|_{L^{4}(S_{\ub,u})} + |u|^{\frac{1}{2}}\| \nablaslash\phi \|_{L^{4}(S_{\ub,u})}.
 \end{equation*}
The proof is based on the standard isoperimeric inequality on unit sphere. We refer the reader to \cite{Ch-08} for a proof. We remark that in the first inequality, we assume that $\phi = 0$ on $\Cb_0$. This assumption is always valid when we apply the inequality in this paper. We also need a variant of the second inequality. In fact, Take $q=0,$ we derive
\begin{equation}\label{Sobolev Lb}
 \|\phi\|_{L^{4}(S_{\ub,u})} \lesssim \|\phi\|_{L^{4}(S_{\ub,u_{0}})}+
 \| \Lb \phi \|^{\frac{1}{2}}_{L^{2}(\Cb_{\ub})}(\| |u|^{-1}\phi \|^{\frac{1}{2}}_{L^{2}(\Cb_{\ub})}
 +\| \nablaslash\phi \|^{\frac{1}{2}}_{L^{2}(\Cb_{\ub})}).
 \end{equation}

Next, we recall the standard Gronwall's inequality. Let $\phi(t)$ be a non-negative function defined on an interval $I$ with
initial point $t_0$. If $\phi$ satisfies the following ordinary differential inequality,
\begin{equation*}
 \frac{d}{d t} \phi \leq a \cdot \phi + b,
\end{equation*}
where two non-negative functions  $a$, $b \in L^1(I)$, then for all $t \in I$, we have the following estimates,
\begin{equation*}
 \phi(t) \leq e^{A(t)}(\phi(t_0)+\int_{t_0}^t e^{-A(\tau)}b(\tau)d\tau),
\end{equation*}
where $A(t) = \int_{t_0}^t a(\tau) d\tau$. The proof is
straightforward. And there is another version of Gronwall's inequality \cite{K-R-09}, which will be useful in the proof.
\begin{lemma}
Let $f(x, y), g(x, y)$ be positive functions defined in the rectangle, $0\leq x\leq x_0, 0\leq y\leq y_0$ which verify the inequality,
\begin{equation*}
f(x, y)+g(x, y)\lesssim J +a\int_{0}^{x}f(x', y)dx'+b \int_{0}^{y}g(x,y')dy'
\end{equation*}
for some nonnegative constants $a, b$ and $J.$ Then, for all $0\leq x\leq x_0, 0\leq y\leq y_0$,
\begin{equation*}
f(x, y), g(x, y)\lesssim Je^{ax+by}.
\end{equation*}
\end{lemma}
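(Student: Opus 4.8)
The plan is to reduce the two-variable inequality to a one-variable one by freezing one of the variables and absorbing the corresponding integral term, then to iterate the resulting scalar Gronwall estimate. First I would set $h(x,y) = f(x,y) + g(x,y)$, so that the hypothesis reads
\begin{equation*}
h(x,y) \lesssim J + a\int_0^x f(x',y)\,dx' + b\int_0^y g(x,y')\,dy' \leq J + a\int_0^x h(x',y)\,dx' + b\int_0^y h(x,y')\,dy',
\end{equation*}
where in the last step I used $f,g \leq h$ together with positivity. The key structural point is that the first integral is taken in $x$ with $y$ fixed, and the second in $y$ with $x$ fixed; this is exactly the setup where one can apply the classical one-variable Gronwall lemma in each slot separately.

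Next I would fix $y$ and regard the inequality as an integral inequality in $x$: writing $F(x) = h(x,y)$ and treating $b\int_0^y h(x,y')\,dy'$ as an inhomogeneous term $B(x)$ (which is itself bounded in terms of $h$), the classical Gronwall lemma recalled just above gives $h(x,y) \lesssim e^{ax}\bigl(J + b\int_0^y h(x,y')\,dy'\bigr)$. I would then feed this back into itself: since $e^{ax} \geq 1$, the function $\widetilde h(x,y) = e^{-ax} h(x,y)$ satisfies $\widetilde h(x,y) \lesssim J + b\int_0^y \widetilde h(x,y')\,dy'$ (here one uses that $e^{-ax}$ does not depend on $y'$, so it passes through the $y'$-integral), and a second application of the scalar Gronwall lemma in the $y$-variable yields $\widetilde h(x,y) \lesssim J e^{by}$, that is $h(x,y) \lesssim J e^{ax+by}$. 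Since $f,g \leq h$, the claimed bound follows.

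The one point that needs a little care — and which I would expect to be the only real obstacle — is the bookkeeping in the iteration: one must make sure that when the estimate from the $x$-Gronwall step is substituted back, the exponential factor $e^{ax}$ genuinely factors out of the $y$-integral (it does, because it is independent of the dummy variable $y'$) and that the implied constants do not accumulate in a way that depends on $x_0$ or $y_0$. A clean way to handle this, avoiding the substitution subtlety altogether, is the standard Picard-type iteration: define $h_0 \equiv C J$ (with $C$ the implicit constant) and $h_{n+1}(x,y) = CJ + a\int_0^x h_n(x',y)\,dx' + b\int_0^y h_n(x,y')\,dy'$, show by induction that $h(x,y) \leq h_n(x,y)$ and that $h_n(x,y) \leq CJ\sum_{k=0}^{n}\frac{(ax+by)^k}{k!}$, and let $n \to \infty$ to obtain $h(x,y) \leq CJ e^{ax+by}$. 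Either route is routine; the substance is entirely in the observation that the mixed integral term decouples into one pure $x$-integral and one pure $y$-integral.
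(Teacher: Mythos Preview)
The paper does not supply its own proof of this lemma; it merely quotes the statement and cites \cite{K-R-09}. So there is nothing to compare against, only your argument to check --- and your first (main) route contains a real error, while the second is set up incorrectly.

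In the sequential-Gronwall approach, Step~2 is already delicate: the ``inhomogeneous'' term $B(x)=b\int_0^y h(x,y')\,dy'$ depends on the unknown $h$ through its $x$-argument, so the one-variable Gronwall lemma does not immediately produce $h(x,y)\lesssim e^{ax}\bigl(J+B(x)\bigr)$ unless you first show $B$ is nondecreasing in $x$, which is not given. But even granting Step~2, Step~3 fails. From $h(x,y)\lesssim e^{ax}\bigl(J+b\int_0^y h(x,y')\,dy'\bigr)$, multiplying by $e^{-ax}$ gives
\[
\widetilde h(x,y)\ \lesssim\ J + b\int_0^y h(x,y')\,dy'
\ =\ J + b\,e^{ax}\!\int_0^y \widetilde h(x,y')\,dy',
\]
since $h(x,y')=e^{ax}\widetilde h(x,y')$. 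The factor $e^{ax}$ does \emph{not} disappear; your parenthetical (``$e^{-ax}$ passes through the $y'$-integral'') moves $e^{-ax}$ inside the integral, but the integrand is $h$, not $e^{ax}h$. Running Gronwall in $y$ with coefficient $be^{ax}$ then yields only $h(x,y)\lesssim J\,e^{ax+b e^{ax}y}$, a double exponential, not $Je^{ax+by}$.

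Your Picard alternative is the right idea but is stated backwards: the base case $h\le h_0=CJ$ is not part of the hypotheses and is generally false. The correct iteration is to substitute the hypothesis into itself: with $T[\phi]=CJ+L[\phi]$ and $L=Ca\int_0^x+Cb\int_0^y$, monotonicity gives $h\le T^n[h]=CJ\sum_{k<n}L^k[1]+L^n[h]$; if $h$ is bounded on the rectangle then $L^n[h]\le(\sup h)L^n[1]\to 0$, and one bounds $\sum_k L^k[1]$ by an exponential in $ax+by$ (e.g.\ via $\binom{p+q}{p}\le 2^{p+q}$). That argument is routine once set up correctly, and is presumably what \cite{K-R-09} has in mind.
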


\section{Initial Data for Region 1 and 2}\label{Section Initial Data in Short Pulse Regime}

Let $\widetilde{C}_{u_0}$ be a truncated light-cone defined by
\begin{equation*}
\widetilde{C}_{u_0} = \{p \in \mathbb{R}^{3+1} | u(p) = u_0,  u_0 \leq \ub(p) \leq \delta \},
\end{equation*}
and $C_{u_0}^{[0,\delta]}$ be a truncated light-cone defined by
\begin{equation*}
C_{u_0}^{[0,\delta]} = \{p \in \mathbb{R}^{3+1} | u(p) = u_0,  0 \leq \ub(p) \leq \delta \}.
\end{equation*}

First of all, we require that the data of \eqref{Main Equation} is trivial on $\widetilde{C}_{u_0} - C_{u_0}^{[0,\delta]}$, i.e.,
\begin{equation*}
 \varphi(x) \equiv 0, \quad \text{for all} \,\,\,\, x \in \widetilde{C}_{u_0} - C_{u_0}^{[0,\delta]}.
\end{equation*}
Therefore, according to the weak Huygens principle, the solution of \eqref{Main Equation} is zero in Region 1, i.e. the future domain of dependenc of $\widetilde{C}_{u_0} - C_{u_0}^{[0,\delta]}$. That is, $\varphi(x) \equiv 0$ if $ \ub(x) \leq 0$ and $u(x) \geq u_0$. In particular, $\varphi \equiv 0$ on $\Cb_0$ up to infinite order.\\

Secondly, we prescribe $\varphi$ on $C_{u_0}^{[0,\delta]}$ by
\begin{equation}\label{precise short pulse data on C_0}
 \varphi(\ub, u_0, \theta) = \frac{\delta^{\frac{1}{2}}}{|u_0|}\psi_0 (\frac{\ub}{\delta}, \theta),
\end{equation}
where $\psi_0: (0,1) \times \mathbb{S}^2 \rightarrow \mathbb{R}$ is a \emph{fixed} compactly supported smooth function with $L^2$ norm approximately $E_0$. The factor $\dfrac{1}{u_0}$ is natural because it manifests the correct decay for free waves.

The data given in the above form is called a \emph{short pulse data}, a name invented by Christodoulou in \cite{Ch-08}. In his work, he prescribes the shear (more precisely, the conformal geometry) of the initial null hypersurface in a similar form as \eqref{precise short pulse data on C_0}. The shear in the situation of \cite{Ch-08} is exactly the initial data for Einstein vacuum equation.

We remark that the above data is not a small data in the following sense: the derivative of the data can be extremely \emph{large} if $\delta$ is sufficiently small. In fact, this can be easily observed once we take $\dfrac{\partial}{\partial \ub}$ derivatives. We also remark that the energy flux of the data is approximately $E_0$ on $C_{u_0}^{[0,\delta]}$ which is far away from $0$.

In sequel, for most of the computations, we need commutator formulas and we collect all of them here. Denote Lie derivatives $D=\mathcal{L}_L$ and $\underline{D}=\mathcal{L}_{\underline{L}}$, then
\begin{equation}\label{commutates}
\begin{split}
[\mathcal{L}_\Omega, \nablaslash]&=0, \quad [D, \nablaslash]=0, \quad [\underline{D}, \nablaslash]=0,\\
[\Box, \Omega]&=0, \quad [D, \Omega]=0, \quad [\underline{D}, \Omega]=0,\\
[\Box, L]&=\frac{1}{r^2}(L-\Lb)+\frac{2}{r}\laplacianslash, \,\,\,[\Box, \Lb]=\frac{1}{r^2}(\Lb-L)-\frac{2}{r}\laplacianslash.
\end{split}
\end{equation}
If we commute $\Omega$ with \eqref{Main Equation} $n$ times, with respect to \eqref{commutates}, we have\footnote{ We shall ignore the numerical constants since they are irrelevant in the context.}
\begin{equation}\label{commute n Omega with main equation}
 \Box \Omega^{n} \varphi = \sum_{ p+q \leq n} Q(\nabla\Omega^{p}\phi, \nabla\Omega^{q}\phi),
\end{equation}
and these $Q$'s may be different. W commute $L, \Omega$ with \eqref{Main Equation} $n$ times, with respect to \eqref{commutates}, to derive
\begin{equation}\label{commute L Omega n with main equation}
\begin{split}
 \Box L \Omega^{n} \varphi &=  \sum_{p+q \leq n} Q(\nabla L\Omega^{p}\phi,
\nabla\Omega^{q}\phi) +\sum_{p+q \leq n}[L, Q](\nabla\Omega^{p}\phi, \nabla\Omega^{q}\phi) \\
 &\quad - \frac{1}{r^2}(\Lb \Omega^{n} \varphi -L \Omega^{n} \varphi)+\frac{2}{r}\laplacianslash \Omega^{n} \varphi.
\end{split}
\end{equation}
We commute $\Lb, \Omega$ with \eqref{Main Equation} $n$ times, with respect to \eqref{commutates}, to derive
\begin{equation}\label{commute Lb Omega n with main equation}
\begin{split}
 \Box \Lb \Omega^{n} \varphi &=  \sum_{p+q \leq n} Q(\nabla \Lb\Omega^{p}\phi,
\nabla\Omega^{q}\phi) +\sum_{p+q \leq n}[\Lb, Q](\nabla\Omega^{p}\phi, \nabla\Omega^{q}\phi) \\
 &\quad +\frac{1}{r^2}(\Lb \Omega^{n} \varphi -L \Omega^{n} \varphi)-\frac{2}{r}\laplacianslash \Omega^{n} \varphi.
\end{split}
\end{equation}
We remark, thanks to \eqref{commutates of L LB and Q}, we have the following pointwise estimates which gains a factor in $u$,
\begin{equation}\label{quadratic form bound}
\begin{split}
 &\quad |[L, Q](\nabla \phi, \nabla \psi)| + |[\Lb, Q](\nabla \phi, \nabla \psi)| \\
 &\lesssim \frac{1}{|u|}(|\nablaslash \phi||L \psi|+|\nablaslash \psi||L \phi| + |\nablaslash \phi||\Lb \psi| + |\nablaslash \psi||\Lb \phi|+ |L \phi||\Lb\psi|+ |L \psi||\Lb\phi|).
 \end{split}
\end{equation}

We now derive some preliminary estimates for the data on $C_{u_0}$. In view of \eqref{precise short pulse data on C_0}, by taking derivatives in $L$ or $\nablaslash$ direction, we have
\begin{equation*}
\begin{split}
 \|L\varphi\|_{L^{\infty}(C_{u_{0}})} &\leq \delta^{-\frac{1}{2}}u_{0}^{-1},\\
 \|\nablaslash\varphi\|_{L^{\infty}(C_{u_{0}})} &\leq \delta^{\frac{1}{2}} u_{0}^{-2}.
\end{split}
\end{equation*}
In fact, by taking $L$ or $\nablaslash$ derivatives consecutively, for $k \in \mathbb{Z}_{\geq 0}$, we obtain immediately,
\begin{equation}\label{L infinity on C0 for L or nablaslash directions}
\begin{split}
\|L\nablaslash^{k}\varphi\|_{L^{\infty}(C_{u_{0}})}
 &\lesssim_k \delta^{-\frac{1}{2}}u_{0}^{-k-1},\\
\|\nablaslash^{k+1}\varphi\|_{L^{\infty}(C_{u_{0}})} &\lesssim_k \delta^{\frac{1}{2}}u_{0}^{-k-2},\\
\|L^{2}\nablaslash^{k}\varphi\|_{L^{\infty}(C_{u_{0}})} &\lesssim_k \delta^{-\frac{3}{2}}u_{0}^{-k-1}.
\end{split}
\end{equation}
With the assistance of the original equation \eqref{Main Equation}, one can further derive the $L^\infty$ estimates for derivatives of $\varphi$ involving $\Lb$ directions. For this purpose, we first rewrite \eqref{Main Equation} in terms of null frame, namely,
\begin{equation}\label{Main Equation in null frame}\
\begin{split}
&\quad -L \Lb \varphi+\laplacianslash \varphi+\frac{1}{r}(L\varphi-\Lb\varphi)\\
&=2 Q^{34}\Lb\varphi  L\varphi  + 2 Q^{3a}\Lb \varphi \nablaslash_{a}\varphi + 2 Q^{4a} L\varphi  \nablaslash_{a}\varphi +Q^{ab}\nablaslash_{a} \varphi  \nablaslash_{b}\varphi.
\end{split}
\end{equation}

To estimate $\Lb \varphi$, we observe that \eqref{Main Equation in null frame} can be written as an ODE for $\Lb \varphi$ as follows, \footnote{ Since the exact numerical constants are irrelevant, we shall ignore the constants appearing in the coefficients.}
\begin{equation*}
 L (\Lb \varphi) = a \cdot \Lb \varphi+ b,
\end{equation*}
where
\begin{equation*}
\begin{split}
 a &= -(\frac{1}{r}+2Q^{34} L\varphi+2Q^{3a}\nablaslash_{a}\varphi),\\
  b  &= \frac{1}{r}L\varphi +\laplacianslash \varphi-2Q^{4a} L\varphi\nablaslash_{a}\varphi-Q^{ab} \nablaslash_{a} \varphi \nablaslash_{b}\varphi.
\end{split}
\end{equation*}
According to \eqref{L infinity on C0 for L or nablaslash directions}, we have,
\begin{equation*}
\begin{split}
\|a\|_{L^{\infty}(C_{u_{0}})}  &\lesssim \delta^{-\frac{1}{2}}u_{0}^{-1}, \\
\|b\|_{L^{\infty}(C_{u_{0}})}  &\lesssim \delta^{-\frac{1}{2}}u_{0}^{-2}.
\end{split}
\end{equation*}
We also have
\begin{equation*}
L |\Lb \varphi| \leq | L (\Lb \varphi)| \leq |a| \cdot |\Lb \varphi|+ |b|.
\end{equation*}
In view of the fact that $\Lb \varphi \equiv 0$ when $\ub =0$, by Gronwall's inequality (see Section \ref{Soboleve and Gronwall}), we have
\begin{equation*}
 |\Lb \varphi (u)| \lesssim \int_0^\delta e^{-A(\tau)} b(\tau)d\tau \lesssim \delta^{\frac{1}{2}}u_0^{-2}.
\end{equation*}
Hence,
 \begin{equation}\label{L infinity on Lb phi on C_0}
 \|\Lb\varphi\|_{L^{\infty}(C_{u_{0}})} \lesssim \delta^{\frac{1}{2}} u_{0}^{-2}.
 \end{equation}

To estimate $\Lb \nablaslash \varphi$, we first commute \eqref{Main Equation} with $\Omega$, that is, taking $n=1$ in \eqref{commute n Omega with main equation}. In null frame, we rewrite the equation as
 \begin{equation*}
-L\Lb\Omega\varphi+\laplacianslash\Omega \varphi +\frac{1}{r}(L\Omega\varphi-\Lb\Omega\varphi) = 2 Q_{1}(\nabla\Omega\varphi, \nabla\varphi)+ Q_{2}(\nabla\phi, \nabla\phi).
 \end{equation*}
We then proceed in a similar way as above, by Gronwall's inequality, we obtain
\begin{equation*}
\|\Lb\Omega\varphi\|_{L^{\infty}(C_{u_{0}})}\lesssim \delta^{\frac{1}{2}}u_{0}^{-2}.
\end{equation*}
Therefore, according to \eqref{Compare Omega and nablaslash}, we have
\begin{equation}\label{L infinity on Lb nablaslash phi on C_0}
\|\Lb\nablaslash\varphi\|_{L^{\infty}(C_{u_{0}})}\lesssim \delta^{\frac{1}{2}}u_{0}^{-3}.
 \end{equation}

Similarly, we can commute \eqref{Main Equation} with two and three $\Omega$'s and obtain
\begin{equation}\label{L infinity on Lb nablaslash nablaslash phi on C_0}
\begin{split}
\|\Lb\nablaslash^2 \varphi\|_{L^{\infty}(C_{u_{0}})}&\lesssim \delta^{\frac{1}{2}}u_{0}^{-4},\\
\|\Lb\nablaslash^3 \varphi\|_{L^{\infty}(C_{u_{0}})}&\lesssim \delta^{\frac{1}{2}}u_{0}^{-5}.
\end{split}
 \end{equation}

\begin{remark}[Key: Relaxation of the Estimates]
To obtain existence theorems for \eqref{Main Equation}, we have to derive certain estimates on $\varphi$ (as well as on its derivatives). Those estimates must be
valid on the initial hypersurface and they should propagate along the evolution to later null hypersurfaces. For this purpose, we shall use a slightly weaker version of estimates for $\nablaslash^k \varphi$ than those in \eqref{L infinity on C0 for L or nablaslash directions}, namely, the following estimates
\begin{equation*}
  \|\nablaslash^{k+1}\varphi\|_{L^{\infty}(C_{u_{0}})} \lesssim_k u_{0}^{-k-\frac{3}{2}}.
\end{equation*}
One expects it should be easier to prove the relaxed estimates propagating along the flow of \eqref{Main Equation} than the original ones in \eqref{L infinity on C0 for L or nablaslash directions}. This is the precisely relaxation of the propagation estimates mentioned in the introduction.
\end{remark}

To summarize, on the initial null hypersurface $C_{u_0}$, under the form of the short pulse data \eqref{precise short pulse data on C_0}, up to
fourth derivatives of $\varphi$ (this is the minimal number of derivatives we need to proceed an bootstrap argument, see next section), we have the following relaxed $L^\infty$ estimates,
\begin{equation}\label{L infinity on initial surface 1}
\begin{split}
 \|L\nablaslash^{k} \varphi\|_{L^{\infty}(C_{u_{0}})}  &\lesssim \delta^{-\frac{1}{2}}u_{0}^{-k-1}, \\
 \|\nablaslash^{k+1} \varphi\|_{L^{\infty}(C_{u_{0}})} &\lesssim u_{0}^{-\frac{3}{2}-k}, \\
 \|\Lb\nablaslash^{k}\varphi\|_{L^{\infty}(C_{u_{0}})}  &\lesssim \delta^{\frac{1}{2}}u_{0}^{-2-k}.
\end{split}
\end{equation}
for $k=0, 1, 2, 3,$ as well as
\begin{equation}\label{L infinity on initial surface 2}
\begin{split}
 \|L^{2}\varphi\|_{L^{\infty}(C_{u_{0}})} &\lesssim \delta^{-\frac{3}{2}}u_{0}^{-1}, \\
 \|L^{2}\nablaslash \varphi\|_{L^{\infty}(C_{u_{0}})} &\lesssim \delta^{-\frac{3}{2}}u_{0}^{-2},\\
  \|L^{2}\nablaslash^{2} \varphi\|_{L^{\infty}(C_{u_{0}})} &\lesssim \delta^{-\frac{3}{2}}u_{0}^{-3}.
\end{split}
\end{equation}

For wave equations, we expect the the information propagating along evolution should be more or less contained the energies of the solutions, i.e. $L^2$ norms of derivatives of $\varphi$. This heuristic leads us to consider the $L^2$ norms of the data on $C_{u_0}$.

According to the $L^\infty$ estimates in \eqref{L infinity on initial surface 1} and \eqref{L infinity on initial surface 2}, we obtain immediately the following $L^2$ estimates (oberve that the area of $C_{u_0}$ is comparable to $\delta \cdot u_0^2$),
\begin{equation}\label{L2 on initial surface 1}
\begin{split}
 \|L\nablaslash^{k} \varphi\|_{L^2(C_{u_{0}})}  &\lesssim u_{0}^{-k}, \\
 \|\nablaslash^{k+1} \varphi\|_{L^{2}(C_{u_{0}})}  &\lesssim \delta^{\frac{1}{2}}u_{0}^{-\frac{1}{2}-k},
 \end{split}
\end{equation}
for $k=0, 1, 2, 3,$ and
\begin{equation}\label{L2 on initial surface 2}
\begin{split}
 \|L^{2}\varphi\|_{L^{\infty}(C_{u_{0}})} &\lesssim \delta^{-1}, \\
 \|L^{2}\nablaslash \varphi\|_{L^{2}(C_{u_{0}})} &\lesssim \delta^{-1}u_{0}^{-1},\\
 \|L^{2}\nablaslash^{2} \varphi\|_{L^{2}(C_{u_{0}})} &\lesssim \delta^{-1}u_{0}^{-2}.
\end{split}
\end{equation}
We the remark that those $L^2$ estimates are also relaxed estimates. In next section, we shall show that, up to a universal constant, the estimates in \eqref{L2 on initial surface 1} and \eqref{L2 on initial surface 2} (the parameter $u_0$ will be replaced by $u$) will hold on all later outgoing null hypersurfaces $C_u$ where $u_0 \leq u \leq -1$ provided that the solution of \eqref{Main Equation} can be constructed up to $C_u$.

\section{An A priori Estimates up to Fourth Derivatives}\label{Section An A priori Estimates up to Third Derivatives}

This section is the technical heart of the paper. We assume that there exists a solution of \eqref{Main Equation} defined on the domain $\mathcal{D}_{u,\ub}$ which is enclosed by the null hypersurfaces $C_{u}$, $\Cb_{\ub}$, $C_{u_0}$ and $\Cb_0$. The goal is to show that estimates \eqref{L2 on initial surface 1} and \eqref{L2 on initial surface 2}, which are valid on $C_{u_0}$, also hold on $C_u$.\\

We slightly abuse the notations: we use $C_u$ to denote $C^{[0,\ub]}_u$ (i.e. $\ub' \in [0, \ub]$)and $\Cb_{\ub}$ to denote $\Cb^{[u_0,u]}_{\ub}$. We now define a family of energy norms as follows,
\begin{equation}
\begin{split}
E_1(u,\ub) &= \|L \varphi\|_{L^2(C_u)} + \delta^{-\frac{1}{2}} |u|^{\frac{1}{2}}\|\nablaslash \varphi\|_{L^2(C_u)},\\
\Eb_1(u,\ub) &=\|\nablaslash \varphi\|_{L^2(\Cb_{\ub})} + \delta^{-\frac{1}{2}} |u|^{\frac{1}{2}}\|\Lb \varphi\|_{L^2(\Cb_{\ub})},\\
E_2(u,\ub) &=|u|\|L \nablaslash \varphi\|_{L^2(C_u)} + \delta^{-\frac{1}{2}} |u|^{\frac{3}{2}}\|\nablaslash^2 \varphi\|_{L^2(C_u)},\\
\Eb_2(u,\ub) &=  \||u|\nablaslash^2 \varphi\|_{L^2(\Cb_{\ub})} + \delta^{-\frac{1}{2}} |u|^{\frac{1}{2}}\||u|\Lb \nablaslash \varphi\|_{L^2(\Cb_{\ub})},\\
E_3(u,\ub) &= |u|^2\|L \nablaslash^2 \varphi\|_{L^2(C_u)} + \delta^{-\frac{1}{2}} |u|^{\frac{5}{2}}\|\nablaslash^3 \varphi\|_{L^2(C_u)},\\
\Eb_3(u,\ub) &= \||u|^2 \nablaslash^3 \varphi\|_{L^2(\Cb_{\ub})} + \delta^{-\frac{1}{2}} |u|^{\frac{1}{2}}\||u|^{2}\Lb \nablaslash^2 \varphi\|_{L^2(\Cb_{\ub})},\\
E_4(u,\ub) &=|u|^3\|L \nablaslash^3 \varphi\|_{L^2(C_u)} + \delta^{-\frac{1}{2}} |u|^{\frac{7}{2}}\|\nablaslash^3 \varphi\|_{L^2(C_u)},\\
\Eb_4(u, \ub) &= \||u|^3 \nablaslash^4 \varphi\|_{L^2(\Cb_{\ub})} + \delta^{-\frac{1}{2}} |u|^{\frac{1}{2}}\||u|^{3}\Lb \nablaslash^3 \varphi\|_{L^2(\Cb_{\ub})}.
\end{split}
\end{equation}
We also need another family of norms which involves at least two null derivatives. They are defined as follows,
\begin{equation}
\begin{split}
F_2(u,\ub) &=\delta\|L^2 \varphi\|_{L^2(C_u)}, \\
\Fb_2(u, \ub) &=|u|^{\frac{1}{2}} \|\Lb^2 \varphi\|_{L^2(\Cb_{\ub})},\\
F_3(u,\ub) &=\delta|u| \|L^2 \nablaslash \varphi\|_{L^2(C_u)},\\
\Fb_3(u,\ub) &=|u|^{\frac{1}{2}} \||u| \Lb^2 \nablaslash \varphi\|_{L^2(\Cb_{\ub})},\\
F_4(u,\ub) &=\delta|u|^{2} \|L^2 \nablaslash^{2} \varphi\|_{L^2(C_u)}, \\
\Fb_4(u,\ub) &=|u|^{\frac{1}{2}} \||u|^{2} \Lb^2 \nablaslash^{2} \varphi\|_{L^2(\Cb_{\ub})}.
\end{split}
\end{equation}
We shall prove the following propagation estimates,
\begin{Main A priori Estimates}\label{a priori estimates} If $\delta$ is sufficiently small, for all initial data of \eqref{Main Equation} and all $I_4 \in \mathbb{R}_{>0}$ which satisfy
\begin{equation}\label{initial bound}
\begin{split}
&\quad E_1(u_0,\delta) + E_2(u_0,\delta)+E_3(u_0,\delta)+E_4(u_0,\delta) \\
&+ F_2(u_0,\delta) + F_3(u_0,\delta)+F_4(u_0,\delta) \leq I_4,
\end{split}
\end{equation}
there is a constant $C(I_4)$ depending only on $I_4$ (in particular, not on $\delta$ and $u_0$), so that
\begin{equation}\label{main estimates}
\sum_{i=1}^4 [E_i(u,\ub) + \Eb_i(u,\ub)]+\sum_{j=2}^4 [F_j(u,\ub) + \Fb_j(u,\ub)] \leq C(I_4).
\end{equation}
The subindex $4$ in $I_4$ denotes the number of derivatives used in the energy norms.
\end{Main A priori Estimates}

\subsection{Bootstrap Argument}
We will perform a standard bootstrap argument to prove the \textbf{Main A priori Estimates}. We assume that
\begin{equation}\label{bootstrap assumption}
\sum_{i=1}^4 [E_i(u',\ub') + \Eb_i(u',\ub')]+\sum_{j=2}^4 [F_j(u',\ub') + \Fb_j(u',\ub')] \leq M,
\end{equation}
for all $u' \in [u_0, u]$ and $\ub' \in [0, \ub]$, where $M$ is a sufficiently large constant. Since we have assumed the existence of the solution up to  $C_{u}$ and $\Cb_{\ub}$, we can always choose such a $M$ which may depend on the $\varphi$. At the end of the current section, we will show that we can actually choose $M$ in such a way that it depends only on the norm of the initial data but not the profile $\varphi$. This will yield the \textbf{Main A priori Estimates}.

\subsection{Preliminary Estimates}\label{section Preliminary Estimates}
Under the bootstrap assumption \eqref{bootstrap assumption}, we first derive $L^\infty$ for one derivatives of $\varphi$. As a byproduct, we will also obtain the $L^4$ estimates for two derivatives of $\varphi$. For this purpose, we shall repeatedly use the Sobolev inequalities stated in Section \ref{Soboleve and Gronwall}.

We start with $L \varphi$. According to Sobolev inequalities, we have
\begin{align*}
 |u|^{\frac{1}{2}}\| L\varphi \|_{L^{4}(S_{\ub,u})} &\lesssim  \| L^{2}\varphi\|^{\frac{1}{2}}_{L^{2}(C_{u})}(\| L\varphi \|^{\frac{1}{2}}_{L^{2}(C_{u})}
 +|u|^{\frac{1}{2}}\| L\nablaslash\varphi \|_{L^{2}(C_{u})}^{\frac{1}{2}})\\
 &\lesssim (\delta^{-1} M )^\frac{1}{2}(M^\frac{1}{2}+ |u|^\frac{1}{2}(|u|^{-1}M)^\frac{1}{2}).
 \end{align*}
Hence,
\begin{equation}\label{e 1}
\| L\varphi \|_{L^{4}(S_{\ub,u})}\lesssim \delta^{-\frac{1}{2}}|u|^{-\frac{1}{2}} M.
\end{equation}
Similarly, we have
\begin{align*}
 |u|^{\frac{1}{2}}\| L \nablaslash \varphi \|_{L^{4}(S_{\ub,u})} &\lesssim  \| L^{2}\nablaslash\varphi\|^{\frac{1}{2}}_{L^{2}(C_{u})}(\| L \nablaslash\varphi \|^{\frac{1}{2}}_{L^{2}(C_{u})}
 +|u|^{\frac{1}{2}}\| L{\nablaslash}^2\varphi \|_{L^{2}(C_{u})}^{\frac{1}{2}})\\
 &\lesssim (\delta^{-1} |u|^{-1} M )^\frac{1}{2}((|u|^{-1} M)^\frac{1}{2}+ |u|^\frac{1}{2}(|u|^{-2}M)^\frac{1}{2}).
\end{align*}
Thus,
\begin{equation}\label{e_2}
\| L\nablaslash \varphi \|_{L^{4}(S_{\ub,u})} \lesssim \delta^{-\frac{1}{2}}|u|^{-\frac{3}{2}}M.
\end{equation}
Combine \eqref{e 1} and \eqref{e_2}, we have
\begin{equation}\label{e_3}
\begin{split}
 \|L\varphi\|_{L^\infty}&\lesssim  |u|^{-\frac{1}{2}}\| L\varphi \|_{L^{4}(S_{\ub,u})}
 +|u|^{\frac{1}{2}}\| L\nablaslash\varphi \|_{L^{4}(S_{\ub,u})}\\
 &\lesssim \delta^{-\frac{1}{2}}|u|^{-1}M.
 \end{split}
 \end{equation}

We now treat $\nablaslash \varphi$. According to Sobolev inequalities, we have
\begin{equation*}
\begin{split}
 |u|^{\frac{1}{2}}\| \nablaslash \varphi \|_{L^{4}(S_{\ub,u})} &\lesssim  \| L \nablaslash\varphi\|^{\frac{1}{2}}_{L^{2}(C_{u})}(\| \nablaslash \varphi \|^{\frac{1}{2}}_{L^{2}(C_{u})}
 +|u|^{\frac{1}{2}}\| \nablaslash^2 \varphi \|_{L^{2}(C_{u})}^{\frac{1}{2}})\\
 &\lesssim (|u|^{-1}M )^\frac{1}{2}((\delta^{\frac{1}{2}}|u|^{-\frac{1}{2}}M)^\frac{1}{2}+ |u|^\frac{1}{2}(\delta^{\frac{1}{2}} |u|^{-\frac{3}{2}}M)^\frac{1}{2}).
\end{split}
\end{equation*}
Thus,
\begin{equation}\label{e_4}
\| \nablaslash\varphi\|_{L^{4}(S_{\ub,u})}\lesssim \delta^{\frac{1}{4}}|u|^{-\frac{5}{4}}M.
\end{equation}
Similarly,
\begin{equation}\label{e_5}
\| \nablaslash^2 \varphi\|_{L^{4}(S_{\ub,u})}\lesssim \delta^{\frac{1}{4}}|u|^{-\frac{9}{4}}M.
\end{equation}
Combine \eqref{e_4} and \eqref{e_5}, we have
\begin{equation}\label{e_6}
\begin{split}
 \|\nablaslash \varphi\|_{L^\infty} &\lesssim  |u|^{-\frac{1}{2}}\| \nablaslash\varphi \|_{L^{4}(S_{\ub,u})}
 +|u|^{\frac{1}{2}}\| \nablaslash^2 \varphi \|_{L^{4}(S_{\ub,u})}\\
 &\lesssim \delta^{\frac{1}{4}}|u|^{-\frac{7}{4}}M.
 \end{split}
 \end{equation}

It remains to estimate $\Lb\varphi$. For $\|\Lb \varphi\|_{L^4}$, according to \eqref{Sobolev Lb}, we have
\begin{equation*}
 \|\Lb\varphi\|_{L^{4}(S_{\ub,u})} \lesssim \|\Lb\varphi\|_{L^{4}(S_{\ub,u_{0}})}+
 \|\Lb^2 \varphi \|_{L^{2}(\Cb_{\ub})}^{\frac{1}{2}}(\| |u|^{-1}\Lb\varphi \|_{L^{2}(\Cb_{\ub})}^{\frac{1}{2}}+\| \Lb\nablaslash\varphi \|_{L^{2}(\Cb_{\ub})}^{\frac{1}{2}}).
 \end{equation*}
Since first three terms are appearing in the bootstrap assumptions (or on the initial hypersurface $H_{u_0}$), we can control them exactly as before. For the last term, we can restrict the inequality on the part of $\Cb_{\ub}$ where the affine parameter $u'$ of $\Lb$ is in the interval $[u_0, u]$. Thus, we have
\begin{equation*}
\| |u'|^{-1}\Lb\varphi(u',\ub,\theta) \|_{L^{2}(\Cb_{\ub})} \leq |u|^{-1}\| \Lb\varphi \|_{L^{2}(\Cb_{\ub})}.
\end{equation*}
The righthand side is again a term in \eqref{bootstrap assumption}. This allows us to derive
\begin{equation}\label{e_7}
\| \Lb\varphi \|_{L^{4}(S_{\ub,u})} \lesssim \delta^{\frac{1}{4}}|u|^{-1}M.
\end{equation}
Similarly, we can derive
\begin{equation}\label{e_8}
\| \Lb\nablaslash\varphi \|_{L^{4}(S_{\ub,u})}\lesssim \delta^{\frac{1}{4}}|u|^{-2}M.
\end{equation}
We combine \eqref{e_7} and \eqref{e_8} to derive
\begin{equation}\label{e_9}
\| \Lb\varphi\|_{L^{\infty}} \lesssim \delta^{\frac{1}{4}}|u|^{-\frac{3}{2}}M.
\end{equation}
In the same way, we can derive $L^4, L^{\infty}$ estimates of two derivatives.
We summarize all the estimates in the following proposition.
\begin{proposition}\label{proposition L infinity and L4 estimates} Under the bootstrap assumption \eqref{bootstrap assumption}, we have
\begin{equation*}
\begin{split}
 &\quad \delta^{\frac{1}{2}}|u|\|L\varphi\|_{L^\infty} + \delta^{-\frac{1}{4}}|u|^{\frac{7}{4}}\|\nablaslash\varphi\|_{L^\infty} +  \delta^{-\frac{1}{4}}|u|^{\frac{3}{2}}\|\Lb\varphi\|_{L^\infty}\\
 &+\delta^{\frac{1}{2}}|u|^{2}\|L\nablaslash\varphi\|_{L^\infty} + \delta^{-\frac{1}{4}}|u|^{\frac{11}{4}}\|\nablaslash^{2}\varphi\|_{L^\infty} +  \delta^{-\frac{1}{4}}|u|^{\frac{5}{2}}\|\Lb\nablaslash\varphi\|_{L^\infty} \lesssim M\\
&\quad\delta^{\frac{1}{2}}|u|^{\frac{5}{2}}\| L\nablaslash^2 \varphi \|_{L^{4}(S_{\ub,u})} +\delta^{-\frac{1}{4}}|u|^{\frac{13}{4}} \|\nablaslash^3 \varphi\|_{L^{4}(S_{\ub,u})} + \delta^{-\frac{1}{4}}|u|^{3}\|\Lb\nablaslash^2\varphi \|_{L^{4}(S_{\ub,u})}\\
&+\delta^{\frac{1}{2}}|u|^{\frac{3}{2}}\| L\nablaslash \varphi \|_{L^{4}(S_{\ub,u})} +\delta^{-\frac{1}{4}}|u|^{\frac{9}{4}} \|\nablaslash^2 \varphi\|_{L^{4}(S_{\ub,u})} + \delta^{-\frac{1}{4}}|u|^{2}\|\Lb\nablaslash\varphi \|_{L^{4}(S_{\ub,u})}\\
&+\delta^{\frac{1}{2}}|u|^{\frac{1}{2}}\| L\varphi \|_{L^{4}(S_{\ub,u})} +\delta^{-\frac{1}{4}}|u|^{\frac{5}{4}} \|\nablaslash \varphi\|_{L^{4}(S_{\ub,u})} + \delta^{-\frac{1}{4}}|u|\|\Lb\varphi \|_{L^{4}(S_{\ub,u})}\lesssim M.
\end{split}
\end{equation*}
\end{proposition}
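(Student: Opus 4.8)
The plan is to obtain every bound in the proposition by feeding the bootstrap assumption \eqref{bootstrap assumption} into the three Sobolev inequalities of Section \ref{Soboleve and Gronwall}, in exactly the pattern already used above for the one-derivative quantities. Indeed, the one-derivative entries of the proposition are literally the estimates \eqref{e 1}, \eqref{e_2}, \eqref{e_4}, \eqref{e_5}, \eqref{e_7}, \eqref{e_8} together with their combined $L^\infty$ forms \eqref{e_3}, \eqref{e_6}, \eqref{e_9}; so what remains is to run the same three steps for the two-derivative quantities $L\nablaslash\varphi$, $\nablaslash^2\varphi$, $\Lb\nablaslash\varphi$ and the three-derivative quantities $L\nablaslash^2\varphi$, $\nablaslash^3\varphi$, $\Lb\nablaslash^2\varphi$.

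For the quantities carrying only $L$ and angular derivatives I would apply the first Sobolev inequality on $C_u$, inserting the $L^2(C_u)$ bounds read off directly from the energy norms — $\|L\nablaslash\varphi\|_{L^2(C_u)}\lesssim|u|^{-1}M$ and $\|\nablaslash^2\varphi\|_{L^2(C_u)}\lesssim\delta^{1/2}|u|^{-3/2}M$ from $E_2$, $\|L^2\nablaslash\varphi\|_{L^2(C_u)}\lesssim\delta^{-1}|u|^{-1}M$ from $F_3$, and the analogous bounds for $L\nablaslash^2\varphi$, $\nablaslash^3\varphi$ from $E_3$, $E_4$, $F_4$ — and then tracking powers of $\delta$ and $|u|$ to reach the claimed $L^4(S_{\ub,u})$ bounds; the $S_{\ub,u}$ Sobolev inequality then upgrades the appropriate pairs to the $L^\infty$ bounds in the first block. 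For the $\Lb$-differentiated quantities I would instead use \eqref{Sobolev Lb} on $\Cb_{\ub}$, where $\|\Lb\nablaslash\varphi\|_{L^2(\Cb_{\ub})}$, $\|\Lb^2\varphi\|_{L^2(\Cb_{\ub})}$, and their higher analogues sit inside $\Eb_1$, $\Eb_2$, $\Eb_3$ and $\Fb_2$, $\Fb_3$, $\Fb_4$.

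The step that is structural rather than purely mechanical is the $\Lb$-derivative one, for two reasons. First, the intermediate factor in \eqref{Sobolev Lb} is $\||u|^{-1}\Lb(\cdots)\|_{L^2(\Cb_{\ub})}$, which is not literally a bootstrap norm; the remedy is the one used to prove \eqref{e_7}: restrict the inequality to the portion of $\Cb_{\ub}$ on which the affine parameter $u'$ lies in $[u_0,u]$ and estimate $\||u'|^{-1}\Lb(\cdots)\|_{L^2(\Cb_{\ub})}\le|u|^{-1}\|\Lb(\cdots)\|_{L^2(\Cb_{\ub})}$, the right side now being controlled by $\Eb_1$ (respectively $\Eb_2$, $\Eb_3$). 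Second, the $|u|$-weights built into $\Eb_i$ and $\Fb_j$ vary along $\Cb_{\ub}$, so on the same restricted range one extracts the worst weight $|u|^{-k}$ and absorbs it into the stated exponent; and the boundary term $\|\Lb\nablaslash^j\varphi\|_{L^4(S_{\ub,u_0})}$ on the initial cone is dominated by the explicit data estimates \eqref{L infinity on initial surface 1}, hence harmless and $\delta$-independent after multiplication by the prefactors of the proposition.

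I do not expect any real obstacle beyond this bookkeeping: the whole content is to check that each insertion of a bootstrap bound lands on precisely the power of $\delta$ and $|u|$ asserted, together with the small observation above about restricting the $\Cb_{\ub}$-integrals in the $\Lb$-derivative case. The one thing to keep straight throughout is the system of weights — the $\delta^{-1/2}|u|^{1/2}$ prefactors in the definitions of $E_i$, $\Eb_i$ and the $\delta$, $|u|$ weights in $F_j$, $\Fb_j$ — which encode that $L$-derivatives are the bad (size $\delta^{-1/2}$) components while angular and $\Lb$ derivatives are good; getting these weights to match is exactly what makes the final exponents in the proposition uniform in $\delta$ and $u_0$.
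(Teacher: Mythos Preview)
Your proposal is correct and matches the paper's approach exactly: the paper carries out the $L^4$ and $L^\infty$ bounds for $L\varphi$, $\nablaslash\varphi$, $\Lb\varphi$ via the three Sobolev inequalities (including the restriction trick on $\Cb_{\ub}$ for the $|u'|^{-1}$ weight), and then simply states ``In the same way, we can derive $L^4, L^{\infty}$ estimates of two derivatives,'' which is precisely the bookkeeping you have outlined for the remaining entries.
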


We observe that $L^\infty$ estimates on $\Lb \varphi$ (which of order $\delta^{\frac{1}{4}} |u|^{-\frac{3}{2}}$) is certainly worse than the initial estimates of $\Lb\varphi$ on $C_{u_0}$ (which is of order $\delta^{\frac{1}{2}} |u_0|^{-2}$). To rectify this loss, we derive a $L^2$ estimates of $\Lb \varphi$ on $C_u$ (instead of $\Cb_{\ub}$ appearing in the definition of $\Eb_1(u,\ub)$).

\begin{lemma}\label{lemma L2 of Lb phi}
 Under the bootstrap assumption \eqref{bootstrap assumption}, if $\delta^{\frac{1}{2}} M$  is sufficiently small, we have
 \begin{equation*}
\|\Lb\varphi\|_{L^2(C_u)} \lesssim \delta|u|^{-1}M.
\end{equation*}
\end{lemma}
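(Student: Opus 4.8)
The plan is to integrate the wave equation in null form along the outgoing null direction $L$ to recover $\Lb\varphi$ as a solution of an ODE, just as was done on $C_{u_0}$ in Section~\ref{Section Initial Data in Short Pulse Regime}. Recall that \eqref{Main Equation in null frame} can be rewritten as
\begin{equation*}
L(\Lb\varphi) = -\frac{1}{r}\Lb\varphi + \frac{1}{r}L\varphi + \laplacianslash\varphi + (\text{null form terms}).
\end{equation*}
Since $\Lb\varphi\equiv 0$ on $\Cb_0$ (the data vanishes there to infinite order), for a fixed outgoing null geodesic we integrate from $\ub'=0$ and obtain, schematically, $|\Lb\varphi(u,\ub)| \lesssim \int_0^{\ub}\bigl(\frac{1}{|u|}|L\varphi| + |\laplacianslash\varphi| + \text{null terms}\bigr)\,d\ub'$ (the $\frac{1}{r}\Lb\varphi$ term is handled by Gronwall and contributes only a bounded multiplicative factor since $\int_0^\delta \frac{1}{r}\,d\ub' \lesssim 1$).

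Next I would take the $L^2(C_u)$ norm of this pointwise bound. The key point is that the $\ub'$-integration is over an interval of length at most $\delta$, so by Minkowski's (or Cauchy--Schwarz's) inequality in $\ub'$ we gain a factor of $\delta$ (or $\delta^{\frac12}$) when passing from the integrand's $L^2(C_u)$ norm to that of $\Lb\varphi$. Concretely, $\|\Lb\varphi\|_{L^2(C_u)} \lesssim \delta\,\sup_{\ub'}\bigl\|\frac{1}{|u|}L\varphi + \laplacianslash\varphi + \cdots\bigr\|_{L^2(S_{\ub',u})}$-type estimates, or more carefully $\|\Lb\varphi\|_{L^2(C_u)}^2 \lesssim \delta \int_0^\delta \|\cdots\|_{L^2(S_{\ub',u})}^2\,d\ub'$. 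The linear terms are controlled by the energy norms: $\frac{1}{|u|}\|L\varphi\|_{L^2(C_u)} \lesssim |u|^{-1}E_1 \lesssim |u|^{-1}M$ and $\|\laplacianslash\varphi\|_{L^2(C_u)} \lesssim |u|^{-2}\|\Omega^2\varphi\|$-type bounds $\lesssim \delta^{\frac12}|u|^{-2}M$ via $E_2$ and \eqref{Compare Omega and nablaslash}; after multiplying by $\delta$ these are of size $\delta|u|^{-1}M$ and better. For the null-form quadratic terms, I would use \eqref{null form bound} together with the $L^\infty$ estimates of Proposition~\ref{proposition L infinity and L4 estimates}: the worst term is $L\varphi\cdot\Lb\varphi$, but $\|L\varphi\|_{L^\infty}\lesssim \delta^{-\frac12}|u|^{-1}M$ multiplied into $\|\Lb\varphi\|_{L^2(C_u)}$ produces a term $\delta^{-\frac12}|u|^{-1}M\cdot\|\Lb\varphi\|_{L^2(C_u)}$, which after the overall $\delta$ factor becomes $\delta^{\frac12}M\cdot\|\Lb\varphi\|_{L^2(C_u)}$; the other null-form products pair a bad term with a genuinely small good term ($\nablaslash\varphi$ or $\Lb\varphi$) and are strictly better.

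The main obstacle is precisely this self-referential term $L\varphi\cdot\Lb\varphi$ in the quadratic nonlinearity, which reproduces $\|\Lb\varphi\|_{L^2(C_u)}$ on the right-hand side with coefficient $\sim\delta^{\frac12}M$. This is why the hypothesis ``$\delta^{\frac12}M$ is sufficiently small'' appears: once $\delta^{\frac12}M < \tfrac12$ (up to the implied constant), this term can be absorbed into the left-hand side, leaving $\|\Lb\varphi\|_{L^2(C_u)} \lesssim \delta|u|^{-1}M$ from the linear contributions and the remaining (better) quadratic contributions. One subtlety to watch is the Gronwall step for the $\frac{1}{r}\Lb\varphi$ linear term: on $C_u$ with $u_0\le u\le -1$ one has $r = \ub - u = |u| + \ub \ge |u| \ge 1$, so $\int_0^\delta \frac{1}{r}\,d\ub' \le \delta/|u| \lesssim \delta$, hence $e^{A}$ is uniformly bounded and does not affect the final power of $\delta$. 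A second point is that the estimate is stated on $C_u$ rather than on $\Cb_{\ub}$, which is convenient because the integration variable $\ub'$ and the base point match up cleanly; no area-factor juggling between $C_u$ and $\Cb_{\ub}$ is needed. Altogether, collecting the linear bound $\delta|u|^{-1}M$, the absorbable self-term, and the better quadratic remainders yields $\|\Lb\varphi\|_{L^2(C_u)}\lesssim \delta|u|^{-1}M$.
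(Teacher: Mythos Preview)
Your proposal is correct and follows essentially the same strategy as the paper. Both arguments rewrite \eqref{Main Equation in null frame} as a transport equation for $\Lb\varphi$ along $L$, use the vanishing of $\Lb\varphi$ on $\Cb_0$, exploit the short $\ub$-interval $[0,\delta]$ to gain the factor $\delta$, and invoke the smallness of $\delta^{\frac12}M$ to absorb the self-referential contribution from $L\varphi\cdot\Lb\varphi$.

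The only organizational difference is that the paper multiplies the equation by $\Lb\varphi$ and integrates over $C_u^{\ub}$ (an $L^2$ energy identity for the transport equation, leading to a differential inequality for $f(\ub)^2=\int_{C_u^{\ub}}|\Lb\varphi|^2$), whereas you apply Gronwall pointwise along each null generator and then take the $L^2(C_u)$ norm afterwards. Both routes are standard for transport estimates and yield the same bound; your version requires the pointwise $L^\infty$ control of $L\varphi$ from Proposition~\ref{proposition L infinity and L4 estimates} to bound the Gronwall exponent, which you correctly identify. One small point worth making explicit in your write-up: when you pass from the pointwise bound on $|\Lb\varphi|$ to $\|\Lb\varphi\|_{L^2(C_u)}$, the spheres $S_{\ub',u}$ for $\ub'\in[0,\delta]$ all have radius $|u|+O(\delta)$, so their area elements are uniformly comparable and the Cauchy--Schwarz step in $\ub'$ goes through cleanly.
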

\begin{proof} We multiply $\Lb\varphi$ on both side of the main equation \eqref{Main Equation in null frame} and integrate on $C_{u}$. In view of the fact that $\Lb\varphi \equiv 0$ on $S_{0, u}$ as well as \eqref{null form bound}, this leads to
 \begin{equation}\label{aa}
 \begin{split}
\int_{S_{\underline{u},u}}\!\!\!|\underline{L}\varphi|^{2}&\lesssim \int_{C_{u}^{\underline{u}}}\frac{1}{r}|L\varphi||\underline{L}\varphi|+ |\laplacianslash\varphi||\underline{L}\varphi|
+|\nablaslash\varphi||\underline{L}\varphi|^{2}\\
& +\int_{C_{u}^{\underline{u}}} |L\varphi||\underline{L}\varphi|^{2}+ |L\varphi| |\nablaslash\varphi||\underline{L}\varphi|+ |\nablaslash\varphi|^2|\underline{L}\varphi|,
\end{split}
\end{equation}
where the integral $ \int_{C_{u}^{\underline{u}}}$ means $\int_{0}^{\ub}\int_{S_{\ub',u}} d\ub'$. Let
$f^2(\ub) = \int_{C_{u}^{\underline{u}}}(\underline{L}\varphi)^{2}.$
We now estimate the terms at the right hand side of \eqref{aa} one by one. We have, for the first two terms
\begin{equation*}
 \begin{split}
\int_{C_{u}^{\underline{u}}}\frac{1}{r}|L\varphi||\underline{L}\varphi| &\lesssim |u|^{-1}f(\ub)M, \\
 \int_{C_{u}^{\underline{u}}}|\laplacianslash\varphi||\underline{L}\varphi| &\lesssim  \delta^{\frac{1}{2}}|u|^{-\frac{3}{2}}f(\ub)M.
 \end{split}
\end{equation*}
For the second two terms, we have
\begin{equation*}
 \begin{split}
\int_{C_{u}^{\underline{u}}} |\nablaslash\varphi||\underline{L}\varphi|^{2} &\lesssim \delta^{\frac{1}{4}}|u|^{-\frac{7}{4}}f^{2}(\ub)M, \\ \int_{C_{u}^{\underline{u}}}|L\varphi||\underline{L}\varphi|^{2}&\lesssim  \delta^{-\frac{1}{2}}|u|^{-1}f^2(\ub) M.
\end{split}
\end{equation*}
For the last two terms, we have
\begin{equation*}
\begin{split}
 \int_{C_{u}^{\underline{u}}}|L\varphi|| \nablaslash\varphi||\underline{L}\varphi| &\lesssim \delta^{\frac{1}{4}}|u|^{-\frac{7}{4}}f(\ub) M, \\ \int_{C_{u}^{\underline{u}}}|\nablaslash\varphi|^2|\underline{L}\varphi| &\lesssim \delta^{\frac{3}{4}}|u|^{-\frac{9}{4}}f(\ub) M.
 \end{split}
\end{equation*}
Back to \eqref{aa}, we have
\begin{equation*}
\frac{d}{d\underline{u}}f(\ub)^{2}\lesssim M(\delta^{-\frac{1}{2}}|u|^{-1}f(\ub)^{2}+|u|^{-1}f(\ub)),
 \end{equation*}
We then integrate on $C_{u}$ to derive
\begin{equation*}
f(\ub) \lesssim \frac{M}{|u|} \delta^{\frac{1}{2}}f(\ub)+\delta\frac{M}{|u|}.
 \end{equation*}
Since $\delta^{\frac{1}{2}} M$  is sufficiently small, this completes the proof
\end{proof}

\subsection{Energy Estimates for $E_{k}(u,\ub)$ and $\Eb_{k}(u,\ub)$  when $k \leq 3$}

Commute $\Omega^i$ with \eqref{Main Equation}, in view of \eqref{commute n Omega with main equation}. We use the scheme in Section \ref{Energy estimates scheme} for this equation where we take $\phi = \Omega^i \varphi, i=0, 1, 2$ and $X = L$. In view of \eqref{fundamental energy identity}, we have
\begin{equation}\label{ES-E1-E2-E3-a}
\begin{split}
\int_{C_{u}}|L \Omega^i \varphi|^{2}&+\int_{\underline{C}_{\underline{u}}}|\nablaslash \Omega^i \varphi|^{2} =\int_{C_{u_{0}}}|L\Omega^i \varphi|^{2}+\doubleint_{\mathcal{D}}Q(\nabla\Omega^{i}\phi, \nabla\phi)L \Omega^i \varphi \\
& + \doubleint_{\mathcal{D}}\sum_{p+q \leq i, p\neq i, q\neq i}
Q(\nabla\Omega^{p}\phi, \nabla\Omega^{q}\phi)L \Omega^i\varphi +\doubleint_{\mathcal{D}} \frac{1}{r} \Lb \Omega^i \varphi \cdot L \Omega^i\varphi .
\end{split}
\end{equation}
At this point, we rewrite the above equations as
\begin{equation*}
\begin{split}
\int_{C_{u}}|L \Omega^i \varphi|^{2}&+\int_{\underline{C}_{\underline{u}}}|\nablaslash \Omega^i \varphi|^{2} =\int_{C_{u_{0}}}|L\Omega^i \varphi|^{2}+ R + S +T.
\end{split}
\end{equation*}
where $R$, $S$ and $T$ are defined in an obvious way. Before deriving the estimates, we remark that for any function $\phi$, we actually have
\begin{equation*}
 \|\Omega^i\phi\|_{L^p(S_{\ub,u})} \sim \||u|^i|\nablaslash^i\phi|\|_{L^p(S_{\ub,u})},
\end{equation*}
which can be easily derived from \eqref{Compare Omega and nablaslash}.

Let us first consider $R$, i.e. the second integral terms on the right hand side of all equations \eqref{ES-E1-E2-E3-a}, in view of \eqref{null form bound}, it splits into
\begin{equation*}
\begin{split}
R_1 &= \doubleint_{\mathcal{D}}|\Lb\varphi||L\Omega^i \varphi|^2, \\
R_2 &= \doubleint_{\mathcal{D}}|\nablaslash \varphi||L\Omega^i \varphi|^2, \\
R_3 &= \doubleint_{\mathcal{D}}|L\varphi||\Lb\Omega^i\varphi||L\Omega^i \varphi|,\\
R_4 &= \doubleint_{\mathcal{D}}|L \varphi||\nablaslash\Omega^i\varphi||L\Omega^i \varphi|, \\
R_5 &= \doubleint_{\mathcal{D}}|\nablaslash\varphi||\Lb\Omega^i \varphi||L\Omega^i \varphi|,\\
R_6 &= \doubleint_{\mathcal{D}}(|\Lb\varphi|+|\nablaslash \varphi|)|\nablaslash\Omega^i\varphi||L\Omega^i \varphi|,
\end{split}
\end{equation*}
where $i=0,1, 2.$
Now we bound those terms one by one.

For $R_1$, we have
\begin{align*}
R_1 &\leq \int_{u_0}^{u}\|\Lb\varphi\|_{L^{\infty}} (\int_{C_{u'}} |L\Omega^i\varphi|^2 )du'\\
 &\lesssim \int_{u_0}^{u}\delta^{\frac{1}{4}}|u'|^{-\frac{3}{2}}M M^2 du' \\
 &\lesssim \delta^{\frac{1}{4}}|u|^{-\frac{1}{2}}M^3.
\end{align*}
For $R_2$, we bound $\nablaslash \varphi$ in $L^\infty$ then proceed exactly as above. This bound is better than $R_1$'s and we shall use the worse one,
\begin{equation*}
R_2 \lesssim \delta^{\frac{1}{4}}|u|^{-\frac{1}{2}}M^3.
\end{equation*}
For $R_3$, we have
\begin{align*}
 R_3 &\lesssim (\doubleint_{\mathcal{D}}|L\varphi|^2|L\Omega^i\varphi|^2)^{\frac{1}{2}}(\doubleint_{\mathcal{D}}|\Lb\Omega^i \varphi|^2)^{\frac{1}{2}} \\
&= (\int_{u_0}^{u}\|L\varphi\|^2_{L^{\infty}} \|L\Omega^i\varphi\|^2_{L^2(C_{u'})} du')^{\frac{1}{2}}(\int_{0}^{\ub}\||u|^i\Lb\nablaslash^i \varphi\|_{L^2(\Cb_{\ub'})}^2 d\ub')^{\frac{1}{2}}\\
&\lesssim \delta^{\frac{1}{2}}|u|^{-1}M^3.
\end{align*}
For $R_4$, since $i \leq 4$, we can use $L^4$ estimates in Proposition \ref{proposition L infinity and L4 estimates}. This is an easy but important observation since we are dealing with terms with lower (less that 4) order derivatives. And for the highest order derivative terms, we can not use this approach. We then have (note that we bound $\nablaslash\Omega^i\varphi$ by $L^4$ instead of $L^2$ to gain $\delta^\frac{1}{4}$.)
\begin{equation*}
\begin{split}
  R_4 &\leq \int_{u_0}^{u}\|L\varphi\|_{L^{2}(C_{u'})} \|\nablaslash\Omega^i\varphi\|_{L^4(C_{u'})}  \|L\Omega^i\varphi\|_{L^4(C_{u'})} du'\\
  &\lesssim \delta^{\frac{1}{4}}|u|^{-\frac{3}{4}}M^3.
  \end{split}
\end{equation*}
For $R_5$ and $R_6$, they can be bounded exactly in the same way as $R_4$, thus,
\begin{align*}
R_5  &\lesssim \delta^{\frac{1}{2}}|u|^{-\frac{5}{4}}M^3, \\
R_6  &\lesssim \delta^{\frac{1}{4}}|u|^{-\frac{3}{4}}M^3.
\end{align*}

We now move to $S$, i.e. the third integral terms on the right hand side of all equations \eqref{ES-E1-E2-E3-a}. A general form for the integrand can be written schematically as
\begin{equation*}
S = \doubleint_{\mathcal{D}}|\nabla \Omega^p \varphi||\nabla \Omega^q\varphi||L\Omega^i \varphi|.
\end{equation*}
Notice that at least one $\nabla$ in this formula is not $L$. Thus, we can estimate this term exact in the same way as we have done for $R_4$. This leads to
\begin{equation*}
S \lesssim\delta^{\frac{1}{4}}|u|^{-\frac{1}{2}}M^3.
\end{equation*}

Finally, for $T$, i.e. the last integral terms on the right hand side of all equations \eqref{ES-E1-E2-E3-a}. It is defined as
\begin{equation*}
T = \doubleint_{\mathcal{D}} \frac{1}{r}|\Lb\Omega^i\varphi||L\Omega^i \varphi|,
\end{equation*}
We bound it by
\begin{align*}
T &\lesssim \int^{u}_{u_{0}}\frac{1}{|u'|}\|L\Omega^i\varphi\|_{L^{2}(C_{u'})}\|\Lb\Omega^i\varphi\|_{L^{2}(C_{u'})}du'\\
&\lesssim \int^{u}_{u_{0}}\frac{1}{|u'|}M  \delta |u'|^{-1}Mdu' \\
&\lesssim \delta|u|^{-1}M^2
\end{align*}

Putting those estimates in \eqref{ES-E1-E2-E3-a}, in view of the size of initial data \eqref{L2 on initial surface 1} as well as $M \geq 1$, we obtain
\begin{equation*}
\int_{C_{u}}|L\Omega^i\varphi|^{2}+\int_{\underline{C}_{\underline{u}}}|\nablaslash\Omega^i\varphi|^{2} \lesssim I_4^2 + \delta^{\frac{1}{4}}|u|^{-\frac{1}{2}}M^3.
\end{equation*}
Hence,
\begin{equation}\label{ES-E1-E2-E3-b}
\|L\Omega^i\varphi\|_{L^2(C_u)} + \|\nablaslash\Omega^i \varphi\|_{L^2(\Cb_{\ub})} \lesssim I_4 + \delta^{\frac{1}{8}}|u|^{-\frac{1}{4}}M^{\frac{3}{2}},
\end{equation}
for $i=0,1,2$.\\

Next we switch $X$ to be $\Lb$. In view of \eqref{fundamental energy identity}, we have
\begin{equation}\label{ES-Eb1-Eb2-Eb3-a}
\begin{split}
&\int_{C_{u}}|\nablaslash \Omega^i \varphi|^{2}+\int_{\underline{C}_{\underline{u}}}|\Lb \Omega^i \varphi|^{2} =\int_{C_{u_{0}}}|\nablaslash \Omega^i \varphi|^{2}  + \doubleint_{\mathcal{D}}Q(\nabla \Omega^i \varphi, \nabla \varphi)\Lb \Omega^i \varphi  \\
&  + \doubleint_{\mathcal{D}}\sum_{p+q \leq i, p<i, q<i}
Q(\nabla\Omega^{p}\phi, \nabla\Omega^{q}\phi)\Lb \Omega^i \varphi -\doubleint_{\mathcal{D}} \frac{1}{r} \Lb \Omega^i \varphi \cdot L \Omega^i\varphi .
\end{split}
\end{equation}
for $i=0, 1$ and $2$.
At this point, we rewrite the above equations as
\begin{equation*}
\begin{split}
\int_{C_{u}}|\nablaslash \Omega^i \varphi|^{2}+\int_{\underline{C}_{\underline{u}}}|\Lb \Omega^i \varphi|^{2} =\int_{C_{u_{0}}}|\nablaslash \Omega^i \varphi|^{2}  +  R + S +T.
\end{split}
\end{equation*}
where $R$, $S$ and $T$ are defined in an obvious way.

We start with $R$, i.e. the second integral terms on the right hand side of all equations \eqref{ES-Eb1-Eb2-Eb3-a}, in view of \eqref{null form bound}, it splits into
\begin{equation*}
\begin{split}
 R_1 &= \doubleint_{\mathcal{D}}(|L \varphi|+|\nablaslash \varphi|)|\Lb\Omega^i \varphi|^2, \\
 R_2 &= \doubleint_{\mathcal{D}}(|\nablaslash\varphi|+|\Lb\varphi|)|L\Omega^i\varphi||\Lb\Omega^i \varphi|,\\
 R_3 &= \doubleint_{\mathcal{D}}(|\Lb \varphi|+|\nablaslash \varphi|)|\nablaslash\Omega^i\varphi||\Lb\Omega^i \varphi|, \\
 R_4 &= \doubleint_{\mathcal{D}}|L\varphi||\nablaslash\Omega^i\varphi||\Lb\Omega^i \varphi|.
\end{split}
\end{equation*}
We bound those terms one by one. For $R_1$, in view of Lemma \ref{lemma L2 of Lb phi}, we have
\begin{align*}
 R_1 &\leq \int_{0}^{\ub}(\|L \varphi\|_{L^{\infty}}+\|\nablaslash\varphi\|_{L^{\infty}}) \|\Lb\Omega^i\varphi\|^2_{L^2(\Cb_{\ub'})} d \ub' \\
 &\lesssim \delta^{\frac{3}{2}}|u|^{-2}M^3.
\end{align*}
For $R_2$, $R_3$ and $R_4$, in view of Proposition \ref{proposition L infinity and L4 estimates}, we bound the three factor in the integrands in $L^4$, $L^4$ and $L^2$, hence,
\begin{equation*}
R_2+R_3+ R_4 \lesssim \delta^{\frac{5}{4}}|u|^{-\frac{3}{2}}M^3.
\end{equation*}
We remark that for $R_4$, it is necessary to bound $\nablaslash\Omega^i\varphi$ in $L^4$ instead of $L^2$. In this way, one can gain an extra $\delta^\frac{1}{4}$.

We now move to $S$, i.e. the third integral terms on the right hand side of all equations \eqref{ES-Eb1-Eb2-Eb3-a}. A general form for the integrand can be written schematically as
\begin{equation*}
S = \doubleint_{\mathcal{D}}|\nabla \Omega^p \varphi||\nabla \Omega^q\varphi||\Lb\Omega^i \varphi|.
\end{equation*}
Notice that at least one $\nabla$ in this formula is not $L$. Thus, we can estimate this term by bounding the three factors in the integrands in $L^2$, $L^4$ and $L^4$. This leads to
\begin{equation*}
S \lesssim \delta^{\frac{5}{4}}|u|^{-\frac{3}{2}}M^3.
\end{equation*}

For $T$, i.e. the last integral terms on the right hand side of all equations \eqref{ES-Eb1-Eb2-Eb3-a},
\begin{equation*}
T = \doubleint_{\mathcal{D}} \frac{1}{r}|L\Omega^i \varphi||\Lb\Omega^i\varphi|,
\end{equation*}
we shall use a different approach. This is closely related the so called \textit{reductive structure} in Christodoulou's work \cite{Ch-08}. Roughly speaking, at this point, one have to proceed the estimates in a correct order and one has to rely the estimates derived in previous steps. This is not transparent in the current work, because instead of deriving the estimates in some kind of arbitrary order, the order of the estimates we do here is carefully chosen. We bound $T$ as follows
\begin{align*}
T &\lesssim \doubleint_{\mathcal{D}}\frac{1}{|u'|}|L\Omega^i\varphi||\Lb\Omega^i\varphi| \\
&\lesssim \doubleint_{\mathcal{D}}\frac{\delta}{|u'|^2}|L\Omega^i\varphi|^2+ \frac{1}{\delta}|\Lb\Omega^i\varphi|^2\\
&= \int_{u_0}^{u}\frac{\delta}{|u'|^2} \|L\Omega^i\varphi\|^2_{L^{2}(C_{u'})} du' + \frac{1}{\delta}\int_{0}^{\ub}\|\Lb\Omega^i\varphi\|^2_{L^2(\Cb_{\ub'})} d\ub'.
\end{align*}
The first term in last line has already been controlled in \eqref{ES-E1-E2-E3-b}, so we have
\begin{align*}
T &\lesssim \int_{u_0}^{u}\frac{\delta}{|u'|^2} (I_4^2 + \delta^{\frac{1}{4}}|u|^{-\frac{1}{2}}M^{3}) du' + \frac{1}{\delta}\int_{0}^{\ub}\|\Lb\Omega^i\varphi\|^2_{L^2(\Cb_{\ub'})} d\ub'\\
&= \delta|u|^{-1}I_4^2 + \delta^{\frac{5}{4}}|u|^{-\frac{3}{2}}M^3  + \frac{1}{\delta}\int_{0}^{\ub}\|\Lb\Omega^i\varphi\|^2_{L^2(\Cb_{\ub'})} d\ub'.
\end{align*}

In view of the size of initial data \eqref{L2 on initial surface 1}, we plug the above estimates into \eqref{ES-Eb1-Eb2-Eb3-a} to derive
\begin{equation*}
\int_{C_{u}}|\nablaslash\Omega^i\varphi|^{2}+\int_{\underline{C}_{\underline{u}}}|\Lb\Omega^i\varphi|^{2} \lesssim \delta|u|^{-1}I_4^2 + \delta^{\frac{5}{4}}|u|^{-\frac{3}{2}}M^3  + \frac{1}{\delta}\int_{0}^{\ub}\|\Lb\Omega^i\varphi\|^2_{L^2(\Cb_{\ub'})} d\ub'.
 \end{equation*}
Since $\|\Lb\Omega^i\varphi\|^2_{L^2(\Cb_{\ub})}$ also appears on the left hand side, a standard use of Gronwall's inequality removes the integral on the right hand side. This yields
\begin{equation}\label{ES-Eb1-Eb2-Eb3-b}
\|\nablaslash\Omega^i\varphi\|_{L^2(C_u)} + \|\Lb\Omega^i \varphi\|_{L^2(\Cb_{\ub})} \lesssim \delta^{\frac{1}{2}}|u|^{-\frac{1}{2}}I_4 + \delta^{\frac{5}{8}}|u|^{-\frac{3}{4}}M^{\frac{3}{2}}.
\end{equation}

Putting \eqref{ES-E1-E2-E3-b} and \eqref{ES-Eb1-Eb2-Eb3-b} together, we derive the energy estimates for one derivatives of $\varphi$ as follows,
\begin{equation}\label{ES E-Eb up to three derivative}
E_k(u,\ub) + \Eb_k(u, \ub) \lesssim I_4 +  \delta^{\frac{1}{8}}|u|^{-\frac{1}{4}} M^{\frac{3}{2}},
\end{equation}
for $k=1, 2$ and $3$.

\subsection{Energy Estimates for $F_{k}(u,\ub)$ and $\Fb_{k}(u,\ub)$ when $k =2, 3$ and $4$}\label{Section Estimates on Fb_3 and Fb_4}
We start with $F_{k}(u,\ub)$'s. By commuting $L$ and $\Omega$ with \eqref{Main Equation}, in view of \eqref{commute L Omega n with main equation},
we use the scheme in Section \ref{Energy estimates scheme} for this equation where we take $\phi = L\Omega^i\varphi$ with $i=0, 1, 2$ and $X = L$. We then have
\begin{equation}\label{ES-F3-F4}
\begin{split}
&\int_{C_{u}}|L^2 \Omega^i   \varphi|^{2}+\int_{\underline{C}_{\underline{u}}}|\nablaslash L \Omega^i\varphi|^{2} =\int_{C_{u_{0}}}|L^2 \Omega^i \varphi|^{2}  +\doubleint_{\mathcal{D}}Q(\nabla L \Omega^i\varphi, \nabla \varphi)L^2\Omega^i\varphi \\
& + \doubleint_{\mathcal{D}} \sum_{p+q\leq i,p< i, q<i} Q(\nabla L\Omega^{p}\phi,
\nabla\Omega^{q}\phi) L^2\Omega^i \varphi + \doubleint_{\mathcal{D}}\sum_{ p+q \leq i}
[L, Q](\nabla\Omega^{p}\phi, \nabla\Omega^{q}\phi)L^2 \Omega^i\varphi\\
& -\doubleint_{\mathcal{D}} \frac{1}{r^2}(\Lb \Omega^i\varphi -L \Omega^i\varphi)L^2 \Omega^i\varphi-\frac{2}{r}\laplacianslash \Omega^i\varphi L^2 \Omega^i\varphi-\frac{1}{r} \Lb L \Omega^i\varphi L^2\Omega^i\varphi.
\end{split}
\end{equation}
At this point, we rewrite the above equations as
\begin{equation*}
\begin{split}
\int_{C_{u}}|L^2 \Omega^i \varphi|^{2}+\int_{\underline{C}_{\underline{u}}}|\nablaslash L \Omega^i\varphi|^{2} =\int_{C_{u_{0}}}|L^2 \Omega^i \varphi|^{2}  +  R + S +T +U,
\end{split}
\end{equation*}
where $R$, $S$, $T$ and $U$ are defined in an obvious way.

First of all, we consider $R$, i.e. the second integral term on the right hand side of \eqref{ES-F3-F4}, in view of \eqref{null form bound}, it splits into
\begin{equation*}
\begin{split}
 R_1 &= \doubleint_{\mathcal{D}}(|\Lb \varphi|+|\nablaslash \varphi|)|L^2 \Omega^i \varphi|^2, \\
 R_2 &= \doubleint_{\mathcal{D}}(|\nablaslash \varphi|+|L \varphi|)|\Lb L \Omega^i \varphi||L^2 \Omega^i \varphi|,\\
 R_3 &= \doubleint_{\mathcal{D}}(|\nablaslash \varphi|+|\Lb \varphi| + |L \varphi|)|\nablaslash L \Omega^i \varphi||L^2 \Omega^i\varphi|.
\end{split}
\end{equation*}
We will bound the three factors in these integrands in $L^{\infty}$, $L^2$ and $L^2$ respectively.

For $R_1$, we simply bound $|L \varphi|$ and $|\nablaslash \varphi|$ in $L^\infty$ and obtain
\begin{equation*}
R_1  \lesssim \delta^{-\frac{7}{4}}|u|^{-\frac{1}{2}}M^3.
\end{equation*}

For $R_2$, we first need $L^2$ estimates on $L \Lb \Omega^i \varphi$. According to \eqref{Main Equation in null frame}, we have
\begin{align*}
 \|L\Lb\varphi\|_{L^2(C_u)} &\lesssim \|\laplacianslash \varphi\|_{L^2(C_u)}+\|\frac{1}{r} L\varphi\|_{L^2(C_u)}+\|\frac{1}{r}\Lb\varphi\|_{L^2(C_u)}\\
&\quad  +\|\Lb\varphi  L\varphi\|_{L^2(C_u)}+\|\Lb \varphi \nablaslash \varphi\|_{L^2(C_u)}+\|L\varphi \nablaslash \varphi\|_{L^2(C_u)}+\||\nablaslash \varphi|^2\|_{L^2(C_u)}
\end{align*}
For those quadratic terms, we bound one of them in $L^\infty$, thus, we have
\begin{equation}\label{estimates on L Lb phi}
 \|L\Lb\varphi\|_{L^2(C_u)} \lesssim |u|^{-1}M.
\end{equation}
For $L\Lb \Omega^i \varphi$'s with $i=1,2$, we can proceed exactly in the same way to derive (we also use Sobolev inequalities)
\begin{equation}\label{estimates on L Lb Omega phi}
\begin{split}
 \|L\Lb \Omega^i \varphi\|_{L^2(C_u)} &\lesssim |u|^{-1}M,\\
  \|L\Lb\Omega^{i-1} \varphi\|_{L^4(S_{\ub,u})} &\lesssim \delta^{-\frac{1}{2} } |u|^{-\frac{3}{2}} M.
  \end{split}
\end{equation}
Therefore, we bound $|\nablaslash \varphi|$ and $|L \varphi|$ in $L^\infty$, $|\Lb L \Omega^i \varphi|$ and $|L^2 \Omega^i \varphi|$ in $L^2$, this yields
\begin{align*}
R_2 \lesssim \delta^{-\frac{3}{2}}|u|^{-1}M^3.
\end{align*}
For $R_3$, similarly, we have
\begin{align*}
R_3 \lesssim \delta^{-\frac{3}{2}}|u|^{-1}M^3.
\end{align*}

Secondly, we consider $S$, i.e. the third integral term on the right hand side of \eqref{ES-F3-F4}. It is bounded by the sum of the following terms
\begin{equation*}
\begin{split}
S_1 &= \doubleint_{\mathcal{D}}(|\Lb \Omega^q\varphi|+|\nablaslash \Omega^q \varphi|)|L^2\Omega^p \varphi||L^2 \Omega^i \varphi|, \\
S_2 &= \doubleint_{\mathcal{D}}(|\nablaslash \Omega^q\varphi|+|L \Omega^q\varphi|)|\Lb L \Omega^p \varphi||L^2 \Omega^i \varphi|,\\
S_3 &= \doubleint_{\mathcal{D}}(|\nablaslash\Omega^q \varphi|+|\Lb \Omega^q\varphi| + |L \Omega^q\varphi|)|\nablaslash L \Omega^p  \varphi||L^2 \Omega^j\varphi|,
\end{split}
\end{equation*}
where $p+q\leq i$, $p<i$ and $q < i$. Since the numbers of derivatives in the first factors are not saturated, we will bound the three factors in these integrands in $L^{4}$, $L^4$ and $L^2$ respectively.

For $S_1$, we have
\begin{align*}
 S_1  &\leq \int_0^{\ub} \int_{u_0}^{u}(\|\nablaslash\Omega^q\varphi\|_{L^4(S_{\ub', u'})}+\|\Lb\Omega^q\varphi\|_{L^4(S_{\ub',u'})})\|L^2\Omega^p\varphi\|_{L^4(S_{\ub',u'})} \|L^2\Omega^i\varphi\|_{L^2(S_{\ub', u'})} \\
&\leq \delta^{\frac{1}{4}} M \int_0^{\ub} \int_{u_0}^{u} (|u'|^{-\frac{3}{2}} \|L^2\Omega^j \varphi\|_{L^{2}(S_{\ub', u'})} +|u'|^{-\frac{1}{2}}\|\nablaslash L^2\Omega^j \varphi \|_{L^{2}(S_{\ub', u'})}) \|L^2\Omega^i\varphi\|_{L^2(S_{\ub', u'})},
\end{align*}
where $j = q$, $q+1$ and we have used the following Sobolev inequalities for last line,
 \begin{equation*}
\|L^2 \Omega^j\varphi \|_{L^{4}(S_{\ub,u})} \lesssim  |u|^{-\frac{1}{2}}\|L^2 \Omega^j\varphi\|_{L^{2}(S_{\ub, u})} +|u|^{\frac{1}{2}}\|\nablaslash L^2\Omega^j \varphi \|_{L^{2}(S_{\ub, u})}.
 \end{equation*}
Thus, we have
\begin{equation*}
\begin{split}
 S_1  &\lesssim \delta^{\frac{1}{4}} M \int_{u_0}^{u} |u'|^{-\frac{3}{2}}\|L^2\Omega^j \varphi\|_{L^{2}(C_{u'})}  \|L^2\Omega^i\varphi\|_{L^2(C_{u'})}+|u'|^{-\frac{3}{2}}\|L^2\Omega^{j+1}\varphi\|_{L^2(C_{u'})}\|L^2\Omega^i\varphi\|_{L^2(C_{u'})} \\
 &\lesssim \delta^{-\frac{7}{4}}|u|^{-\frac{1}{2}}M^3.
 \end{split}
\end{equation*}
For $S_2$ and $S_3$, similarly, we obtain
\begin{equation*}
S_2+S_3 \lesssim \delta^{-\frac{3}{2}} |u|^{-1} M^3.
\end{equation*}

Thirdly, we consider $T$, i.e. the fourth integral terms on the right hand side of \eqref{ES-F3-F4}. It is bounded by
\begin{equation*}
T = \doubleint_{\mathcal{D}}\frac{1}{|u|} \sum_{p+q \leq i}(|\nablaslash \Omega^p\varphi||L \Omega^q\varphi|+|\nablaslash \Omega^p\varphi||\Lb \Omega^q\varphi| + |\Lb\Omega^p\varphi||L \Omega^q \varphi| ) |L^2 \Omega^i \varphi|,
\end{equation*}
The strategy is to control the three factors in the integrands either in $L^\infty$, $L^2$ and $L^2$ or in $L^4$, $L^4$ and $L^2$ respectively, depending wether the number of derivatives are saturated or not. We omit the details since the proof is exactly the same as for $R$ and $S$ terms. We obtain
\begin{equation*}
T \lesssim \delta^{-\frac{3}{2}}|u|^{-1}M^3.
\end{equation*}

Finally, we consider $U$, i.e. the last integral term in \eqref{ES-F3-F4}, we simply estimate two factors in the integrands in $L^2$ and $L^2$. This gives
\begin{equation*}
U \lesssim \delta^{-1}|u|^{-1}M^2+\delta^{-\frac{1}{2}}|u|^{-\frac{3}{2}}M^2.
\end{equation*}

Putting all the estimates back to \eqref{ES-F3-F4}, in view of the size of data \eqref{L2 on initial surface 2}, we have
\begin{equation*}
\int_{C_{u}}|L^2 \Omega^i \varphi|^{2}+\int_{\underline{C}_{\underline{u}}}|\nablaslash L \Omega^i \varphi|^{2} \lesssim \delta^{-2}I_4^2  + \delta^{-\frac{7}{4}}|u|^{-\frac{1}{2}}M^3.
\end{equation*}
for $i=0,1$ and $2$. This is equivalent to say that
\begin{equation}\label{ES-ES-F3-F4-b}
F_{k}(u) \lesssim I_4  + \delta^{\frac{1}{8}}|u|^{-\frac{1}{4}}M^\frac{3}{2},
\end{equation}
for $k=2,3$ and $4$.\\

We now derive the estimates for $\Fb_{k}(u,\ub)$'s. Since the proof can almost be converted word by word from the proof we just preformed for $F_{k}(u,\ub)$'s, in stead of giving all the details, we only sketch the idea.

By commuting $\Lb, \Omega$ with \eqref{Main Equation}, in view of \eqref{commute Lb Omega n with main equation}, one can apply the scheme in Section \ref{Energy estimates scheme} for this equation by take $\phi = \Lb\Omega^i\varphi, i=0, 1, 2,$ and $X = \Lb$. Thus, \eqref{fundamental energy identity} reads as
\begin{equation}\label{ES-Fb3-Fb4-a}
\begin{split}
&\int_{C_{u}}|\nablaslash \Lb \Omega^i \varphi|^{2}+\int_{\underline{C}_{\underline{u}}}|\Lb^2 \Omega^i\varphi|^{2} =\int_{C_{u_{0}}}|\nablaslash \Lb \Omega^i \varphi|^{2}  +\doubleint_{\mathcal{D}}Q(\nabla \Lb \Omega^i\varphi, \nabla \varphi)\Lb^2\Omega^i\varphi  \\
&+ \doubleint_{\mathcal{D}} \sum_{p+q\leq i,p<i,q<i} Q(\nabla \Lb\Omega^{p}\phi,
\nabla\Omega^{q}\phi) \Lb^2\Omega^i \varphi + \doubleint_{\mathcal{D}}\sum_{ p+q \leq i}
[\Lb, Q](\nabla\Omega^{p}\phi, \nabla\Omega^{q}\phi)\Lb^2 \Omega^i\varphi\\
& +\doubleint_{\mathcal{D}}\frac{1}{r^2}(\Lb \Omega^i\varphi -L \Omega^i\varphi)\Lb^2 \Omega^i\varphi-\frac{2}{r}\laplacianslash \Omega^i\varphi \Lb^2 \Omega^i\varphi-\frac{1}{r}  L \Lb \Omega^i\varphi \Lb^2\Omega^i\varphi .
\end{split}
\end{equation}
At this point, we rewrite the above equations as
\begin{equation*}
\begin{split}
\int_{C_{u}}|\nablaslash \Lb \Omega^i \varphi|^{2}+\int_{\underline{C}_{\underline{u}}}|\Lb^2 \Omega^i\varphi|^{2} =\int_{C_{u_{0}}}|\nablaslash \Lb \Omega^i \varphi|^{2} +  R + S +T +U,
\end{split}
\end{equation*}
where $R$, $S$, $T$ and $U$ are defined in an obvious way.

For $R$, we bound it by the sum of the following terms
\begin{equation*}
\begin{split}
 R_1 &= \doubleint_{\mathcal{D}}(|L \varphi|+|\nablaslash \varphi|)|\Lb^2 \Omega^i \varphi|^2, \\
 R_2 &= \doubleint_{\mathcal{D}}(|\nablaslash \varphi|+|\Lb \varphi|)|L \Lb \Omega^i \varphi||\Lb^2 \Omega^i \varphi|,\\
 R_3 &= \doubleint_{\mathcal{D}}(|\nablaslash \varphi|+|\Lb \varphi| + |L \varphi|)|\nablaslash \Lb \Omega^i \varphi||\Lb^2 \Omega^i\varphi|.
\end{split}
\end{equation*}
We then bound the three factors in the above integrands in $L^{\infty}$, $L^2$ and $L^2$ respectively. This will yield directly
\begin{equation*}
R_1  \lesssim \delta^{\frac{1}{2}}|u|^{-2}M^3,
\end{equation*}
\begin{equation*}
R_2 \lesssim \delta^{\frac{3}{4}}|u|^{-\frac{5}{2}} M^3,
\end{equation*}
and
\begin{equation*}
R_3 \lesssim \delta |u|^{-3}M^3.
\end{equation*}

For $S$, it is bounded by the sum of the following terms
\begin{equation*}
\begin{split}
S_1 &= \doubleint_{\mathcal{D}}(|L \Omega^q \varphi|+|\nablaslash \Omega^q \varphi|)|\Lb^2 \Omega^p \varphi||\Lb^2 \Omega^i \varphi|, \\
S_2 &= \doubleint_{\mathcal{D}}(|\nablaslash \Omega^q\varphi|+|\Lb \Omega^q\varphi|)| L \Lb \Omega^p\varphi||\Lb^2 \Omega^i \varphi|,\\
S_3 &= \doubleint_{\mathcal{D}}(|\nablaslash\Omega^q \varphi|+|\Lb \Omega^q\varphi| + |L \Omega^q\varphi|)|\nablaslash \Lb  \Omega^p \varphi||\Lb^2 \Omega^i\varphi|,
\end{split}
\end{equation*}
where $p+q\leq i$, $p<i$ and $q < i$.

Since the numbers of derivatives in the first factors are not saturated, we can bound the three factors in these integrands in $L^{4}$, $L^4$ and $L^2$ respectively. This yields
\begin{equation*}
S_1  \lesssim \delta^{\frac{1}{2}}|u|^{-2}M^3,
\end{equation*}
and
\begin{equation*}
S_2+S_3 \lesssim \delta^{\frac{3}{4}}|u|^{-\frac{5}{2}} M^3.
\end{equation*}

For $T$, typically, there are bounded by
\begin{equation*}
 S_{7}= \doubleint_{\mathcal{D}}\frac{1}{|u|} \sum_{p+q \leq i}(|\nablaslash \Omega^p\varphi||L \Omega^q\varphi|+|\nablaslash \Omega^p\varphi||\Lb \Omega^q\varphi| + |\Lb\Omega^p\varphi||L \Omega^q \varphi| ) |L^2 \Omega^i \varphi|,
\end{equation*}
We control the three factors in the integrands either in $L^\infty$, $L^2$ and $L^2$ or in $L^4$, $L^4$ and $L^2$ respectively and obtain
\begin{equation*}
T \lesssim \delta^{\frac{3}{4}}|u|^{-\frac{5}{2}} M^3.
\end{equation*}

For $U$, we control the two factors in their integrands in $L^2$ and $L^2$ and obtain
\begin{equation*}
U \lesssim \delta^{\frac{1}{2} }|u|^{-2}M^2+\delta|u|^{-\frac{5}{2}}M^2.
\end{equation*}

Putting all those estimates together, in view of \eqref{L infinity on Lb nablaslash nablaslash phi on C_0} and the fact that $|u|\leq 1$, we derive
\begin{equation*}
\int_{\underline{C}_{\underline{u}}}|\Lb^2 \Omega^i\varphi|^{2} \lesssim \delta^{2}|u_0|^{-4}I_4^2  + \delta^{\frac{1}{2} }|u|^{-2}M^3.
\end{equation*}
for $i=0,1,2$. This is equivalent to say that
\begin{equation}\label{ES-Fb3-Fb4-b}
\Fb_{k}(\ub) \lesssim \delta|u_0|^{\frac{-3}{2}}I_4 + \delta^{\frac{1}{4}}|u|^{-\frac{1}{2}} M^\frac{3}{2},
\end{equation}
for $k=2,3$ and $4$.

\subsection{Estimates for $E_4(u,\ub)$ and $\Eb_4(u,\ub)$}
First of all, we commute $\Omega$ three times with \eqref{Main Equation}, that is, taking $n=3$ in \eqref{commute n Omega with main equation}, this yields
\begin{equation*}
\Box \Omega^{3} \varphi = \sum_{p+q \leq 3}Q(\nabla \Omega^p \varphi, \nabla \Omega^q \varphi).
\end{equation*}

For $E_4(u,\ub)$, we use the scheme in Section \ref{Energy estimates scheme} for this equation by taking $\phi = \Omega^3 \varphi$ and $X = L$. In view of \eqref{fundamental energy identity}, we have
\begin{equation}\label{ES-4-1-a}
\begin{split}
\int_{C_{u}}|L \Omega^3 \varphi|^{2}+\int_{\underline{C}_{\underline{u}}}&|\nablaslash \Omega^3 \varphi|^{2} =\int_{C_{u_{0}}}|L\Omega^3 \varphi|^{2} + \doubleint_{\mathcal{D}}L \Omega^3 \varphi Q(\nabla \Omega^3 \varphi, \nabla \varphi) \\
& + \doubleint_{\mathcal{D}}L \Omega^3 \varphi \sum_{p+q \leq 3, p<3, q<3}Q(\nabla \Omega^p \varphi, \nabla \Omega^q \varphi)+\doubleint_{\mathcal{D}} \frac{1}{r} \Lb \Omega^3 \varphi \cdot L \Omega^3\varphi.
\end{split}
\end{equation}
At this point, we rewrite the above equations as
\begin{equation*}
\begin{split}
\int_{C_{u}}|L \Omega^3 \varphi|^{2}&+\int_{\underline{C}_{\underline{u}}}|\nablaslash \Omega^3 \varphi|^{2} =\int_{C_{u_{0}}}|L\Omega^3 \varphi|^{2}+ R + S +T.
\end{split}
\end{equation*}
where $R$, $S$ and $T$ are defined in an obvious way.

We claim that the estimates for $S$ and $T$ are easy and we can bound them by
\begin{equation*}
S \lesssim \delta^{\frac{1}{4}}|u|^{-\frac{1}{2}}M^3,
\end{equation*}
and
\begin{equation*}
T \lesssim \delta|u|^{-2}M^2.
\end{equation*}
To be more precise: since the numbers of derivatives for the integrands of $S$ are not saturated (i.e. only one term has four derivatives), we can bound the three factors in the integrands in $L^{4}$, $L^4$ and $L^2$ respectively; for $T$, we simply bound the integrands in two $L^2$'s by H\"{o}lder's inequality. The actual proof goes in the same way as in previous the section and we omit the details.

It remains to bound $R$. This terms can be bounded by the sum of the following terms before,
\begin{equation*}
\begin{split}
R_1 &= \doubleint_{\mathcal{D}}(|\Lb \varphi|+|\nablaslash \varphi|)|L \Omega^3 \varphi|^2, \\
R_2 &= \doubleint_{\mathcal{D}}(|L \varphi|+|\nablaslash \varphi|)|\Lb \Omega^3 \varphi||L \Omega^3 \varphi|, \\
R_3 &= \doubleint_{\mathcal{D}}(|\nablaslash \varphi|+|\Lb \varphi| + |L \varphi|)|\nablaslash \Omega^3 \varphi||L \Omega^3\varphi|.
\end{split}
\end{equation*}

We remark that, because this is about the top order derivative estimates, we have to place the last two terms in the above integrands in $L^2$ and the first one in $L^\infty$. In such a way, we can easily derive
\begin{equation*}
R_1  \lesssim \delta^{\frac{1}{4}}|u|^{-\frac{1}{2}}M^3,
\end{equation*}
and
\begin{equation*}
R_2 \lesssim \delta^{\frac{1}{2}}|u|^{-1}M^3.
\end{equation*}

The estimates for $R_3$ are more difficult and require the knowledge of all the previous estimates derived so far. Before going into the details, we would like to explain how the difficulties appear. It is intimately related to the relaxation of the propagation estimates. One may expect $\|\nablaslash \Omega^3 \varphi\|_{L^2(C_u)}$ behaves as $\delta^{\frac{1}{2}}$ in view of the initial data. But in reality, because we are using a relaxed version of propagation estimates, we lose automatically $\delta^{\frac{1}{2}}$. Therefore, if we treat $R_3$ in the same way as for $R_1$ and $R_2$, we will not get any positive power in $\delta$ and therefore we can not close the bootstrap argument.

To around the difficulties, we recall that in previous sections those $L^\infty$ estimates (say on $L\varphi$) are directly derived from the bootstrap assumptions via the Sobolev inequalities. The key observation is, if we make use of the estimates derived so far in previous sections instead of the bootstrap assumptions, we can indeed improve the $L^\infty$ estimates for $L\varphi$. This improvement will be just good enough to enable us to close the argument.

We first improve the $L^{4}$ estimates for $L\Omega\varphi$, according to \eqref{ES E-Eb up to three derivative} and Sobolev inequalities, we have
\begin{equation}\label{Improved-L4-L-2-2}
\begin{split}
 |u|^{\frac{1}{2}}\| L\Omega\varphi \|_{L^{4}(S_{\ub,u})} &\lesssim  \| L^{2}\Omega\varphi\|^{\frac{1}{2}}_{L^{2}(C_{u})}(\| L\Omega\varphi \|^{\frac{1}{2}}_{L^{2}(C_{u})}
 +|u|^{\frac{1}{2}}\| \nablaslash L\Omega\varphi \|^{\frac{1}{2}}_{L^{2}(C_{u})}) \\
&\lesssim \delta^{-\frac{1}{2}}(I_{4}^{\frac{1}{2}}+\delta^{\frac{1}{16}}
|u|^{-\frac{1}{8}}M^{\frac{3}{4}})(I_4^{\frac{1}{2}} + \delta^{\frac{1}{16}}|u|^{-\frac{1}{8}}M^{\frac{3}{4}})\\
&\lesssim \delta^{-\frac{1}{2}}(I_4+\delta^{\frac{1}{8}}
|u|^{-\frac{1}{4}}M^{\frac{3}{2}}).
\end{split}
\end{equation}
This implies better $L^{\infty}$ estimates for $L\varphi$, once again via Sobolev inequalities, as follows
\begin{equation}\label{Improved-L-Infinity-L-2}
\begin{split}
 \|L\varphi\|_{L^\infty} &\lesssim  |u|^{-\frac{1}{2}}\| L\varphi \|_{L^{4}(S_{\ub,u})}
 +|u|^{\frac{1}{2}}\| L\nablaslash\varphi \|_{L^{4}(S_{\ub,u})}\\
 & \lesssim \delta^{-\frac{1}{2}}|u|^{-1} (I_4+\delta^{\frac{1}{8}}
|u|^{-\frac{1}{4}}M^{\frac{3}{2}}).
\end{split}
\end{equation}
We now proceed to bound $R_3$ and we only consider the main terms  with $L \varphi$ (the others are much easier to control):
\begin{align*}
R_3 &\leq \doubleint_{\mathcal{D}}|L \varphi|^2|u'|^2|\nablaslash\Omega^3\varphi|^2
+|u'|^{-2}|L\Omega^3\varphi|^2du'd\ub'\\
&\leq \int_{0}^{\delta}\delta^{-1}(I_4+\delta^{\frac{1}{8}}
|u|^{-\frac{1}{4}}M^{\frac{3}{2}})^2\|\nablaslash\Omega^3\varphi\|^2_{L^2(\Cb_{\ub'})}d\ub'
+\int_{u_0}^{u}|u'|^{-2}\|L\Omega^3\varphi\|^2_{L^2(C_{u'})} du'.
\end{align*}

Finally, we put all the estimates together, in view of the size of the initial data on $C_{u_0}$, we have
\begin{equation*}
\begin{split}
\|L\Omega^3\varphi\|^2_{L^2(C_{u})}+\|\nablaslash\Omega^3\varphi\|^2_{L^2(\Cb_{\ub})} &\lesssim I_{4}^{2}+ \delta^{\frac{1}{4}}|u|^{-\frac{1}{2}}M^3+\int_{u_0}^{u}|u'|^{-2}\|L\Omega^3\varphi\|^2_{L^2(C_{u'})} du'\\
&+\int_{0}^{\delta}\delta^{-1}(I_4+\delta^{\frac{1}{8}}
|u|^{-\frac{1}{4}}M^{\frac{3}{2}})^2\|\nablaslash\Omega^3\varphi\|^2_{L^2(\Cb_{\ub'})}d\ub'.
\end{split}
\end{equation*}
Thus, thanks to Gronwall's inequality,
\begin{equation*}
\begin{split}
&\quad \quad |u|^3\|L\nablaslash^3\varphi\|_{L^2(C_u)} + \||u^3|\nablaslash^4 \varphi\|_{L^2(\Cb_{\ub})}\\
&\lesssim (\exp|u|^{-1}+\exp (I_4+\delta^{\frac{1}{8}}
|u|^{-\frac{1}{4}}M^{\frac{3}{2}})^2 )(I_4 +\delta^{\frac{1}{8}}|u|^{-\frac{1}{4}}M^{\frac{3}{2}}).
\end{split}
\end{equation*}
Therefore, if $\delta$ is sufficiently small, we have
\begin{equation*}
\begin{split}
|u|^3\|L\nablaslash^3\varphi\|_{L^2(C_u)} + \||u^3|\nablaslash^4 \varphi\|_{L^2(\Cb_{\ub})} \lesssim I_4 +\delta^{\frac{1}{8}}|u|^{-\frac{1}{4}}M^{\frac{3}{2}}.
\end{split}
\end{equation*}
Equivalently,
\begin{equation}\label{ES-4-1-b}
E_4(u,\ub)\lesssim I_4 +  \delta^{\frac{1}{8}}|u|^{-\frac{1}{4}} M^{\frac{3}{2}}.
\end{equation}
This is the desired estimates for $E_4(u,\ub)$.\\

For $\Eb_4(u,\ub)$, we switch $X$ to $\Lb$. In view of \eqref{fundamental energy identity}, we have

\begin{equation}\label{ES-4-2-a}
\begin{split}
\int_{C_{u}}|\nablaslash \Omega^3 \varphi|^{2}+\int_{\underline{C}_{\underline{u}}}&|\Lb \Omega^3 \varphi|^{2} =\int_{C_{u_{0}}}|\nablaslash \Omega^3 \varphi|^{2}  + \doubleint_{\mathcal{D}}\Lb \Omega^3 \varphi Q(\nabla \Omega^3 \varphi, \nabla \varphi) \\
& + \doubleint_{\mathcal{D}}\Lb \Omega^3 \varphi \sum_{p+q \leq 3, p<3, q<3}Q(\nabla \Omega^p \varphi, \nabla \Omega^q \varphi)-\doubleint_{\mathcal{D}} \frac{1}{r} \Lb \Omega^3 \varphi \cdot L \Omega^3\varphi.
\end{split}
\end{equation}
At this point, we rewrite the above equations as
\begin{equation*}
\begin{split}
\int_{C_{u}}|\nablaslash \Omega^3 \varphi|^{2}+\int_{\underline{C}_{\underline{u}}}|\Lb \Omega^3 \varphi|^{2} =\int_{C_{u_{0}}}|\nablaslash \Omega^3 \varphi|^{2}+ R + S +T.
\end{split}
\end{equation*}
where $R$, $S$ and $T$ are defined in an obvious way.

We claim that the estimates for $S$ are easy since the numbers of derivatives for the integrands of $S$ are not saturated. We bound the three factors in the integrands in $L^{4}$, $L^4$ and $L^2$ respectively and we derive
\begin{equation*}
S \lesssim \delta^{\frac{5}{4}}|u|^{-\frac{3}{2}}M^3.
\end{equation*}

We can bound $R$ more or less as before. First of all, it is bounded by the sum of the following terms

\begin{equation*}
\begin{split}
R_1 &= \doubleint_{\mathcal{D}}(|L \varphi|+|\nablaslash \varphi|)|\Lb \Omega^3 \varphi|^2, \\
R_2 &= \doubleint_{\mathcal{D}}(|\Lb \varphi|+|\nablaslash \varphi|)|L \Omega^3 \varphi||\Lb \Omega^3 \varphi|, \\
R_3 &= \doubleint_{\mathcal{D}}(|\nablaslash \varphi|+|\Lb \varphi| + |L \varphi|)|\nablaslash \Omega^3 \varphi||\Lb \Omega^3\varphi|.
\end{split}
\end{equation*}
Once again, except for the last term in $R_3$, all the other terms are easy to control so we ignore all the them and assume
\begin{equation*}
R_3 = \doubleint_{\mathcal{D}} |L \varphi| |\nablaslash \Omega^3 \varphi||\Lb \Omega^3\varphi|.
\end{equation*}
Therefore, we repeat the previous argument and make use the improved $L^\infty$ estimates for $L \varphi$, we obtain
\begin{align*}
R_3 &\leq \doubleint_{\mathcal{D}}|L \varphi|^2|u'|^2|\Lb\Omega^3\varphi|^2
+|u'|^{-2}|\nablaslash\Omega^3\varphi|^2du'd\ub'\\
&\leq \int_{0}^{\delta}\delta^{-1}(I_4+\delta^{\frac{1}{8}}
|u|^{-\frac{1}{4}}M^{\frac{3}{2}})^2\|\Lb\Omega^3\varphi\|^2_{L^2(\Cb_{\ub'})}d\ub'
+\int_{u_0}^{u}|u'|^{-2}\|\nablaslash\Omega^3\varphi\|^2_{L^2(C_{u'})} du'.
\end{align*}

For $T$, thanks to \eqref{ES-4-1-b}, we have
\begin{equation*}
T \lesssim \delta|u|^{-1}I_4^2+ \delta^{\frac{5}{4}}|u|^{-\frac{3}{2}}M^3+ \frac{1}{\delta}\int_{0}^{u}\|\Lb\Omega^{3}\varphi\|^2_{L^2(\Cb_{\ub'})} d\ub'.
\end{equation*}
Putting all the estimates together, we have
\begin{equation*}
\begin{split}
\|\nablaslash\Omega^3\varphi\|^2_{L^2(C_{u})}+\|\Lb\Omega^3\varphi\|^2_{L^2(\Cb_{\ub})} &\lesssim \delta|u|^{-1}I_4^2+ \delta^{\frac{5}{4}}|u|^{-\frac{3}{2}}M^3+\int_{u_0}^{u}|u'|^{-2}\|\nablaslash\Omega^3\varphi\|^2_{L^2(C_{u'})} du'\\
&+\int_{0}^{\delta}\delta^{-1}(1+(I_4+\delta^{\frac{1}{8}}
|u|^{-\frac{1}{4}}M^{\frac{3}{2}})^2)\|\Lb\Omega^3\varphi\|^2_{L^2(\Cb_{\ub'})}d\ub'
\end{split}
\end{equation*}
Thus, thanks to Gronwall's inequality, we obtain
\begin{equation}\label{ES-4-2-b}
\begin{split}
\|\nablaslash\Omega^3\varphi\|_{L^2(C_u)} + \|\Lb \Omega^3 \varphi\|_{L^2(\Cb_{\ub})}
 &\lesssim\delta^{\frac{1}{2}}|u|^{-\frac{1}{2}}I_4+ \delta^{\frac{5}{8}}|u|^{-\frac{3}{4}}M^{\frac{3}{2}}.
\end{split}
\end{equation}

We then combine \eqref{ES-4-1-b} and \eqref{ES-4-2-b} to conclude
\begin{equation}\label{ES fourth derivative E4 Eb4}
E_4(u) + \Eb_4(\ub) \lesssim I_4 +  \delta^{\frac{1}{8}}|u|^{-\frac{1}{4}} M^{\frac{3}{2}}.
\end{equation}

\subsection{End of the Bootstrap Argument}\label{Section End of the Bootstrap Argument}
We add the estimates in previous sections together, since $|u| \geq 1$, we derive
\begin{equation*}
\sum_{i=1}^4 [E_i(u) + \Eb_i(\ub)]+\sum_{j=2}^4 [F_j(u) + \Fb_j(\ub)] \lesssim I_4 + \delta^{\frac{1}{8}}M^{\frac{3}{2}}.
\end{equation*}
By the definition of $M$ from the bootstrap assumption \eqref{bootstrap assumption}, we obtain
\begin{equation*}
M \lesssim I_4 + \delta^{\frac{1}{8}}M^{\frac{3}{2}}.
\end{equation*}
By choosing $\delta$ suitably small depending on the quantities $I_4$, we conclude that, there is a constant $C(I_4)$ depending only on $I_4$, such that
\begin{equation*}
\sum_{i=1}^4 [E_i(u) + \Eb_i(\ub)]+\sum_{j=2}^4 [F_j(u) + \Fb_j(\ub)] \leq C(I_4).
\end{equation*}
Therefore, we have completed the proof of \textbf{Main A priori Estimates}.

\subsection{Higher Order Derivative Estimates}\label{Section Higher Order Derivative Estimates}
For higher order derivative estimates, the argument is completely analogous, in fact much simpler, because we have already closed the bootstrap argument and we can simply use an induction argument to derive estimates for each order. Therefore, we shall omit the detail and only sketch the proof. We introduce a family of energy flux norms for higher order derivatives:
\begin{equation*}
\begin{split}
E_{k}(u,\ub) &=|u|^{k-1} \|L \nablaslash^{k-1} \varphi\|_{L^2(C_u)} + \delta^{-\frac{1}{2}} |u|^{k-\frac{1}{2}}\|\nablaslash^{k} \varphi\|_{L^2(C_u)},\\
\Eb_{k}(u,\ub) &=|u|^{k-1} \|\nablaslash^{k} \varphi\|_{L^2(\Cb_{\ub})} + \delta^{-\frac{1}{2}} |u|^{\frac{1}{2}}\||u|^{k-1}\Lb \nablaslash^{k-1} \varphi\|_{L^2(\Cb_{\ub})},
\end{split}
\end{equation*}
for all $k \geq 1$. Similar to the lower order derivatives cases, we also need a family of flux norms involving at least two null derivatives:
\begin{equation*}
\begin{split}
F_{k}(u,\ub) &=\delta|u|^{k-2} \|L^2 \nablaslash^{k-2}\varphi\|_{L^2(C_u)}, \\
\Fb_{k}(u,\ub) &=|u|^{\frac{1}{2}} \||u|^{k-2}\Lb^2 \nablaslash^{k-2}\varphi\|_{L^2(\Cb_{\ub})},
\end{split}
\end{equation*}
for all $k \geq 2$.

To achieve the high order derivative estimates, we will perform an induction argument. The beginning cases for the induction argument have already been verified, this is indeed the \textbf{Main A priori Estimates} obtained earlier, that is,
\begin{equation*}
\sum_{i=1}^4 [E_i(u,\ub) + \Eb_i(u,\ub)]+\sum_{j=2}^4 [F_j(u,\ub) + \Fb_j(u,\ub)] \leq C(I_{4}),
\end{equation*}
where $I_4$ is the size of the data up to four derivatives. The higher order estimates are formulated as follows:
\begin{proposition}\label{main estimate high estimates by induction} If $\delta$ is sufficiently small which may depend only on $k$, for all data of \eqref{Main Equation} and all $I_{n+2} \in \mathbb{R}_{>0}$ satisfying
\begin{equation}\label{initial bound of high order}
\sum_{i=1}^{n+2} E_i(u_{0},\delta)  + \sum_{j=2}^{n+2} F_j(u_{0},\delta)  \leq I_{n+2},
\end{equation}
there exists a constant $C(I_{n+2})$ depending only on $I_{n+2}$ (in particular, not on $\delta$ and $u_0$), so that
\begin{equation}\label{main estimates of high order}
 [E_{n+2}(u,\ub) + \Eb_{n+2}(u,\ub)]+ [F_{n+2}(u,\ub) + \Fb_{n+2}(u,\ub)] \leq C(I_{n+2}),
\end{equation}
for all $u \in [u_0, -1]$ and $\ub \in [0, \delta]$ in the sense of a priori estimates.
\end{proposition}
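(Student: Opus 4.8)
\emph{The plan is to argue by induction on $n\geq 2$.} For $n\leq 2$ the conclusion \eqref{main estimates of high order} is contained in the \textbf{Main A priori Estimates} already established (which control all orders up to four), so we may assume the full lower-order bound $\sum_{i=1}^{n+1}[E_i(u,\ub)+\Eb_i(u,\ub)]+\sum_{j=2}^{n+1}[F_j(u,\ub)+\Fb_j(u,\ub)]\leq C(I_{n+1})$ with $C(I_{n+1})$ independent of $\delta$ and $u_0$. Exactly as in Section~\ref{section Preliminary Estimates}, feeding these into the Sobolev inequalities of Section~\ref{Soboleve and Gronwall} produces $L^\infty$ and $L^4$ control of all derivatives of order $\leq n+1$, but now with honest constants (depending only on $I_{n+1}$) in place of the bootstrap constant $M$; in particular $\|L\nablaslash^{j}\varphi\|_{L^\infty}\lesssim\delta^{-\frac12}|u|^{-1-j}$ while for $j\leq n$ every other first-order derivative of $\nablaslash^{j}\varphi$ carries a positive power of $\delta$ in $L^\infty$. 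This is the only structural input needed: after differentiating \eqref{Main Equation}, the \emph{bad} quantities are exactly the $L\nablaslash^{j}\varphi$, and the null structure \eqref{null form bound} forbids two of them from multiplying each other.

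\emph{Energy identities.} For $E_{n+2},\Eb_{n+2}$ we commute $\Omega^{n+1}$ with \eqref{Main Equation} (take $n\mapsto n+1$ in \eqref{commute n Omega with main equation}) and run the scheme of Section~\ref{Energy estimates scheme} with $X=L$, resp. $X=\Lb$; for $F_{n+2},\Fb_{n+2}$ we commute $L\Omega^{n}$, resp. $\Lb\Omega^{n}$, using \eqref{commute L Omega n with main equation}--\eqref{commute Lb Omega n with main equation}, with $X=L$, resp. $X=\Lb$. In each case \eqref{fundamental energy identity} writes the order-$(n+2)$ flux through $C_u$ and $\Cb_{\ub}$ as the data flux on $C_{u_0}$ (controlled by $I_{n+2}$ via \eqref{initial bound of high order}) plus spacetime error integrals of four kinds: the quadratic term $Q(\nabla\Omega^{n+1}\varphi,\nabla\varphi)$ expanded by \eqref{null form bound}; genuinely lower-order quadratic terms $Q(\nabla\Omega^{p}\varphi,\nabla\Omega^{q}\varphi)$ with $p,q<n+1$; commutator terms obeying \eqref{quadratic form bound}, which gain a factor $|u|^{-1}$; and linear terms from $[\Box,L]$, $[\Box,\Lb]$ and the deformation tensors, of order $\leq n+1$ with extra $|u|^{-1}$ or $|u|^{-2}$ decay. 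For every one of these except the single term isolated below, one places the top-order factor in $L^2$ and the remaining factor(s) in $L^\infty$ or $L^4$ via the induction hypothesis; since any top-order factor other than $L\nablaslash^{j}\varphi$, and every lower-order factor other than $L\nablaslash^{j}\varphi$, is small, this yields a positive power of $\delta$. The residual linear term $\frac1r L\Omega^{n+1}\varphi\cdot\Lb\Omega^{n+1}\varphi$ (and its $F$-analogue) is treated as the $T$-terms of Section~\ref{Section Estimates on Fb_3 and Fb_4}, by the Cauchy--Schwarz splitting $\frac{1}{r}|L\Omega^{n+1}\varphi|\,|\Lb\Omega^{n+1}\varphi|\lesssim\frac{\delta}{|u|^{2}}|L\Omega^{n+1}\varphi|^{2}+\frac{1}{\delta}|\Lb\Omega^{n+1}\varphi|^{2}$, absorbing the first piece into the already-estimated $E$-flux and the second into Gronwall in $\ub$; the $F_{n+2},\Fb_{n+2}$ estimates follow the argument of Section~\ref{Section Estimates on Fb_3 and Fb_4} verbatim, including the auxiliary $L^2$ bound on $L\Lb\Omega^{n}\varphi$ read off from the equation in null frame.

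\emph{The main obstacle} is the term $\doubleint_{\mathcal D}|L\varphi|\,|\nablaslash\Omega^{n+1}\varphi|\,|L\Omega^{n+1}\varphi|$ in the $E_{n+2}$ estimate (and its analogue $\doubleint_{\mathcal D}|L\varphi|\,|\nablaslash\Omega^{n+1}\varphi|\,|\Lb\Omega^{n+1}\varphi|$ in the $\Eb_{n+2}$ estimate): because the estimates are relaxed, $\|\nablaslash\Omega^{n+1}\varphi\|_{L^2}$ is only of size $O(1)$ rather than $O(\delta^{1/2})$, so naively pairing it with $\|L\varphi\|_{L^\infty}\sim\delta^{-1/2}|u|^{-1}$ gives no positive power of $\delta$. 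The remedy is exactly the reductive structure of the $E_4/\Eb_4$ estimates in Section~\ref{Section An A priori Estimates up to Third Derivatives}: write
\begin{equation*}
|L\varphi|\,|\nablaslash\Omega^{n+1}\varphi|\,|L\Omega^{n+1}\varphi|\;\lesssim\;|u'|^{2}|L\varphi|^{2}\,|\nablaslash\Omega^{n+1}\varphi|^{2}+|u'|^{-2}\,|L\Omega^{n+1}\varphi|^{2},
\end{equation*}
integrate the first summand over the incoming cones $\Cb_{\ub'}$, $\ub'\in[0,\delta]$ --- the length-$\delta$ range of the $\ub'$-integration turns the $\delta^{-1}$ coming from $|u'|^{2}\|L\varphi\|_{L^\infty}^{2}$ into an $O(1)$ coefficient --- and integrate the second over $C_{u'}$; both are then in the form required by the second version of Gronwall's inequality recalled in Section~\ref{Soboleve and Gronwall}, applied to the coupled quantity $\|L\Omega^{n+1}\varphi\|_{L^2(C_u)}^2+\|\nablaslash\Omega^{n+1}\varphi\|_{L^2(\Cb_{\ub})}^2$ (and, for $\Eb_{n+2}$, using the bound on $E_{n+2}$ already obtained, as in \eqref{ES-4-1-b}). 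Collecting all the estimates, the right-hand side is bounded by $I_{n+2}$ plus a positive power of $\delta$ times the order-$(n+2)$ norms themselves; choosing $\delta$ small depending on $n$ and $I_{n+2}$ absorbs the latter, which gives \eqref{main estimates of high order} and closes the induction.
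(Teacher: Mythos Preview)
Your proposal follows the same inductive strategy as the paper's sketch, and the treatment of the ``main obstacle'' term via the Cauchy--Schwarz splitting and two-variable Gronwall mirrors the paper's handling of $R_3$ in the $E_4/\Eb_4$ estimates. There is, however, one overstatement that matters. You assert that the induction hypothesis, fed through the Sobolev inequalities, ``produces $L^\infty$ and $L^4$ control of all derivatives of order $\leq n+1$.'' It does not: $L^2$ control up to order $n+1$ yields only $L^4$ control up to order $n$ and $L^\infty$ control up to order $n-1$ (each Sobolev step costs one derivative). Consequently, in the $S$-type terms $Q(\nabla\Omega^{p}\varphi,\nabla\Omega^{q}\varphi)\,L\Omega^{n+1}\varphi$ with $p=n$ (so $q\leq 1$), the factor $\nabla\Omega^{n}\varphi$ sits at level $n+1$ and cannot be placed in $L^4$ ``via the induction hypothesis'' as you claim.

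The paper closes this gap by running an explicit bootstrap at level $n+2$: it posits \eqref{nth bootstrap assumption} with constant $M$, which then supplies the missing $L^4$ estimates at level $n+1$ and $L^\infty$ at level $n$ (with the remark that for strictly lower indices one may replace $M$ by $C(I_{n+1})$), and closes with $[E_{n+2}+\cdots]\lesssim I_{n+2}+\delta^{1/8}M^{3/2}$. Your final absorption step ``$\delta$ small absorbs the order-$(n+2)$ norms'' is this bootstrap in disguise, but you invoke it only at the end; you need it (or the alternative $L^\infty\times L^2\times L^2$ placement, putting $\nabla\Omega^{n}\varphi$ in $L^2$ from $E_{n+1}$ and $\nabla\Omega^{q}\varphi$ in $L^\infty$) already when estimating the $p=n$ contributions. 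With that correction, your argument coincides with the paper's.
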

\begin{remark}
The index in $I_{n+2}$ indicates the number of derivatives needed in the energy. The small parameter $\delta$ may depend on $n$. In applications, since we only need the bound on at least $10$ derivatives on the solutions, we can ignore this dependence.
\end{remark}

\begin{proof} We now sketch the proof. Once again, we make the following bootstrap assumption:
\begin{equation}\label{nth bootstrap assumption}
[E_{n+2}(u,\ub) + \Eb_{n+2}(u,\ub)]+ [F_{n+2}(u,\ub) + \Fb_{n+2}(u,\ub)] \leq M,
\end{equation}
for all $u$ and $\ub$ where $M$ is sufficiently large.

We proceed as before. First of all, we can derive preliminary estimates for higher order derivatives of $\varphi$, i.e. the $L^\infty$ estimates up to $n$-th order derivatives and $L^4$ estimates up to $(n+1)$-th order derivatives. Those estimates are simply from the Sobolev inequalities and they are listed as follows
\begin{equation*}
 \delta^{\frac{1}{2}}|u|^{i}\|L\nablaslash^{i-1}\varphi\|_{L^\infty} + \delta^{-\frac{1}{4}}|u|^{\frac{3}{4}+i}\|\nablaslash^{i}\varphi\|_{L^\infty} +  \delta^{-\frac{1}{4}}|u|^{\frac{3}{2}}\||u|^{i-1}\Lb\nablaslash^{i-1}\varphi\|_{L^\infty} \lesssim M,
\end{equation*}
\begin{equation*}
\delta^{\frac{1}{2}}|u|^{\frac{1}{2}+j}\| L\nablaslash^{j} \varphi \|_{L^{4}(S_{\ub,u})} +\delta^{-\frac{1}{4}}|u|^{\frac{5}{4}+j} \|\nablaslash^{j+1} \varphi\|_{L^{4}(S_{\ub,u})} + \delta^{-\frac{1}{4}}|u|^{2}\||u|^{j}\Lb\nablaslash^{j}\varphi \|_{L^{4}(S_{\ub,u})} \lesssim M,
\end{equation*}
for all $i, j \in \{2,3, \cdots, n\}$. In fact, based on the induction argument, we know that if $i$ or $j$ strictly less than $n$, we can replace the right hand sides of the above estimates by a constant depending only on $I_{n+1}$ instead of $M$.

Secondly, we can perform the similar arguments as in previous sections to obtain energy estimates, this lead to the following estimates
\begin{equation*}
\begin{split}
F_{n+2}(u,\ub) & \lesssim I_{n+2}  + \delta^{\frac{1}{8}}|u|^{-\frac{1}{4}}M^\frac{3}{2},\\
\Fb_{n+2}(u,\ub) &\lesssim  I_{n+2}\delta^{\frac{1}{2}}+\delta^{\frac{1}{8}}|u|^{-\frac{1}{4}}M^{2},\\
E_{n+2}(u,\ub) + \Eb_{n+2}(u,\ub) &\lesssim I_{n+2} +  \delta^{\frac{1}{8}}|u|^{-\frac{1}{4}} M^{\frac{3}{2}}.
\end{split}
\end{equation*}
Therefore, we can complete the proof by taking a sufficiently small $\delta$.
\end{proof}

\section{Existence of Solutions}\label{Section existence and uniqueness}

\subsection{Existence in Region 2}
In this section, based on the a priori estimates in last section, we first show that \eqref{Main Equation} with data prescribed on $C_{u_0}$ where $u_0 \leq \ub \leq \delta$ in last sections can be solved all the way up the $t=-1$, i.e. the Region $2$. Recall that Region $2$ is in the future domain of dependence of $\Cb_0$ and $C_{u_0}$ (with $0 \leq \ub \leq \delta$) and the data on $\Cb_0$ is completely trivial.

To start, we use the local existence result \cite{R-90} of Rendall for semi-linear wave equations for characteristic data, we know that there exists a solution around $S_{0, u_0}$, say, defined in the region enclosed by $\Cb_{0}$, $C_{u_0}$ and $t = u_0 + \varepsilon$ with $\varepsilon << \delta$. Thanks to the a priori estimates, if at the beginning we assume the bound on data for at least $10$ derivatives, the $L^\infty$ norms of at least up to $8$ derivatives of the solution are bounded by the data on $t = u_0 + \varepsilon$. Therefore, we can solve a Cauchy problem with data prescribed on $t = u_0 + \varepsilon$ to construct a solution in the future domain dependence of $t = u_0 + \varepsilon$ whose boundary consists of two null hypersurfaces $C_{u_0 + \varepsilon}$ and $\Cb_{\varepsilon}$. Now we have two characteristic problem: for the first one, the data is prescribed on $\Cb_0$ and $C_{u_0 + \varepsilon}$; for the second one, the data is prescribed on $C_{u_0}$ and $\Cb_{\varepsilon}$. We can use Rendall's local existence result again to solve them around $S_{0, u_0+\varepsilon}$ and $S_{\varepsilon, u_0}$. In this way, we can actually push the solution to $t = u_0 + \varepsilon + \varepsilon'$ with another small $\varepsilon'$.

We then can repeat the above process in an obvious way to push the solution all the way to $t = u_0 + \delta$. Similarly, we can then push it from $t = u_0 + \delta$ to $t= -1$. Therefore, we have constructed a solution in the entire Region $2$. We remark that this process depends crucially on the a priori estimates since the $L^\infty$ norms of the derivatives of $\varphi$ is guaranteed to be bounded.

Therefore, for a finite $u_0$, a solution has been constructed in Region 2.\\

If we restrict the above solution to $\Cb_{\delta}$, i.e. the future incoming null boundary of Region $2$, it gives partially the initial data for \eqref{Main Equation} in Region $3$. We now give a detailed description of the data on $\Cb_{\delta}$.
 \begin{proposition}\label{data on Cb_delta}   Assume we have bounds on $E_i(u_0,\delta)$ and $F_i(u_0,\delta)$ for $i \leq n+2$ for some fixed $n \geq 10$ (say n=10). Then, for all $p \geq 1$ and $q$ with $p+q \leq n-1$, we have
\begin{equation*}
\begin{split}
\|\nablaslash \Omega^q \varphi\|_{L^\infty(\Cb_{\ub})}&\lesssim \delta^{\frac{1}{2}}|u|^{-2}, \quad \text{for all} \quad \ub \in [0, \delta];\\
\|\Lb^{p} \Omega^q \varphi\|_{L^\infty(\Cb_{\ub})}&\lesssim \delta^{\frac{1}{2}}|u|^{-p-1}, \quad \text{for all} \quad \ub \in [0, \delta];\\
\|L^p\Omega^{q} \varphi\|_{L^\infty(\Cb_{\delta})}&\lesssim \delta^{\frac{1}{2}}|u|^{-1}.
\end{split}
\end{equation*}
\end{proposition}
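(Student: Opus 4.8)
The plan is to feed the $L^2$ information provided by the \textbf{Main A priori Estimates} and the higher order Proposition~\ref{main estimate high estimates by induction}---both available on every $C_u$ and $\Cb_{\ub}$ lying in Region~$2$ once the data on $C_{u_0}$ is bounded up to $n+2$ derivatives---through the Sobolev inequalities of Section~\ref{Soboleve and Gronwall}, and then to sharpen the power of $\delta$ by integrating across the pulse. The Sobolev chain $L^2(C_u)\to L^4(S_{\ub,u})\to L^\infty(S_{\ub,u})$ costs two derivatives and yields \emph{crude} pointwise bounds for all derivatives of $\varphi$ up to order $n$; the only ones that matter below are the ``bad'' outgoing bounds $\|L\nablaslash^k\varphi\|_{L^\infty(S_{\ub,u})}\lesssim\delta^{-\frac12}|u|^{-k-1}$, which carry the large factor $\delta^{-1/2}$ that the energy norms attach to $L$-derivatives. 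The upgrading mechanism is that every derivative of $\varphi$ vanishes identically on $\Cb_0$ (since $\varphi\equiv0$ there to infinite order), so for any derivative operator $\Gamma$ and any $\ub\le\delta$ we have $\Gamma\varphi(u,\ub,\theta)=\int_0^{\ub}L\Gamma\varphi(u,\ub',\theta)\,d\ub'$, whence
\[
\|\Gamma\varphi\|_{L^\infty(S_{\ub,u})}\ \le\ \delta\sup_{0\le\ub'\le\delta}\|L\Gamma\varphi\|_{L^\infty(S_{\ub',u})};
\]
inserting a crude $\delta^{-1/2}$ bound on the right and gaining a factor $\delta$ from the $\delta$-thick pulse turns it into a $\delta^{1/2}$ bound on the left. (All three estimates of the proposition are to be read as pointwise bounds on each sphere $S_{\ub,u}$.)

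For the first estimate take $\Gamma=\nablaslash\Omega^q$: using $|\Omega^q\phi|\sim|u|^q|\nablaslash^q\phi|$ from \eqref{Compare Omega and nablaslash}, one gets $\|\nablaslash\Omega^q\varphi\|_{L^\infty(S_{\ub,u})}\lesssim\delta\,\|L\nablaslash\Omega^q\varphi\|_{L^\infty}\lesssim\delta\cdot|u|^{q}\cdot\delta^{-\frac12}|u|^{-q-2}=\delta^{\frac12}|u|^{-2}$. For the second estimate I induct on $p\ge1$. Commuting the null-frame form \eqref{Main Equation in null frame} of the equation with $\Omega^q$ (the rotations commute with $\nablaslash,L,\Lb$ and with $1/r$ by \eqref{commutates}) gives, schematically, $L\Lb\Omega^q\varphi=\laplacianslash\Omega^q\varphi+\tfrac1r\big(L\Omega^q\varphi-\Lb\Omega^q\varphi\big)-\!\!\sum_{p'+q'\le q}\!\!Q(\nabla\Omega^{p'}\varphi,\nabla\Omega^{q'}\varphi)$. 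Applying $\Lb^{p-1}$ and distributing it over the weights $r^{-j}$ (each $\Lb$ hitting $r^{-j}$ produces one more $r^{-1}$, since $\Lb r=-1$, and $\Lb$ commutes past $L$) expresses $L\Lb^p\Omega^q\varphi$, modulo one self-referential term $r^{-1}\Lb^p\Omega^q\varphi$, as a sum of quantities each with strictly fewer than $p$ incoming derivatives, together with $\Lb$-derivatives of the null forms. Every such quantity is controlled by the crude estimates and the inductive hypothesis---the worst being $r^{-1}L\Lb^{p-1}\Omega^q\varphi\lesssim\delta^{-\frac12}|u|^{-p-1}$---and the null structure is used only to exclude products of two outgoing factors, so no uncontrolled $L\varphi\cdot L\varphi$ survives. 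Integrating in $\ub$ and absorbing the self-referential term (whose $\ub$-integral over $[0,\delta]$ has coefficient $\lesssim\delta/|u|$) by a harmless Gronwall inequality in $\ub$ gives $\|\Lb^p\Omega^q\varphi\|_{L^\infty(S_{\ub,u})}\lesssim\delta\cdot\delta^{-\frac12}|u|^{-p-1}=\delta^{\frac12}|u|^{-p-1}$.

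For the third estimate, which lives on $\Cb_\delta$, I instead use the equation as a transport equation \emph{along} $\Cb_\delta$: since $\Lb=\partial_u$ at fixed $\ub$ and $[L,\Lb]=0$, the $\Omega^q$-commuted equation reads, on $\Cb_\delta$, $\Lb(L\Omega^q\varphi)=\laplacianslash\Omega^q\varphi+\tfrac1r L\Omega^q\varphi-\tfrac1r\Lb\Omega^q\varphi-\sum Q(\nabla\Omega^{p'}\varphi,\nabla\Omega^{q'}\varphi)$. Because $\psi_0$ is compactly supported in $(0,1)$, the datum and all its derivatives vanish at the corner $S_{\delta,u_0}=\Cb_\delta\cap C_{u_0}$, so $L\Omega^q\varphi(u_0,\delta,\cdot)=0$; integrating from $u_0$ and treating the linear term $\tfrac1r L\Omega^q\varphi$ by Gronwall yields $\|L\Omega^q\varphi(u,\delta,\cdot)\|_{L^\infty}\lesssim\tfrac1{|u|}\int_{u_0}^{u}|u'|\,\|(\text{source})(u',\delta,\cdot)\|_{L^\infty}\,du'$, in which the Gronwall exponential $\exp(\int_{u_0}^u r^{-1}du')\sim|u_0|/|u|$ is exactly cancelled by the $|u'|$ coming from the integrating factor---this cancellation is what keeps the estimate uniform in $u_0$. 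Feeding in the sharp bounds on $\laplacianslash\Omega^q\varphi$ and $\Lb\Omega^q\varphi$ from the first two parts and the crude bound on $L\varphi$ into the null forms gives $\|L\Omega^q\varphi\|_{L^\infty(\Cb_\delta)}\lesssim|u|^{-2}$; a second pass that re-uses this bound inside the null forms (where, by the null condition, $L\varphi$ never multiplies itself) improves the sources to size $\lesssim\delta^{\frac12}|u'|^{-3}$ and hence to $\|L\Omega^q\varphi\|_{L^\infty(\Cb_\delta)}\lesssim\delta^{\frac12}|u|^{-2}\le\delta^{\frac12}|u|^{-1}$. The cases $p\ge2$ follow by the same transport argument applied to the $L^{p-1}\Omega^q$-commuted equation, with an induction on $p$, using the equation to express the transversal derivatives that the energy norms do not directly carry. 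Altogether the scheme costs roughly three derivatives (two to Sobolev, one to the commutations with the equation), which is why the hypothesis is at order $n+2$ and the conclusion at order $p+q\le n-1$.

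The main obstacle I anticipate is the transport estimate on $\Cb_\delta$: one has to track the $r^{-1}$ weights precisely so that the Gronwall factor $|u_0|/|u|$ genuinely cancels and the bounds stay uniform as $u_0\to-\infty$, and one has to carry out the estimates in the correct order---angular derivatives first, then incoming derivatives via the inductive $\ub$-integration, and only then the second pass on the outgoing ones---so that every source term is dominated by an estimate already in hand.
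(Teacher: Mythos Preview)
Your proposal is correct and for the second and third inequalities follows essentially the same route as the paper: the $\Lb^p\Omega^q$ bounds are obtained by induction on $p$ via Gronwall along $L$ (with trivial data on $\Cb_0$), and the $L^p\Omega^q$ bounds on $\Cb_\delta$ are obtained by transport along $\Lb$ with trivial data at the corner $S_{\delta,u_0}$ coming from the compact support of $\psi_0$. Your two-pass mechanism for the third estimate is slightly more elaborate than necessary (an induction on $q$, feeding the already-improved $L\Omega^{p'}\varphi$ bounds for $p'<q$ into the null forms, delivers the $\delta^{1/2}$ in a single integration), but it is not wrong.

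Where you genuinely depart from the paper is in the \emph{first} inequality. The paper recovers the sharp $\delta^{1/2}$ for $\nablaslash\Omega^q\varphi$ by integrating the $L^2(S_{\ub,u})$ norm along $\Lb$ from $C_{u_0}$, using the \emph{unrelaxed} initial bound $\|\nablaslash^{k+1}\varphi\|_{L^\infty(C_{u_0})}\lesssim\delta^{1/2}|u_0|^{-k-2}$ together with the $\Lb$-energy $\Eb_{k+2}$ (which already carries a $\delta^{1/2}$), and only then applies Sobolev on the sphere. You instead integrate along $L$ from $\Cb_0$ and trade the crude $\delta^{-1/2}$ in $\|L\nablaslash\Omega^q\varphi\|_{L^\infty}$ for the width $\delta$ of the pulse. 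Both arguments are valid; yours is shorter and more in the spirit of the mechanism you already use for the $\Lb^p$ bounds, while the paper's route has the side benefit of yielding the improved $L^2(C_u)$ and $L^2(\Cb_{\ub})$ norms \eqref{Improved L2-nablaslash-varphi} for $\nablaslash\Omega^k\varphi$, which are of independent use later. Either way the derivative count is the same.
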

\begin{remark}
The smallness on $L^p \Omega^{q} \varphi$ only holds on the final hypersurface $\Cb_{\delta}$.
\end{remark}
\begin{proof}
First of all, by losing one derivatives, we can retrieve a better $L^{2}(S_{\ub, u})$ estimates for $\nablaslash\Omega^k\varphi$ than in the relaxed propagation estimates.
For $k\leq n$, let
\begin{equation*}
h(\ub, u)\triangleq \|\nablaslash\Omega^k\varphi\|_{L^{2}(S_{\ub,u})},
\end{equation*}
thus,
\begin{equation*}
\begin{split}
\Lb h^{2}(\ub, u)&=\int_{S_{\ub,u}}\Lb(\nablaslash\Omega^k\varphi)^{2}
-\frac{2}{r}\int_{S_{\ub,u}}(\nablaslash\Omega^k\varphi)^{2}\\
& \leq 2\|\Lb\nablaslash\Omega^k\varphi\|_{L^{2}(S_{\ub,u})}\cdot h(\ub, u).
\end{split}
\end{equation*}
where the factor $-\dfrac{1}{r}$ appearing in the second integral is the mean curvature of the incoming hypersurface $\Cb_{\delta}$. Therefore, we have
\begin{equation*}
\frac{\partial}{\partial u}h(\ub, u)\leq \|\Lb\nablaslash\Omega^k\varphi\|_{L^{2}(S_{\ub,u})}.
\end{equation*}
We now integrate to derive
\begin{equation*}
\begin{split}
h(\ub, u)&\leq h(\ub, u_{0})+ \int_{u_0}^{u}\frac{1}{|u'|}\||u'|\Lb\nablaslash\Omega^k\varphi\|_{L^{2}(S_{\ub,u'})}du'\\
&\lesssim h(\ub, u_{0})+ (\int_{u_0}^{u}\frac{1}{|u'|^2} du')^{\frac{1}{2}}\|\Lb\Omega^{k+1}\varphi
\|_{L^{2}(\Cb_{\ub})}\\
&\lesssim \delta^{\frac{1}{2}}|u|^{-2}.
\end{split}
\end{equation*}
For the last line, we have used the bound on the data on $C_{u_0}$ instead of the relaxed propagation estimates. It is precisely at this point one retrieves a $\delta^{\frac{1}{2}}$.

As a consequence, we also retrieve an improved $L^{2}(C_{u})$ and $L^{2}(\Cb_{\ub})$ estimates for $\nablaslash\Omega^k\varphi$:
\begin{equation}\label{Improved L2-nablaslash-varphi}
\begin{split}
\|\nablaslash\Omega^k\varphi\|_{L^{2}(C_{u})}&=(\int_{0}^{\delta}\|\nablaslash\Omega^k\varphi\|^2_{L^2(S_{\ub,u})}d\ub)^\frac{1}{2} \lesssim \delta |u|^{-\frac{3}{2}},\\
\|\nablaslash\Omega^k\varphi\|_{L^{2}(\Cb_{\ub})}&=(\int_{u_{0}}^{u}\|\nablaslash\Omega^k\varphi\|^2_{L^2(S_{\ub,u})}du' )^\frac{1}{2}\lesssim \delta^{\frac{1}{2}}|u|^{-1}.
\end{split}
\end{equation}
According to the Sobolev inequalities, we then obtain
\begin{equation*}
    \|\nablaslash\Omega^q \varphi\|_{L^\infty(\Cb_{\ub})}\lesssim \delta^{\frac{1}{2}}|u|^{-\frac{3}{2}}.
\end{equation*}
This proves the first inequality.\\

For the second inequality, we simply integrate $L(\Lb^{p+1} \Omega^q \varphi)$. To illustrate the idea, we only consider the case where $q=0$. The other cases can be treated exactly in the same way. We commute $\Lb$ with \eqref{Main Equation} $p$ times to derive
\begin{equation}\label{commute Lb n times with equation}
\begin{split}
 L|\Lb^{p+1}  \varphi|& \leq |L\Lb^{p+1}  \varphi|\\
 &\lesssim (\frac{1}{r}+|L\varphi|+|\nablaslash \varphi|)|\Lb^{p+1}  \varphi|+(\frac{1}{r}+|\Lb\varphi|+|\nablaslash \varphi|)|L\Lb^{p}\varphi|+|\nablaslash \Lb^p\varphi||\nablaslash \varphi|  + \text{l.o.t.}.
\end{split}
\end{equation}
where the lower order terms \text{l.o.t.} can be determined inductively. For example, when $p=0,$
\begin{equation*}
\text{l.o.t.} = |\laplacianslash \varphi|.
\end{equation*}
when $p=1,$
\begin{equation*}
\begin{split}
\text{l.o.t.} &\lesssim |Q(\nabla \Lb\varphi, \nabla \varphi)| +\frac{1}{r} |Q(\nabla \varphi, \nabla \varphi)| \\
&+\frac{1}{r^2}(|L\varphi|+|\Lb\varphi|)+|\laplacianslash \Lb\varphi|+\frac{1}{r}|\laplacianslash \varphi|.
\end{split}
\end{equation*}
when $p=2,$
\begin{equation*}
\begin{split}
\text{l.o.t.} &\lesssim |Q(\nabla \Lb^2\varphi, \nabla \varphi)|+|Q(\nabla \Lb\varphi, \nabla \Lb\varphi)|+\frac{1}{r} |Q(\nabla \Lb\varphi, \nabla \varphi)|  + \frac{1}{r^2} |Q(\nabla \varphi, \nabla \varphi)| \\
&+\frac{1}{r^2}(|L\Lb\varphi|+|\Lb^2\varphi|)+\frac{1}{r^3}(|L\varphi|+|\Lb\varphi|)+|\laplacianslash \Lb^2\varphi|+\frac{1}{r}|\laplacianslash \Lb\varphi|+\frac{1}{r^2}|\laplacianslash \varphi|.
\end{split}
\end{equation*}
For $p = 0$, we have the following estimates
\begin{equation*}
\begin{split}
 \|\text{l.o.t.}\|_{L^\infty(\Cb_{\ub})}&\lesssim \delta^{\frac{1}{2}}|u|^{-2},\\
    \|(\frac{1}{r}+|\Lb\varphi|+|\nablaslash \varphi|) L\varphi\|_{L^\infty(\Cb_{\ub})}&\lesssim\delta^{-\frac{1}{2}}|u|^{-2},\\
    |\nablaslash\varphi|^2&\lesssim \delta^{\frac{1}{2}}|u|^{-2}.
\end{split}
\end{equation*}
We integrate along $L$ and use Gronwall's inequality to yield the correct estimates for $\Lb \varphi$. That is
\begin{equation*}
|\Lb \varphi|\lesssim \delta^{\frac{1}{2}}|u|^{-2}.
\end{equation*}
 Moreover, substitute this result into \eqref{commute Lb n times with equation}, it leads to
\begin{equation*}
|L\Lb \varphi|\lesssim \delta^{-\frac{1}{2}}|u|^{-2}.
\end{equation*}
In general, we proceed inductively, assume
\begin{equation*}
\begin{split}
  |\Lb^p \varphi|&\lesssim \delta^{\frac{1}{2}}|u|^{-p-1},\\
  |L\Lb^p \varphi|&\lesssim \delta^{-\frac{1}{2}}|u|^{-p-1}.
  \end{split}
\end{equation*}
Then for $p+1$ we have
\begin{equation*}
\begin{split}
 \|\text{l.o.t.}\|_{L^\infty(\Cb_{\ub})}&\lesssim \delta^{-\frac{1}{2}}|u|^{-p-2},\\
    \|(\frac{1}{r}+|\Lb\varphi|+|\nablaslash \varphi|)L\Lb^{p}\varphi\|_{L^\infty(\Cb_{\ub})}&\lesssim \delta^{-\frac{1}{2}}|u|^{-p-2}\\
    \||\nablaslash \Lb^p\varphi||\nablaslash \varphi|\|_{L^\infty(\Cb_{\ub})}&\lesssim \delta^{\frac{1}{2}}|u|^{-p-2}.
\end{split}
\end{equation*}
We then integrate along $L$ and use Gronwall's inequality to conclude. This proves the second inequality.\\

The third inequality is a little bit surprising, since on expects $L$ derivative causes a loss of $\delta^{-\frac{1}{2}}$, which can been directly from the special choice of the profile of the initial data. The idea is that the loss in $\delta$ only should occur from initial data but not from the energy estimates. Recall that the data is given by
\begin{equation*}
 \varphi(\ub, u_0, \theta) = \frac{\delta^{\frac{1}{2}}}{|u_0|}\psi_0 (\frac{\ub}{\delta}, \theta) ,
\end{equation*}
where the $\ub$-support of $\psi_0$ is inside $(0,1)$. Therefore, on $C_{u_0}$ near $S_{0,u_0}$, the data is completely trivial. In particular, $(L^i \Omega^j \varphi)(u_0, \delta, \theta) \equiv 0$. We then integrate \eqref{Main Equation} to get estimates on $\Cb_{\delta}$.

To illustrate the above idea, we now prove
\begin{equation*}
\|L \varphi\|_{L^\infty(\Cb_{\delta})} \lesssim \delta^{\frac{1}{2}}|u|^{-1}.
\end{equation*}

We rewrite \eqref{Main Equation} as
\begin{equation*}
 \Lb L \varphi  \leq  \frac{1}{r}|L\varphi| + |\Lb\varphi||L \varphi|  + |\nablaslash \varphi| |L\varphi| + \text{l.o.t.}
\end{equation*}
This can be view as an ODE for $L\varphi$ on $\Cb_\delta$ with trivial data on $S_{\delta,u_0}$. We observe that $\nablaslash^i \varphi$ and $\Lb \varphi$ are all of size $\delta^{\frac{1}{2}}|u|^{-2}$ and the l.o.t. is of size $\delta^{\frac{1}{2}}|u|^{-2}$  we can integrate above equation and use Gronwall's inequality to derive
\begin{equation*}
\|L \varphi\|_{L^\infty(\Cb_{\delta})} \lesssim \delta^{\frac{1}{2}}|u|^{-1}.
\end{equation*}
and
\begin{equation*}
\|L\Lb \varphi\|_{L^\infty(\Cb_{\delta})} \lesssim \delta^{\frac{1}{2}}|u|^{-2}.
\end{equation*}
In the same way, by induction, we deduce
\begin{equation*}
\begin{split}
    \|L^p\varphi\|_{L^\infty(\Cb_{\delta})}&\lesssim \delta^{\frac{1}{2}}|u|^{-1}, \\
\|\Lb L^p\varphi\|_{L^\infty(\Cb_{\delta})}&\lesssim \delta^{-\frac{1}{2}}|u|^{-2}.
\end{split}
\end{equation*}
When $q \geq 1$, we can proceed in the same manner. This completes the proof.
\end{proof}

\subsection{Existence in Region 3}
To show the existence of solution for \eqref{Main Equation}, we have to solve a small data problem with data prescribed on $\Cb_{\delta}$ and $C^+_{u_0}$. The data on $\Cb_{\delta}$ is induced from the solution in Region 2 and the smallness of $\delta$ leads to the smallness of the data; the data on $C^+_{u_0}$ is simply an extension by zero of the short pulse data prescribed on $C_{u_0}$, since $L\varphi$ and all higher order derivatives of $\varphi$ on $S_{\delta, u_0}$ are small (we have seen this in the proof of Proposition \ref{data on Cb_delta}), the data in on $C^+_{u_0}$ are also small.

We now prove a theorem similar to the classical small data results \cite{K-84} and \cite{K-85} of Klainerman. The approach we are going to use is inspired by the harmonic gauge based proof of nonlinear stability of Minkowski space-time from \cite{L-R-05} and \cite{L-R-10} of Lindblad and Rodnianski. Since all the arguments are more or less well-known and scattered in the literatures, we only sketch the key estimates.

The following picture will be helpful for the structure of the proof:

\includegraphics[width=4.5  in]{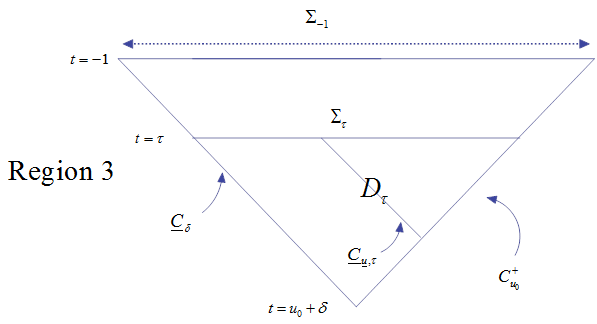}

Let $\Gamma$ denote one of the following vector fields:
\begin{equation*}
\Gamma \in \{\frac{\partial}{\partial t}, \frac{\partial}{\partial x_i}, \Omega_{ij}, \Omega_{0i}, S| i=1,2,3 \},
\end{equation*}
where
\begin{equation*}
\begin{split}
\Omega_{ij} =-\Omega_{ji}& = x_i \frac{\partial}{\partial x_j}-x_j\frac{\partial}{\partial x_i},\\
\Omega_{0i} =\Omega_{i0} &= t \frac{\partial}{\partial x_i}+x_i\frac{\partial}{\partial t},\\
S&=t \frac{\partial}{\partial t}+ r \frac{\partial}{\partial r} = \ub L + u\Lb.
\end{split}
\end{equation*}
For $\varphi$, we define the $k$-th conformal energy on $\Sigma_t$ as follows
\begin{equation*}
E_k^{(\text{conf})}(\varphi)(t) = (\sum_{|\alpha| \leq k} \int_{\Sigma_t} |\partial_t \Gamma^\alpha \varphi|^2 + \sum_{i=1}^3|\partial_i \Gamma^\alpha \varphi|^2 )^\frac{1}{2},
\end{equation*}
where $\Sigma_\tau = \{t = \tau \} \cap \,\text{Region}\,\,2$.

We also use $\partial \phi$ to denote all possible derivatives, i.e. $\partial \phi \in \{\nablaslash \phi, L\phi, \Lb \phi\}$; we use $\bar{\partial} \phi$ to denote good derivatives, i.e. $\bar{\partial} \phi \in \{\nablaslash \phi, \Lb \phi\}$.

Therefore, the classical Klainerman-Sobolev inequalities implies
\begin{equation*}
\|\partial \phi \|_{L^\infty(\Sigma_t)} \lesssim \frac{1}{(1+|u|)(1+|\ub|)^\frac{1}{2}} E_3^{(\text{conf})}(\phi).
\end{equation*}

When one commutes $\Gamma$'s with \eqref{Main Equation} $k$ times (k=8 suffices), we have
\begin{equation}\label{main equation gamma k}
\Box \Gamma^k \varphi = F_k
\end{equation}
where
\begin{equation*}
F_k = \sum_{|\beta| + |\gamma| \leq k} Q(\nabla \Gamma^\beta \varphi, \nabla \Gamma^\gamma \varphi),
\end{equation*}
where the $Q$'s are null forms and we have ignored the irrelevant numerical constants.

Once the following a priori estimates have been established, the rest of the proof will be routine. So we only give the details for the following estimates:
\begin{proposition}
For all $k \leq 8$, assume that the standard energy fluxes on $\Cb_{\delta}$ for \eqref{main equation gamma k} are all bounded above by $\varepsilon$ where $\varepsilon$ is a small positive constant. We also assume that such a solution of \eqref{Main Equation} exists in $\mathcal{D}_t$ where $u_0 + \delta \leq t \leq -1$ and $\mathcal{D}_t$ is the region below $\Sigma_t$. If $\varepsilon$ is sufficiently small, there is a universal constant $C_0$, such that
\begin{equation*}
\sum_{k=0}^8 \sup_{\tau \in [u_0 + \delta, t]} E_k^{(\text{conf})}(\varphi)(\tau) \leq C_0 \cdot \varepsilon.
\end{equation*}
\end{proposition}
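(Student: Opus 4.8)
The plan is to run a standard continuity/bootstrap argument on the conformal energies, using the conformal multiplier (the Morawetz vector field $K_0 = \ub^2 L + u^2 \Lb$, or equivalently working with the vector fields $\Gamma$ whose commutators with $\Box$ are under control) together with the null structure of $F_k$ to beat the slow decay of linear waves in $\mathbb{R}^{3+1}$. I would bootstrap the assumption $\sum_{k=0}^{8}\sup_{\tau}E_k^{(\mathrm{conf})}(\varphi)(\tau)\leq 2C_0\varepsilon$ on the slab $u_0+\delta\leq t\leq T$ and aim to improve the constant $2C_0$ to $C_0$, which closes by continuity since at $t=u_0+\delta$ the energy is $\leq C\varepsilon$ by hypothesis on the fluxes through $\Cb_\delta$ (and the trivial/small data on $C^+_{u_0}$).

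The key steps, in order, are: (i) From \eqref{main equation gamma k}, multiply by the conformal multiplier and integrate over $\mathcal{D}_\tau$ to get the differential inequality $\frac{d}{d\tau}\big(E_k^{(\mathrm{conf})}\big)^2 \lesssim \int_{\Sigma_\tau} |K_0 \Gamma^k\varphi|\,|F_k| + (\text{flux through }\Cb_\delta)$, where the flux term is $\lesssim \varepsilon^2$ by assumption. (ii) Use the null-form bound \eqref{null form bound}: in $F_k = \sum Q(\nabla\Gamma^\beta\varphi,\nabla\Gamma^\gamma\varphi)$ every term contains at least one \emph{good} derivative $\bar\partial\Gamma^{(\cdot)}\varphi \in \{\nablaslash,\Lb\}$, so we bound $|F_k|\lesssim \sum_{|\beta|+|\gamma|\leq k}\big(|\bar\partial\Gamma^\beta\varphi||\partial\Gamma^\gamma\varphi| + |\partial\Gamma^\beta\varphi||\bar\partial\Gamma^\gamma\varphi|\big)$. (iii) For the lower-order factor (at most $k/2\leq 4 < 8$ derivatives) use Klainerman–Sobolev, $\|\partial\Gamma^{\leq 4}\varphi\|_{L^\infty(\Sigma_t)}\lesssim (1+|u|)^{-1}(1+|\ub|)^{-1/2}E_7^{(\mathrm{conf})}\lesssim \varepsilon\,(1+|u|)^{-1}(1+|\ub|)^{-1/2}$, with the crucial gain that whenever the good derivative $\bar\partial$ falls on the low-order factor one gets the extra weight $(1+|\ub|)^{-1}$ rather than $(1+|\ub|)^{-1/2}$, and when it falls on the high-order factor the weight $\ub/u$ relating $\bar\partial$ to $S$-type vector fields is absorbed by the conformal multiplier. (iv) Cauchy–Schwarz the remaining two $L^2(\Sigma_\tau)$ factors against $E_k^{(\mathrm{conf})}$, producing $\frac{d}{d\tau}E_k \lesssim \frac{\varepsilon}{1+|\tau|}\sum_{j\leq k}E_j + \varepsilon^2$ (the weights $(1+|u|)^{-1}$ integrate to a logarithm which, combined with the $\varepsilon$, is harmless), and conclude by Grönwall that $\sup_\tau E_k^{(\mathrm{conf})}\lesssim \varepsilon$ with a universal constant, provided $\varepsilon$ is small.

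The main obstacle I expect is step (iii): carefully tracking which factor carries the good derivative and ensuring that in \emph{every} term of the null form $F_k$ the total weight in $(1+|\ub|)$ is at least $-1$ after using the conformal vector fields — i.e. that the conformal multiplier $K_0$ genuinely converts the one good $\bar\partial$ per null-form term into an integrable-in-$\tau$ factor uniformly up to $t=-1$. This is exactly the Klainerman vector-field method for the null condition, so it is known to work; the only care needed here is that we are in the truncated region $\{t\leq -1\}$ with characteristic data on $\Cb_\delta$ rather than Cauchy data at $t=0$, but since $|u|\geq 1$ throughout, all weights $(1+|u|)^{-1}$ are in fact bounded and the argument is, if anything, easier than the classical global one. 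The remaining parts of the existence proof in Region 3 — propagating the solution up to $t=-1$ via the a priori bound and local existence — are then routine.
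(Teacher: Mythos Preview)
Your proposal follows the classical Klainerman route (conformal multiplier $K_0=\ub^2 L+u^2\Lb$, Klainerman--Sobolev, Gr\"onwall), which is a legitimate way to prove this proposition, but it is \emph{not} what the paper does.  The paper instead runs the Lindblad--Rodnianski scheme: it uses only the ordinary $\partial_t$ multiplier, obtains the standard energy identity on $\Sigma_t$ \emph{and} the characteristic flux identity on each $\Cb_{\ub,t}$, then integrates the latter against the weight $(1+|\ub|)^{-1-\kappa}$ to produce a weighted spacetime bound $\iint_{\mathcal D_t}(1+|\ub|)^{-1-\kappa}|\bar\partial\Gamma^k\varphi|^2\lesssim\varepsilon^2+\dots$.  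The nonlinearity is then split according to whether the good derivative $\bar\partial$ sits on the low-order factor (their $N_1$: Klainerman--Sobolev with the extra $|u|^{-1/2}$ gain for $\bar\partial$ makes the time integral converge outright, no Gr\"onwall) or on the top-order factor (their $N_2$: Cauchy--Schwarz against the weighted spacetime term just created, which absorbs it on the left).  The bootstrap then closes with $\varepsilon^2+M^3\varepsilon^3+M^4\varepsilon^4$, giving a constant that is manifestly independent of $u_0$.

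What each approach buys: the paper's argument avoids the conformal multiplier entirely and handles the top-order good-derivative term by absorption into a spacetime norm rather than by decay, which is why no logarithm ever appears and the constant is transparently universal in $u_0$.  Your approach is more classical and also works, but your step (iv) as written is slightly imprecise: you claim a weight $(1+|u|)^{-1}$ integrating to a logarithm ``harmless because of the $\varepsilon$''.  Taken literally that would give $E_k\lesssim\varepsilon\,|u_0|^{C\varepsilon}$, which is \emph{not} universal in $u_0$ and would spoil the later Arzel\`a--Ascoli step where $u_0\to-\infty$ with $\varepsilon\sim\delta^{1/2}$ fixed.  For the genuine null condition the Klainerman argument actually yields an integrable weight (the $\bar\partial$ gain in Klainerman--Sobolev is a full $(1+|\ub|)^{-1/2}$ on top of the generic $(1+|u|)^{-1}(1+|\ub|)^{-1/2}$), so the logarithm is an artifact of your bookkeeping, not of the method; you should sharpen (iii)--(iv) to reflect that, or else switch to the paper's absorption mechanism, which sidesteps the issue.
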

\begin{proof}
First of all, using Killing vector field $\dfrac{\partial}{\partial_t}$ in $\mathcal{D}_t$, we have the following standard energy estimates:
\begin{equation}\label{standard energy estimates in D_t}
\int_{\Sigma_t} |\partial_t \Gamma^k\varphi|^2 + \sum_{i=1}^3|\partial_i\Gamma^k\varphi|^2 \leq \varepsilon^2 + 2\int_{u_0+\delta}^{t}\int_{\Sigma_\tau}|F_k||\partial_t \Gamma^k\varphi|.
\end{equation}
Secondly, let $\Cb_{\ub, t}$ is the part of $\Cb_{\ub}$ in $\mathcal{D}_t$. We can then apply the standard energy estimates in the region bounded by $\Cb_{\ub, t}$, $\Cb_{\delta}$, $\Sigma_t$ and $C^+_{u_0}$, we have
\begin{equation*}
\int_{\Cb_{\ub, t}} |\bar{\partial}\Gamma^k\varphi|^2 \leq \varepsilon^2 + 2\int_{u_0+\delta}^{t}\int_{\Sigma_\tau}|F_k||\partial_t \Gamma^k\varphi|.
\end{equation*}
We multiply the above inequality by $\dfrac{1}{(1+|\ub|)^{1+\kappa}}$ with $\kappa>0$ and integrate over $\ub \in [\delta, t-u_0]$, this yields
\begin{align*}
\doubleint_{\mathcal{D}_t} \frac{|\bar{\partial}\Gamma^k\varphi|^2}{(1+|\ub|)^{1+\kappa}} &=  \int_{\delta}^{t-u_0} \int_{\Cb_{\ub, t}} \frac{|\bar{\partial}\Gamma^k\varphi|^2}{(1+|\ub|)^{1+\kappa}}\\
&\lesssim \varepsilon^2 + \int_{u_0+\delta}^{t}\int_{\Sigma_\tau}|F_k||\partial_t \Gamma^k\varphi|.
\end{align*}
Combined with \eqref{standard energy estimates in D_t}, we obtain the following estimates which will serve as the main tool for the rest of the proof:
\begin{equation}\label{main energy estimates in small data case}
\sum_{|\alpha| \leq 8}\int_{\Sigma_t} |\partial \Gamma^\alpha \varphi|^2 +  \sum_{|\alpha| \leq 8}\doubleint_{\mathcal{D}_t} \frac{|\bar{\partial}\Gamma^\alpha\varphi|^2}{(1+|\ub|)^{1+\kappa}} \lesssim \varepsilon^2 + \sum_{|\alpha| \leq 8, k\leq 8}\int_{u_0+\delta}^{t}\int_{\Sigma_\tau}|F_k||\partial_t \Gamma^\alpha\varphi|.
\end{equation}
We now use a bootstrap argument and we make the following bootstrap assumptions:
\begin{equation*}
\sum_{k=0}^8 \sup_{\tau \in [u_0 + \delta, t]} E_k^{(\text{conf})}(\varphi)(\tau) \leq M \cdot \varepsilon.
\end{equation*}
where $M$ is a large constant.\\

Since $F_k$'s are linear combinations of null forms, the nonlinear terms on the right hand side of \eqref{main energy estimates in small data case} can be bounded by $N_1 + N_2$ where
\begin{equation*}
\begin{split}
N_1 &= \sum_{|\beta|+|\gamma| \leq 8, k\leq 8, |\gamma|\leq 4}\int_{u_0+\delta}^{t}\int_{\Sigma_\tau} |\partial \Gamma^\beta\varphi||\bar{\partial} \Gamma^\gamma\varphi||\partial_t \Gamma^\alpha\varphi|,\\
N_2 &= \sum_{|\beta|+|\gamma| \leq 8, k\leq 8, |\beta|\leq 4}\int_{u_0+\delta}^{t}\int_{\Sigma_\tau} |\partial \Gamma^\beta\varphi||\bar{\partial} \Gamma^\gamma\varphi||\partial_t \Gamma^\alpha\varphi|.
\end{split}
\end{equation*}

The control of $N_1$ is slightly easier, we use Klainerman-Sobolev inequalities for $|\bar{\partial} \Gamma^\gamma\varphi|$. Most importantly, recall that the Klainerman-Sobolev inequalities improve a factor of $\dfrac{1}{|u|^{\frac{1}{2}}}$ for good derivatives. This yields
\begin{align*}
N_1 &\lesssim \sum_{|\beta|+|\gamma| \leq 8, k\leq 8, |\gamma|\leq 4}\int_{u_0+\delta}^{t}\frac{M\cdot \varepsilon}{|u|^\frac{3}{2}}(\int_{\Sigma_\tau} |\partial \Gamma^\beta\varphi||\partial_t \Gamma^\alpha\varphi|)d\tau\\
&\lesssim \sum_{|\beta|+|\gamma| \leq 8, k\leq 8, |\gamma|\leq 4}\int_{u_0+\delta}^{t}\frac{M\cdot \varepsilon}{|u|^\frac{3}{2}}(M\cdot \varepsilon)^2 d\tau\\
&\lesssim M^3 \cdot \varepsilon^3.
\end{align*}

To control $N_2$, we would like to use the second flux term in \eqref{main energy estimates in small data case}. To this end, we first use Cauchy-Schwarz inequality:
\begin{align*}
N_2 & \lesssim\frac{1}{C} \sum_{|\beta|+|\gamma| \leq 8, k\leq 8, |\beta|\leq 4}\int_{u_0+\delta}^{t}\frac{|\bar{\partial}\Gamma^\alpha\varphi|^2}{(1+|\ub|)^{1+\kappa}}\\
 \quad &+C \sum_{|\beta|+|\gamma| \leq 8, k\leq 8, |\beta|\leq 4}\int_{u_0+\delta}^{t}\int_{\Sigma_\tau} (1+|\ub|)^{1+\kappa} |\partial \Gamma^\beta\varphi|^2|\partial_t \Gamma^\alpha\varphi|^2
\end{align*}
We can choose a large $C$ so that the first term can be absorbed by the left hand side of \eqref{main energy estimates in small data case}; for the second term, we can use  Klainerman-Sobolev inequalities to control $|\partial \Gamma^\beta\varphi|^2$ and the energy norms to control the rest, this yields
\begin{align*}
N_2 &\lesssim M^4 \cdot \varepsilon^4.
\end{align*}

We put all these estimates back to \eqref{main energy estimates in small data case}, this yields
\begin{equation*}
\sum_{|\alpha| \leq 8}\int_{\Sigma_t} |\partial \Gamma^\alpha \varphi|^2 \lesssim \varepsilon^2 +M^3 \cdot \varepsilon^3+ M^4 \cdot \varepsilon^4.
\end{equation*}

We can then take a sufficiently small $\varepsilon$ to close the argument.
\end{proof}

The parameter $\varepsilon$ is proportional to $\delta^\frac{1}{2}$. Let $\delta$ be sufficiently small, therefore, we have constructed a solution in Region 3.

\subsection{Existence from Past Null Infinity}

We will let $u_0$ go to $-\infty$ so that the null hypersurface $C_{u_0}$ will approximate the past null infinity.\\

We choose a decreasing sequence $\{u_{0, i}\}$ in such a way that $u_{0, i} \rightarrow -\infty$ and we solve Goursat problem for \eqref{Main Equation} with initial data at $C_{u_{0, i}}$. We emphasize that the choice of the seed data $\psi_0$ is the same for all $u_{0, i}$'s. For each $u_{0, i}$, we obtain a unique smooth solution $\varphi_i$ defined in the region $\mathcal{D}_i$ where
\begin{equation*}
\mathcal{D}_i = \{p \in \mathbb{R}^{3+1}| t(p) \leq -1, u(p) \geq u_{0,i}\}.
\end{equation*}
Moreover, by Sobolev inequalities, for all $k \leq 8$, there exists a constant $C_0$ independent of $i$, so that
\begin{equation*}
\|\varphi_i\|_{C^k(\mathcal{D}_i)} \leq C_0.
\end{equation*}
Thanks to the lemma of Arzela-Ascoli, we could extract a subsequence of solutions, still denoted by $\{\varphi_i\}$, converges uniformly in any compact set of $\{(t,x)\in \mathbb{R}^{3+1}| t \leq -1\}$. We denote the limit to be $\varphi$ and this is a classical solution of \eqref{Main Equation} all the way up to past null infinity.\\

To prove the \textbf{Main Theorem 2} of the paper (it implies the \textbf{Main Theorem 1} in a straightforward way), it remains to show the uniqueness from past null infinity. Suppose $\varphi$ and $\phi$ were two classical solutions for \eqref{Main Equation} with the same scattering data \eqref{Initial-data-from-past-null}, therefore,
\begin{equation}\label{w-v}
\Box (\varphi-\phi)=Q(\nabla \varphi, \nabla \varphi)-Q(\nabla \phi, \nabla \phi)\triangleq F(\nabla\varphi, \nabla\phi),
\end{equation}
with
\begin{equation}\label{initial-data-w-v}
\lim_{u_0\rightarrow-\infty}|u_0(\varphi-\phi)|_{u=u_0}=0.
\end{equation}
For $\tau \leq -1$, let
\begin{equation*}
E(\tau)=\int_{t= \tau}|\partial_t(\varphi-\phi)|^2+\Sigma_{i=1}^3 |\partial_i (\varphi-\phi)|^2,
\end{equation*}
the condition \eqref{initial-data-w-v} reads  as
\begin{equation*}
\lim_{t \rightarrow -\infty}E(t)=0,
\end{equation*}
i.e. $E(t) = o(1)$. According to the standard energy estimates, we have
\begin{equation}\label{energy-method-sigma-t}
E(t)\leq E(2t) + \int_{2t}^{t} (\int_{\Sigma_\tau} |F(\nabla\varphi, \nabla\phi)||\partial_t(\varphi-\phi)|)d\tau.
\end{equation}
We observe that at least factor in the quadratic form $F(\varphi, \phi)$ is a good term, i.e. it is either $\bar{\partial}\varphi$, $\bar{\partial}\phi$ or $\bar{\partial}(\varphi-\phi)$. We use $L^\infty$ bound for this term and it is bounded by $O(|t|^{-\frac{3}{2}})$. Therefore, \eqref{energy-method-sigma-t} becomes
\begin{equation}\label{interation}
\begin{split}
E(t)&\leq E(2t) + C_0 \int_{2t}^{t}|\tau|^{-\frac{3}{2}}E(\tau)d\tau\\
& \leq  E(2t) + C_0 |t|^{-\frac{1}{2}} \sup_{\tau \in [2t,t]}E(\tau)\\
& \leq  E(2t) + C_0^2|t|^{-\frac{1}{2}},
\end{split}
\end{equation}
where $C_0$ is the size of $E(t),$ which is actually small.
We can iterate this estimate to derive (notice that the following $o(1)$ is independent of $k$)
\begin{equation*}
E(t)\leq E(2^k \cdot t) +(\sum_{j=0}^{k-1}2^{-\frac{j}{2}}) C_0^2 |t|^{-\frac{1}{2}}, \quad \forall \,\,\,k \in \mathbb{Z}_{>0}.
\end{equation*}
We then let $k \rightarrow \infty$ and we improve the decay $E(t)= o(1)$ to be
\begin{equation}\label{Decay-E-t-1}
E(t)\leq \frac{C_0^2}{1-2^{-\frac{1}{2}}}|t|^{-\frac{1}{2}}.
\end{equation}

We then substitute \eqref{Decay-E-t-1} into \eqref{interation} to iterate again, we obtain
\begin{equation*}
E(t)\leq E(2t) + \frac{C_0^3}{2(1-2^{-\frac{1}{2}})}|t|^{-1}.
 \end{equation*}
We repeat the above dyadic iteration and we can further improve the decay of $E(t)$ to be
\begin{equation*}
E(t)\leq  \frac{C_0^3}{1-2^{-\frac{1}{2}}}|t|^{-1}.
\end{equation*}

We the repeat the whole procedure several times and we can obtain, for all $k \in \mathbb{Z}_{>0}$,
\begin{equation*}
E(t)\leq  \frac{C_0^{3+k}}{(1-2^{-\frac{1}{2}})\prod_{j=0}^k|1+\frac{j}{2}|}|t|^{-1-\frac{k}{2}}.
\end{equation*}\
Let $k \rightarrow \infty$, it implies $E(t)=0$. Therefore, we have proved the uniqueness.

\subsection{Proof of Main Theorem 3}
It remains to prove the \textbf{Main Theorem 3} and this is an easy consequence of the estimates derived previously. We first recall that $u_0$ is a fixed number. From the proof of Proposition \ref{data on Cb_delta}, we know that, for all $\theta \in \mathbb{S}^2$, $\ub \in [0,\delta]$ and $u\in[u_0,-1]$, we have
\begin{equation*}
\begin{split}
|L\varphi(u,\ub, \theta)- L \varphi(u_0,\ub, \theta)| &\lesssim \delta^{\frac{1}{2}},\\
|\nablaslash\varphi(u,\ub, \theta)- \nablaslash\varphi(u_0,\ub, \theta)| &\lesssim \delta^{\frac{1}{2}},\\
|\Lb\varphi(u,\ub, \theta)- \Lb\varphi(u_0,\ub, \theta)| &\lesssim \delta^{\frac{1}{2}}.
\end{split}
\end{equation*}
Since we take the short pulse data localized in $B_{\delta^{\frac{1}{2}}}(\theta_0)$,  we simply integrate the above inequalities and use the pointwise information of the initial data on $C_{u_0}$, this shows immediately
\begin{equation*}
\begin{split}
 \int_{C_u -  C^o_{u}}|L \varphi|^2 + |\nablaslash \varphi|^2 &\lesssim \delta^{2},\\
 |\int_{C^o_{u}} |L \varphi|^2 + |\nablaslash \varphi|^2 -  \int_{C^o_{u_0}} |L \varphi|^2 + |\nablaslash \varphi|^2 |&\lesssim \delta.
\end{split}
\end{equation*}
Moreover, on $\Cb_{\delta}$, we have seen that almost no energy radiating from it, in terms of estimates, this means $\Eb_1(u,\delta) \lesssim \delta^{\frac{1}{2}}$. This demonstrates the strong focused phenomenon and completes the proof of the \textbf{Main Theorem 3}.

\end{document}